\documentclass[11pt,reqno]{amsart}
\usepackage{amsmath}
\usepackage{authblk} 

\usepackage{algorithm}
\usepackage{algpseudocode}
\usepackage[utf8]{inputenc}

\usepackage{bm}
\usepackage{graphicx}
\usepackage{caption}
\usepackage{subcaption}
\usepackage[margin = 3cm]{geometry}
\usepackage{enumerate} 
\usepackage{parskip}
\usepackage{hyperref}
\usepackage{comment}

 \usepackage{xcolor}
\usepackage{amsmath}
\usepackage{booktabs}
\usepackage{siunitx}
\usepackage{enumitem}
\usepackage{amsmath,amssymb}
\usepackage{algorithm}
\usepackage{algpseudocode}
\usepackage{caption}
\usepackage[foot]{amsaddr}
\allowdisplaybreaks

\def\tr{\textcolor{red}}

\newtheorem{theorem}{Theorem}[section]
\newtheorem{lemma}[theorem]{Lemma}
\newtheorem{corollary}[theorem]{Corollary}
\newtheorem{proposition}[theorem]{Proposition}
\theoremstyle{definition}
\newtheorem{assumption}[theorem]{Assumption}

\newtheorem{example}[theorem]{Example}

\theoremstyle{remark}
\newtheorem{remark}[theorem]{Remark}
\numberwithin{equation}{section}

\numberwithin{equation}{section}
\renewcommand{\theequation}{\arabic{section}.\arabic{equation}}
\newcommand{\red}[1]{\textcolor{red}{#1}}
\newcommand{\blue}[1]{\textcolor{blue}{#1}}

\makeatletter
\renewcommand{\email}[2][]{%
  \ifx\emails\@empty\relax\else{\g@addto@macro\emails{,\space}}\fi%
  \@ifnotempty{#1}{\g@addto@macro\emails{\textrm{(#1)}\space}}%
  \g@addto@macro\emails{#2}%
}
\begin{document}

\makeatother

\makeatletter
\newenvironment{breakablealgorithm}
{%
  \par
  \noindent
  \refstepcounter{algorithm}%
  \hrule height .8pt depth 0pt
  \kern -3pt 
  \renewcommand{\caption}[2][\relax]{%
    \par\noindent
    {\textbf{\ALG@name~\thealgorithm} ##2\par}%
    \ifx\relax##1\relax
      \addcontentsline{loa}{algorithm}
      {\protect\numberline{\thealgorithm}##2}%
    \else
      \addcontentsline{loa}{algorithm}
      {\protect\numberline{\thealgorithm}##1}%
    \fi
    \kern 2pt
    \hrule
    \kern 2pt
  }%
}
{%
  \kern 2pt
  \hrule
  \par
}
\makeatother

\title[Variational Neural Transport] 
{Variational Neural Transport for Aggregation-Diffusion Equations}


\author{Qianyi Li$^{\dagger}$}
\address{$^{\dagger}$Department of Mathematics, Iowa State University, Ames, IA 50011} 
\email{liqiany@iastate.edu}


\author{Hailiang Liu$^\dagger$}
\address{$^\dagger$Department of Mathematics, Iowa State University, Ames, IA 50011}
\email[Corresponding author]{hliu@iastate.edu}

\subjclass{35Q92; 49S05, 68T07}
\keywords{Onsager variational principle; aggregation-diffusion equations; score-based neural transport; neural networks; particle methods.}
\date{\today}
\begin{abstract}
We develop a neural transport framework for nonlinear, nonlocal aggregation-diffusion equations arising in chemotaxis, polymer dynamics, and related applications. Based on the Onsager variational principle, 
we reformulate the governing equation as a constrained optimal control problem, taking the velocity field as the control and evolving the density via the associated continuity equation. The velocity field is parameterized by a neural network trained on the variational objective evaluated along particle trajectories. By coupling the density and generalized score equations into a single system solved via the Euler method, the score function propagates efficiently for loss evaluation.
Furthermore, we establish a rigorous error analysis that decomposes the total error into approximation, optimization, Monte Carlo sampling, and time-discretization components, and prove corresponding error bounds. Numerical experiments across several representative models 
demonstrate the accuracy and effectiveness of the proposed approach. 
\end{abstract}

\maketitle

\medskip

\section{Introduction}

This paper concerns with the aggregation-diffusion equation with no-flux boundary conditions,
\begin{equation}\label{Ori_sys}
    \begin{cases}
         \partial_t \rho = \nabla_x \cdot [\rho \nabla_x(H'(\rho) + V(x)+ W \ast \rho)], \quad & x \in \Omega \subset \mathbb{R}^d, \quad t > 0\\
        \rho(x,0) = \rho_0(x), & x \in \Omega \subset \mathbb{R}^d,\\
        \rho \nabla_x(H'(\rho) + V(x)+ W \ast \rho)\cdot \bm{\mathrm{n}} = 0,
    & x \in \partial \Omega,\quad t>0,
    \end{cases}
\end{equation}
where $\Omega$ is a bounded domain in $\mathbb{R}^d$, $\rho = \rho(t,x)$ denotes the unknown density, $H(\rho)$ is the internal energy density, $V(x)$ is a prescribed confinement potential, and $W(x)$ is an interaction potential, and $\mathbf{n}$ denotes the outward unit normal to the domain boundary $\partial\Omega$. Under suitable assumptions on $H$, $V$ and $W$, the existence of weak solutions to (\ref{Ori_sys}) is well-established; see, for example, \cite{CMV03}.
{Since the equation conserves the total mass \(M=\int_\Omega \rho\,dx\), which need not equal \(1\); a probability density corresponds specifically to the normalized case \(M=1\).}

Aggregation-diffusion equations of the form \eqref{Ori_sys} arise in a wide range of applications. Two standard choices for the internal energy are  $H(\rho)=\rho\log \rho -\rho$ and  $H(\rho)=\rho^m$, corresponding to linear diffusion and porous-medium-type diffusion, respectively. In the absence of the confinement and interaction potentials, the equation reduces to the heat equation or porous medium equation \cite{PME_JL}. The addition of a confinement potential leads to a mass-conserving Fokker--Planck equation
\cite{FPalma991219920902101}, while the inclusion of an interaction potential gives rise to a broad class of nonlinear and nonlocal Fokker--Planck equations.  

Such models arise in diverse applications, including chemotaxis \cite{CHEperthame2007transport}, biological aggregation \cite{AGG1grunbaum_modelling_1994, AGG2kawasaki1978diffusion, AGG3topaz_nonlocal_2006}, and the dynamics of rod-like polymers \cite{ROGdoi1988theory}. Notable examples include the Patlak-Keller-Segel system \cite{PKS1keller_initiation_1970, PKS2Patlak1953311} and the Doi-Onsager equation \cite{ROGdoi1988theory, DOI2Constantin2004365, DOI3liu2005axial, DOI3}. These examples illustrate the broad scope of aggregation-diffusion models and their ability to describe the interplay between diffusion, external confinement, and nonlocal interactions.


The aggregation-diffusion equation possesses several fundamental structural properties. In particular, it conserves the total mass, preserves the nonnegativity of the density, and admits the free-energy functional \cite{CMV03,ENE2,ENE3Villani2003}:
\begin{equation}\label{Free_energy_E}
    E(\rho) = \int_\Omega H(\rho)\,dx + \int_\Omega V(x)\rho \,dx + \frac{1}{2}\int_\Omega \int_\Omega W(x - y)\rho(y)\rho(x)\,dy\,dx,
\end{equation}
which consists of the internal, potential, and interaction energies. 
Along sufficiently regular solutions, the free energy satisfies the dissipation identity
\begin{equation*}
    \frac{d}{dt}E(\rho) =  - \int_\Omega \rho|\nabla_x(H'(\rho) + V(x) + W \ast \rho)|^2 \,dx \leq 0.
\end{equation*}
Thus, the dynamics can be viewed as a dissipative evolution driven by the free-energy functional.
Preserving these intrinsic structural properties at the discrete level is important for accurately capturing the physical behavior and long-time dynamics of the system. Numerous structure-preserving numerical methods have therefore been developed to preserve, either exactly or approximately, properties such as mass conservation, positivity, and energy dissipation; see, for example, \cite{CLASSD1, CLASSD2liu_entropy_2012, CLASSD3liu2014maximum, CLASSD4liu_entropy_2015, CLASSD5liu_free_2014, CLASSD6liu_free_2017, CLASSD8sun_discontinuous_2018, CLASSD9, CLASSD10liu_positive_2020}. Despite their success, most of these approaches rely on mesh-based spatial discretizations. The resulting computational cost and memory requirements can grow rapidly with the spatial dimension, making such methods challenging to apply to high-dimensional problems. 

Neural-network-based methods have recently emerged as a promising alternative for solving high-dimensional PDEs; see, for example,
\cite{Ritz18, DGM18, HDDL18, WCC20, LuNO19, FBPINN23,WAN20, LH24}. Rather than constructing a mesh or explicitly discretizing the spatial domain, these methods approximate the solution or relevant solution components directly through neural-network parameterizations. This mesh-free representation can substantially reduce the dependence of computational complexity on spatial dimension and provides an attractive framework for high-dimensional PDE problems.

However, standard neural PDE solvers generally do not automatically preserve the intrinsic structures of density-evolution equations, including positivity, mass conservation, and free-energy dissipation. This limitation can be particularly significant for long-time simulations, where small violations of these properties may accumulate and lead to qualitatively incorrect dynamics. Consequently, increasing attention has been devoted to neural methods
that incorporate the underlying probabilistic, geometric, or variational
structures of the governing equations into the approximation and training
procedure.

In contrast, we adopt a variational approach based on  the Onsager principle. Rather than directly approximating the density, we formulate the dynamics in terms of a velocity field and its associated  continuity equation. This formulation naturally preserves both mass conservation and density positivity while yielding a principled variational objective to learn the transport velocity via a neural network.

Recently, several complementary neural approaches have been developed to solve Fokker--Planck and related equations. Self-consistency methods characterize transport velocity through a fixed-point relation between the velocity field and its generated probability flow, creating neural objectives to  approximate density evolution \cite{shen22a}. This framework was extended via self-consistent velocity matching, which applies to broad mass-conserving PDEs, including Wasserstein gradient flows, by using iterative optimization and biased gradient estimators to enhance efficiency \cite{LHS23}. Parallel  work reformulates the Fokker--Planck equation as a deterministic probability flow and learns the score function on-the-fly from transported samples, providing a mesh-free representation of high-dimensional density evolution with KL-divergence guarantees \cite{Eric23}. This score-based approach was further generalized to mean-field Fokker--Planck equations with density-dependent interactions, complete rigorous error analysis \cite{lu23}. Our formulation distinctively advances the existing methods in two key aspects: 
\begin{itemize} 
\item Distinction from Self-Consistency Methods  \cite{LHS23, shen22a}: Instead of fixed point relations, our approach  derives directly from the Onsager variational
principle. This yields   a variational objective whose global minimizer recovers the original aggregation--diffusion equation while retaining
its fundamental energy-dissipation structure. 
\item Distinction from  Score-Based Methods \cite{Eric23,lu23}: Our method introduces a generalized score, extending particle-based score evolution beyond standard 
linear diffusion to broader internal energies, such as 
porous-medium-type dynamics.
\end{itemize}

While \cite{zhou2025} formulates Fokker--Planck evolution as a mean-field control problem via score-based normalizing flows,
we extend this formulation to general  aggregation-diffusion equations, featuring  nonlinear diffusion, external confinement, and nonlocal interactions. Rigorously grounded via the Onsager variational principle,  our approach establishes equivalence to the target PDE and guarantees KL-stability 
in the linear diffusion regime. 
Crucially, we present the first unified error analysis for the proposed methodology, explicitly bounding errors from neural approximation, optimization, Monte Carlo sampling, and time discretization.

The main contributions of this work are summarized as follows.
\begin{itemize}
    \item \textbf{Structure-Preserving Variational Formulation:} We establish an Onsager-based  variational formulation using transport velocity that inherently guarantees mass conservation, density positivity, and free-energy dissipation structure via a constrained optimization problem.   
    \item \textbf{Particle-Based Neural Algorithm:} We parameterize the velocity field using a neural network and evaluate loss along trajectories, using a coupled score-type equation, to avoid explicitly solving high-dimensional density equations.

    \item \textbf{Theoretical Foundation \& Stability:} We prove the  equivalence between the variational formulation and the target equation. Furthermore, we establish stability and relative entropy estimates.
    \item \textbf{Unified Error Analysis:} 
   We provide convergence guarantees by decomposing total error into approximation, optimization, time discretization, and Monte Carlo sampling
   components. 
\end{itemize}

The remainder of this paper is organized as follows. Section~2 derives the constrained optimization formulation for aggregation--diffusion equations using Onsager's variational principle. Section~3 establishes its theoretical foundations, including the equivalence result, KL divergence-based stability estimates, and trajectory-wise dynamics of density and generalized scores. Section~4 introduces the particle-based neural algorithm, where a neural network 
parameterizes the velocity field and the variational objective is evaluated along characteristic trajectories. Section~5 provides a rigorous error analysis quantifying neural approximation, optimization, temporal discretization, and Monte Carlo sampling errors. Section~6 demonstrates  numerical experiments across several representative models and validates the error bounds from  Section ~5. Technical proofs are deferred to Appendix~ \ref{AppendixA}. 

\paragraph{\bf Notation and Function Spaces.} Throughout this paper, $\Omega \subset \mathbb{R}^d$ denotes an open domain. For $k \in \mathbb{N}_0$ and $1 \le p \le \infty$, $W^{k,p}(\Omega)$ denotes the standard Sobolev space of functions in $L^p(\Omega)$ whose weak derivatives up to order $k$ belong to $L^p(\Omega)$. For $1 \le p < \infty$, the Sobolev norm is given by
\[
\| f \|_{W^{k,p}(\Omega)} := \left( \sum_{|\alpha| \le k} \| \partial^\alpha f \|_{L^p(\Omega)}^p \right)^{1/p},
\]
where $\alpha = (\alpha_1, \dots, \alpha_d) \in \mathbb{N}_0^d$ is a multi-index with length $|\alpha| = \sum_{i=1}^d \alpha_i$. For $p = \infty$, the norm is defined as $\| f \|_{W^{k,\infty}(\Omega)} := \max_{|\alpha| \le k} \| \partial^\alpha f \|_{L^\infty(\Omega)}$. For vector fields $v \in W^{k,p}(\Omega; \mathbb{R}^d)$, the norm is understood componentwise as $\| v \|_{W^{k,p}(\Omega;\mathbb{R}^d)} := (\sum_{i=1}^d \| v_i \|_{W^{k,p}(\Omega)}^p)^{1/p}$.

\section{Variational Formulation}

We consider the nonlinear aggregation-diffusion equation (\ref{Ori_sys}).  Assume that
\begin{itemize}
    \item $H\in C^2((0,\infty))$ is strictly convex and superlinear;
    \item $V,W\in C^2(\Omega)$, $W(x)=W(-x)$, and both $V$ and $W$ are semiconvex and bounded from below;
    \item the initial data satisfies $\rho_0\ge0$, has finite mass, and finite free energy.
\end{itemize}

Then the free energy
\begin{equation}
E(\rho)=\int_{\Omega} H(\rho)\,dx+\int_{\Omega} V(x)\rho(x)\,dx+\frac12\int_{\Omega}\int_{\Omega}W(x-y)\rho(x)\rho(y)\,dx\,dy
\end{equation}
generates a unique global Wasserstein gradient flow solving \eqref{Ori_sys} (see, e.g., \cite{CMV03,AGS}). Moreover, the solution preserves mass and satisfies the
energy dissipation identity
\begin{equation}
E(\rho(t))+\int_0^t\int_{\Omega}\rho\left|\nabla_x \mu[\rho] \right|^2\,dx\,ds=E(\rho_0),\qquad t\ge0,
\end{equation}
where the notation
\begin{equation}\label{mu_rho}
    \mu[\rho]: = H'(\rho)+V+W*\rho.  
\end{equation}
will also be used in the following analysis.
Moreover, when $t \rightarrow \infty$, the solution approaches a stationary equilibrium. More precisely, we have the following proposition.

\begin{proposition}\label{KL_sta}
Let $\rho$ be a solution to (\ref{Ori_sys}) and $\pi$ be a stationary solution satisfying
\begin{equation*}
\mu[\pi]={\rm const}. 
\end{equation*}
Define the relative free energy  
$$
E(\rho \mid \pi)=\int_{\Omega} (H(\rho)-H(\pi) -H'(\pi)(\rho-\pi))dx +\frac{1}{2}\int_{\Omega \times \Omega} 
W(x-y)(\rho-\pi)(x)(\rho-\pi)(y)dxdy. 
$$
If $W$ is positive semi-definite, then $E(\rho\mid \pi) \geq 0$ and is monotonically non-increasing along solutions of  (\ref{Ori_sys}). 
More precisely,  
$$
\frac{d}{dt}E(\rho\mid\pi)=-\int_{\Omega} \rho |\nabla_x \mu [\rho]|^2 dx.  
$$
Consequently, the relative free energy is a Lyapunov functional for \eqref{Ori_sys}.
\end{proposition}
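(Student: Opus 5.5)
The plan is to show that the relative free energy differs from the free energy $E(\rho)$ only by a constant and a multiple of the (conserved) mass, so that its time derivative equals the dissipation identity already recorded above.

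\textbf{Step 1: algebraic identity.} Expand the interaction part:
\[
\tfrac12\iint W(x-y)(\rho-\pi)(x)(\rho-\pi)(y)\,dx\,dy=\tfrac12\iint W\rho\rho-\int (W*\pi)\rho\,dx+\tfrac12\iint W\pi\pi .
\]
This uses the symmetry $W(x)=W(-x)$, which gives $\iint W(x-y)\rho(x)\pi(y)=\iint W(x-y)\pi(x)\rho(y)$. The stationarity condition reads $H'(\pi)=C-V-W*\pi$ for a constant $C$. Substituting it into $-\int H'(\pi)(\rho-\pi)$ and combining, I expect
\[
E(\rho\mid\pi)=E(\rho)-E(\pi)-C\int_\Omega(\rho-\pi)\,dx .
\]
Since mass is conserved, the last term is constant in time. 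Moreover, if $\pi$ is taken with the same mass as $\rho_0$ (the natural normalization), that term vanishes.

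\textbf{Step 2: dissipation.} Differentiate the identity from Step 1 in time. The terms $E(\pi)$ and $C\int(\rho-\pi)$ are constant, so the derivative of $E(\rho\mid\pi)$ is that of $E(\rho)$. The energy dissipation identity stated before the proposition then gives
\[
\frac{d}{dt}E(\rho\mid\pi)=-\int_\Omega\rho|\nabla_x\mu[\rho]|^2\,dx\le0 .
\]
For smooth solutions one can verify this directly: multiply \eqref{Ori_sys} by $\mu[\rho]$, use $\partial_t\tfrac12\iint W\rho\rho=\int (W*\rho)\partial_t\rho$ (symmetry of $W$ again), and integrate by parts. The boundary term vanishes by the no-flux condition. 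The mass term is handled by $\int\partial_t\rho\,dx=0$, which also follows from no-flux.

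\textbf{Step 3: nonnegativity.} The first integral in $E(\rho\mid\pi)$ is nonnegative pointwise, because strict convexity of $H$ gives $H(\rho)-H(\pi)-H'(\pi)(\rho-\pi)\ge0$. The second integral is $\tfrac12\langle W*(\rho-\pi),\rho-\pi\rangle\ge0$ exactly by the positive semi-definiteness of $W$. Together with Step 2, this makes $E(\rho\mid\pi)$ a Lyapunov functional.

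\textbf{Main obstacle.} The main subtlety is regularity. We need $H'(\pi)$ to be well defined, which requires $\pi>0$, or else a convention on the set $\{\pi=0\}$ where stationarity holds only on the support. We also need enough regularity of $\rho$ to justify the integration by parts. I would state the result for smooth positive solutions, and otherwise appeal to the dissipation identity for the gradient-flow solution, which needs only Step 1.
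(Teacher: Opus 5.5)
Your proposal is correct and follows essentially the same route as the paper: expand the interaction term using the symmetry of $W$, use $\mu[\pi]=C$ to reduce $E(\rho\mid\pi)$ to $E(\rho)-E(\pi)$, then differentiate and integrate by parts with the no-flux condition, with nonnegativity coming from convexity of $H$ and positive semi-definiteness of $W$. Your handling of the mass term is slightly more careful: the paper drops $C\int_\Omega(\rho-\pi)\,dx$ by tacitly taking $\pi$ to have the same mass as $\rho_0$, whereas you observe that this term is constant in time in any case, which already suffices for the dissipation identity.
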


A detailed proof is given in Appendix~\ref{AppendixA}.

\begin{remark}[Long-time convergence]
Under the additional assumptions that $H$ satisfies the displacement
convexity condition, $V$ is uniformly convex with
\[
    \nabla_x^2 V \geq \lambda I
\]
for some $\lambda>0$, and $W$ is convex, the relative free energy
converges exponentially to equilibrium:
\[
    E(\rho\mid\pi)
    \leq e^{-2\lambda t} E(\rho_0\mid\pi).
\]
This quantitative convergence result follows from
\cite[Theorem~2.1]{CMV03}.
It provides theoretical support for the long-time behavior observed
in the numerical experiments of Section~5.
\end{remark}
\begin{remark} The first term in $E(\rho\mid \pi)$
reduces to Kullback–Leibler divergence
$$
D_{KL}(\rho\mid \pi):=\int_{\Omega} \rho \log \frac{\rho}{\pi}dx 
$$
if $H(\rho)=\rho\log \rho -\rho$ (entropy case); and to the Bregman divergence
$$
\int_{\Omega} (\rho^m-\pi^m-m\pi^{m-1}(\rho-\pi))dx  
$$
if $H(\rho)=\rho^m$ (porous-medium case).
\end{remark}


We now turn to deriving a variational formulation of (\ref{Ori_sys}). To do so, we introduce a velocity field $v(x,t)$ and rewrite the evolution equation as the continuity equation
\begin{equation}\label{Model_eq}
        \partial_t \rho + \nabla_x \cdot (\rho v) = 0,
\end{equation}
together with the no-flux boundary condition
\begin{equation}\label{Model_bc}
    (\rho v) \cdot \bm{\mathrm{n}} = 0, \quad \text{for} \quad x \in \partial \Omega.
\end{equation}
The free energy functional $E(\rho)$ remains defined in (\ref{Free_energy_E}), with corresponding chemical potential defined in (\ref{mu_rho}).

Following Onsager's variational principle as shown in \cite{CLX04}, we define the Rayleighian as the sum of the free energy change rate and the kinetic dissipation,
\begin{equation*}
    \begin{aligned}
        R(v,\rho) & = \dot{E} + \frac{1}{2}\int_\Omega \rho |v|^2 \,dx.
    \end{aligned}
\end{equation*}
Using the continuity equation, $\frac{\delta E}{\delta \rho} = \mu[\rho]$, and integration by parts
$$
\dot E= \int_\Omega \frac{\delta E}{\delta \rho} \cdot \partial_t \rho \,dx = \int_{\Omega} \rho v \cdot \nabla_x \mu dx,  
$$
so that 
\begin{equation*}
    \begin{aligned}
        R(v,\rho) = \frac{1}{2}\int_\Omega \rho |v + \nabla_x\mu[\rho]|^2\,dx - \frac{1}{2}\int_\Omega \rho |\nabla_x\mu[\rho]|^2 \,dx.
    \end{aligned}
\end{equation*}
Since the second term is independent of $v$, Onsager's principle states that the physical velocity minimizes 
\begin{equation*}
    \frac{1}{2}\int_\Omega \rho |v + \nabla_x\mu[\rho]|^2\,dx.
\end{equation*}

Integrating this dissipation over the time interval $[0, T]$ and imposing the continuity equation  (\ref{Model_eq}) as a constraint leads naturally to the following optimal control formulation:
\begin{equation}\label{opt0}
    \begin{aligned}
        & \min_{v,\rho} J(\rho, v)\\
        &\text{s.t.} \quad   \partial_t \rho + \nabla_x \cdot (\rho v) = 0,\\
        &\text{BC:} \quad  (\rho v) \cdot \bm{\mathrm{n}} = 0 \quad \text{for} \quad x \in \partial \Omega, \\
        & \text{IC}:\quad \rho(0, \cdot) = \rho_0,
    \end{aligned}
\end{equation}
where
\begin{equation*}
   J(\rho, v)
   :=
   \frac{1}{2}\int_0^T\int_\Omega
   \rho |v + \nabla_x\mu[\rho]|^2\,dx\,dt.
\end{equation*}

Here, the density $\rho$ serves as the state variable, the velocity field $v$ acts as the control, and the continuity equation enforces mass conservation. The resulting optimization problem offers a variational characterization of the aggregation–diffusion dynamics, with the minimizer formally recovering the PDE evolution. 

\section{Theoretical foundations}
\subsection{Equivalence} We first establish the equivalence between the constrained optimization problem (\ref{opt0}) and problem  (\ref{Ori_sys}).

{
\begin{theorem}\label{E_U}
Assume that the nonlinear aggregation--diffusion equation (\ref{Ori_sys}) admits a unique solution $\rho^*$. Then the constrained optimization problem (\ref{opt0}) attains its global  minimum value zero at $(\rho^*,v^*)$, where
\[
v^*=-\nabla_x \bigl(H'(\rho^*)+V+W*\rho^*\bigr).
\]
Conversely, any minimizer $(\rho,v)$ of (\ref{opt0}) determines a solution $\rho$ of (\ref{Ori_sys}). Hence, the constrained optimization problem (\ref{opt0}) is equivalent to the aggregation--diffusion problem (\ref{Ori_sys}). Moreover, the density $\rho^*$ is uniquely determined, and the optimal velocity $v^*$ is uniquely determined in $L^2_{\rho^*}((0, T)\times \Omega)$.
\end{theorem}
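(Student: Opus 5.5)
The plan is to use only two facts: $J\ge 0$, and $J=0$ forces $v=-\nabla_x\mu[\rho]$ $\rho$-almost everywhere. Both directions of the equivalence, and the uniqueness claims, will follow from these.

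\textbf{Step 1 (lower bound).} For every admissible pair $(\rho,v)$ the integrand $\rho|v+\nabla_x\mu[\rho]|^2$ is nonnegative, because $\rho\ge 0$ along the continuity equation with $\rho_0\ge0$. Hence $J(\rho,v)\ge 0$, and the infimum of (\ref{opt0}) is at least zero.

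\textbf{Step 2 (attainment at $(\rho^*,v^*)$).} Define $v^*=-\nabla_x\mu[\rho^*]$. Then $\rho^*v^*=-\rho^*\nabla_x\mu[\rho^*]$, so (\ref{Ori_sys}) is exactly the continuity equation (\ref{Model_eq}) with this velocity. The no-flux condition of (\ref{Ori_sys}) is exactly (\ref{Model_bc}), and the initial data agree. So $(\rho^*,v^*)$ is admissible, and since $v^*+\nabla_x\mu[\rho^*]=0$ we get $J(\rho^*,v^*)=0$. With Step 1, the minimum value is zero and it is attained.

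\textbf{Step 3 (converse).} Let $(\rho,v)$ be any minimizer. Since the minimum value is zero, $J(\rho,v)=0$, so $\rho|v+\nabla_x\mu[\rho]|^2=0$ a.e. in $(0,T)\times\Omega$. Therefore $\rho v=-\rho\nabla_x\mu[\rho]$ a.e. This identity is all we need, because the flux is the only place $v$ enters the constraint. Substituting it into the continuity equation, the boundary condition and the initial condition shows that $\rho$ is a (weak) solution of (\ref{Ori_sys}). By the assumed uniqueness, $\rho=\rho^*$.

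\textbf{Step 4 (uniqueness of the velocity).} Given $\rho=\rho^*$, the identity from Step 3 reads
$$
\int_0^T\!\!\int_\Omega \rho^*|v-v^*|^2\,dx\,dt=0,
$$
so $v=v^*$ in $L^2_{\rho^*}((0,T)\times\Omega)$. Off the support of $\rho^*$ the velocity is not determined, which is why uniqueness holds only in this weighted space.

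\textbf{Main obstacle.} The delicate point is the regularity setting. We must make sure $\mu[\rho]$ has a gradient in a sense for which $J$ makes sense, and that the weak formulation of the continuity equation with flux $\rho v$ matches the weak formulation of (\ref{Ori_sys}). To handle this, I would state both problems in distributional form tested against $\varphi\in C^1([0,T]\times\overline\Omega)$, with the boundary term encoding the no-flux condition. The a.e. identity of fluxes then transfers directly between the two formulations. I would also restrict admissible pairs to those with $J<\infty$, so that $\rho\nabla_x\mu[\rho]\in L^1$ is guaranteed by Cauchy–Schwarz once $\int\rho|v|^2<\infty$.
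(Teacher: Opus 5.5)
Your proposal is correct and follows the paper's proof essentially step for step. Nonnegativity of $J$ together with $J(\rho^*,v^*)=0$ gives attainment of the minimum value zero; conversely, $J=0$ forces the flux identity $\rho v=-\rho\nabla_x\mu[\rho]$, and the assumed uniqueness of the solution of (\ref{Ori_sys}) gives $\rho=\rho^*$. The only difference is cosmetic: you state velocity uniqueness directly as $\int_0^T\int_\Omega\rho^*|v-v^*|^2\,dx\,dt=0$, which matches the theorem's $L^2_{\rho^*}$ formulation, whereas the paper phrases it as uniqueness of the momentum $m=\rho v$, an equivalent statement.
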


\begin{proof}
The proof proceeds in two steps.

\noindent\textbf{Step 1.} 
Let $\rho^*$ be a solution to (\ref{Ori_sys}). Upon rewriting, we have
\begin{equation*}
    \partial_t \rho^*+\nabla_x\cdot(\rho^*v^*)=0,
\end{equation*}
where
\begin{equation*}
    v^*
    =-\nabla_x H'(\rho^*)-\nabla_x V-\nabla_x(W\ast\rho^*)
    =-\nabla_x\mu[\rho^*].
\end{equation*}
Also, the no-flux boundary condition in (\ref{Ori_sys}) reduces to
\begin{equation*}
    (\rho^*v^*)\cdot\bm{\mathrm n}=0.
\end{equation*}
Thus, $(\rho^*,v^*)$ satisfies the constraints in (\ref{opt0}). Since the objective functional is nonnegative and
\[
J(\rho^*,v^*)=0,
\]
$(\rho^*,v^*)$ is a global minimizer.

\noindent\textbf{Step 2.}
Let $(\rho,v)$ be any minimizer of (\ref{opt0}). Since the global minimum value of $J$ is zero, we have $
J(\rho,v)=0$, which implies 
\[
\rho\left|v+\nabla_x\mu[\rho]\right|^2=0
\qquad\text{a.e. in }(0,T)\times\Omega.
\]
Define the momentum vector field
$m:=\rho v$. The pointwise identity above yields:
\[
m=-\rho\nabla_x\mu[\rho]
\qquad\text{a.e. in }(0,T)\times\Omega,
\]
where both sides naturally vanish on the set $\{(t, x): \quad \rho(t, x)=0\}$. Substituting this  momentum identity into the continuity equation $\partial_t +\nabla_x\cdot m=0$ gives
\[
\partial_t\rho
=
\nabla_x\cdot
\left(
\rho\nabla_x
\bigl(
H'(\rho)+V+W*\rho
\bigr)
\right),
\]
complemented by the no-flux boundary condition. Thus, $\rho$ solves (\ref{Ori_sys}). By the assumed uniqueness of the solution to (\ref{Ori_sys}), we obtain $
\rho=\rho^*.$ Consequently,
\[
m
=-\rho^*\nabla_x\mu[\rho^*]
=\rho^*v^*
=:m^*.
\]
Therefore, the density and momentum are uniquely determined. 
\end{proof}
} 

\begin{corollary}[Energy dissipation]
\label{cor:energy_dissipation}
Let $(\rho^*,v^*)$ be the global minimizer characterized in
Theorem~\ref{E_U}. Then
\[
\frac{d}{dt}E(\rho^*)
=
-\int_\Omega
\rho^*|\nabla_x\mu[\rho^*]|^2\,dx
=
-\int_\Omega
\rho^*|v^*|^2\,dx
\le 0.
\]
Hence, the global minimizer recovers the free-energy dissipation
structure of the aggregation--diffusion equation.
\end{corollary}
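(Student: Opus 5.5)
The plan is to compute $\frac{d}{dt}E(\rho^*)$ directly, using the continuity equation satisfied by the minimizer, and then identify the result with $v^*$ through Theorem~\ref{E_U}. By Theorem~\ref{E_U}, the global minimizer satisfies $\partial_t\rho^*+\nabla_x\cdot(\rho^*v^*)=0$ with $(\rho^*v^*)\cdot\bm{\mathrm n}=0$ on $\partial\Omega$ and $v^*=-\nabla_x\mu[\rho^*]$. Equivalently, $\rho^*$ solves \eqref{Ori_sys}, so the identity is the dissipation identity already recorded in Section~2. I would nevertheless derive it from the first variation, working with sufficiently regular solutions as the paper does.

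\textbf{Step 1: first variation of $E$.} Differentiate each of the three terms of \eqref{Free_energy_E} in time:
\begin{equation*}
\frac{d}{dt}E(\rho^*)=\int_\Omega \Bigl(H'(\rho^*)+V+W*\rho^*\Bigr)\partial_t\rho^*\,dx=\int_\Omega \mu[\rho^*]\,\partial_t\rho^*\,dx .
\end{equation*}
For the interaction term, differentiating the double integral produces two contributions, one in $x$ and one in $y$. The symmetry $W(x)=W(-x)$ makes them equal, and together they cancel the factor $\tfrac12$.

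\textbf{Step 2: integration by parts.} Substitute $\partial_t\rho^*=-\nabla_x\cdot(\rho^*v^*)$ and integrate by parts:
\begin{equation*}
\frac{d}{dt}E(\rho^*)=-\int_{\partial\Omega}\mu[\rho^*]\,(\rho^*v^*)\cdot\bm{\mathrm n}\,dS+\int_\Omega \rho^*v^*\cdot\nabla_x\mu[\rho^*]\,dx .
\end{equation*}
The boundary term vanishes by the no-flux condition. Inserting $v^*=-\nabla_x\mu[\rho^*]$ in the remaining integral gives both displayed equalities:
\begin{equation*}
\int_\Omega \rho^*v^*\cdot\nabla_x\mu[\rho^*]\,dx=-\int_\Omega\rho^*|\nabla_x\mu[\rho^*]|^2\,dx=-\int_\Omega\rho^*|v^*|^2\,dx .
\end{equation*}
Nonpositivity follows from $\rho^*\ge0$.

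\textbf{Main obstacle: justification.} The main difficulty is justifying the differentiation under the integral and the integration by parts when $\rho^*$ is only a weak solution. For instance, $H'(\rho^*)$ can be singular where $\rho^*=0$ in the entropy case. I would handle this as the paper does: assume enough regularity for the formal computation to hold. Alternatively, one can invoke the energy dissipation identity of the Wasserstein gradient flow quoted in Section~2 (from \cite{CMV03,AGS}) and differentiate in $t$, which holds for a.e.\ $t$. The identification $\rho^*|\nabla_x\mu[\rho^*]|^2=\rho^*|v^*|^2$ holds a.e., which is exactly the uniqueness of $v^*$ in $L^2_{\rho^*}$ stated in Theorem~\ref{E_U}.
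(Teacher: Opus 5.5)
Your proof is correct and is essentially the paper's argument. Both rest on $\frac{d}{dt}E(\rho^*)=\int_\Omega\rho^*v^*\cdot\nabla_x\mu[\rho^*]\,dx$ (first variation, continuity equation, and integration by parts with the no-flux condition, as in Section~2) together with $v^*=-\nabla_x\mu[\rho^*]$ $\rho^*$-a.e.; the only cosmetic difference is that the paper expands $\int_\Omega\rho v\cdot\nabla_x\mu\,dx$ by polarization as $\tfrac12\int_\Omega\rho|v+\nabla_x\mu|^2\,dx-\tfrac12\int_\Omega\rho|v|^2\,dx-\tfrac12\int_\Omega\rho|\nabla_x\mu|^2\,dx$ and kills the first term using $J=0$, whereas you substitute $v^*=-\nabla_x\mu[\rho^*]$ directly.
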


\begin{proof}
The free-energy evolution can be written as
\[
\frac{d}{dt}E(\rho)
=
\frac{1}{2}\int_\Omega
\rho|v+\nabla_x\mu[\rho]|^2\,dx
-\frac{1}{2}\int_\Omega
\rho|v|^2\,dx
-\frac{1}{2}\int_\Omega
\rho|\nabla_x\mu[\rho]|^2\,dx.
\]
By Theorem~\ref{E_U},
\[
v^*+\nabla_x\mu[\rho^*]=0
\qquad \rho^*\text{-a.e.},
\]
so that
\[
\frac{d}{dt}E(\rho^*)
=
-\frac{1}{2}\int_\Omega
\rho^*|v^*|^2\,dx
-\frac{1}{2}\int_\Omega
\rho^*|\nabla_x\mu[\rho^*]|^2\,dx.
\]
Since $v^*=-\nabla_x\mu[\rho^*]$, the conclusion follows.
\end{proof}

\subsection{Control of the KL Divergence}
   
    We next provide a theoretical justification for the variational formulation (\ref{opt0}) by relating the error in the velocity field to the discrepancy between the corresponding density distributions. Throughout this paper, we consider two choices of the internal energy 
    $$H(\rho)=\rho\log\rho - \rho  \; \text{and}\;  \rho^{m}.
    $$
    For the logarithmic entropy $H(\rho) = \rho\log \rho - \rho$, the natural measure of  discrepancy, is the Kullback--Leibler(KL) divergence:
    \begin{equation}
        D_{KL}(\rho\|\rho^*) = \int_\Omega \rho(t,x)\log\frac{\rho(t,x)}{\rho^*(t,x)}\,dx,
    \label{eq:DKL}
    \end{equation}
    where $\rho^*$ denotes the reference solution.


    
    

\begin{lemma}\label{L2_KL}
    If $0 < m^* \leq  \rho_1,\rho_2 \leq M^*$, and $\int_\Omega \rho_1\,dx = \int_\Omega \rho_2\,dx$, there exists a constant $C_*$, depending on $m^*$ and $M^*$, such that
    \begin{equation*}
        \|\rho_1 - \rho_2\|_{L^2(\Omega)}^2 \leq  C_* D_{KL}(\rho_1\|\rho_2).
    \end{equation*}
\end{lemma}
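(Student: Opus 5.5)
The plan is to use equal masses to rewrite the KL divergence as the integral of a nonnegative pointwise Bregman-type quantity, and then bound that integrand from below by a quadratic via Taylor's theorem. Since $\int_\Omega \rho_1\,dx=\int_\Omega\rho_2\,dx$, we may add $\int_\Omega(\rho_2-\rho_1)\,dx=0$ to the divergence:
\[
D_{KL}(\rho_1\|\rho_2)=\int_\Omega \Bigl(\rho_1\log\frac{\rho_1}{\rho_2}-\rho_1+\rho_2\Bigr)\,dx
=\int_\Omega \rho_2\,\phi\Bigl(\frac{\rho_1}{\rho_2}\Bigr)\,dx,
\qquad \phi(s):=s\log s-s+1 .
\]
Equivalently, the integrand is the Bregman divergence of $h(r)=r\log r - r$:
\[
h(\rho_1)-h(\rho_2)-h'(\rho_2)(\rho_1-\rho_2).
\]

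Next, apply Taylor's theorem with Lagrange remainder to $h$ at each point $x$. Since $h''(r)=1/r$, we get
\[
h(\rho_1)-h(\rho_2)-h'(\rho_2)(\rho_1-\rho_2)=\frac{1}{2\xi(x)}\,(\rho_1-\rho_2)^2
\]
for some $\xi(x)$ between $\rho_1(x)$ and $\rho_2(x)$. Both densities lie in $[m^*,M^*]$, so $\xi(x)\le M^*$. The integrand is therefore bounded below by $\frac{1}{2M^*}(\rho_1-\rho_2)^2$. Integrating over $\Omega$ gives
\[
\|\rho_1-\rho_2\|_{L^2(\Omega)}^2\le 2M^*\,D_{KL}(\rho_1\|\rho_2),
\]
so $C_*=2M^*$ works. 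Measurability of $\xi$ is not an issue, because only the pointwise inequality is integrated.

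The only step that needs care is the first one. Pointwise, $\rho_1\log(\rho_1/\rho_2)$ can be negative, so mass equality is what allows replacing it by the nonnegative integrand $\rho_2\phi(\rho_1/\rho_2)$; this is where that hypothesis is used. The lower bound $m^*$ is only needed to keep the logarithms finite and the Taylor expansion valid on $[m^*,M^*]$. The resulting constant then depends only on $M^*$, which is consistent with the statement's dependence on $m^*$ and $M^*$.
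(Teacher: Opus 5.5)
Your proof is correct, but it obtains the pointwise quadratic bound differently from the paper.

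Both arguments use the equal-mass hypothesis the same way: to write $D_{KL}(\rho_1\|\rho_2)=\int_\Omega \rho_2\,\phi(\rho_1/\rho_2)\,dx$ with the nonnegative integrand $\phi(r)=r\log r-r+1$.

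The paper then passes to the ratio $r=\rho_1/\rho_2$. It first uses $\rho_2^2\le M^*\rho_2$, and then bounds $g(r)=(r-1)^2/\phi(r)$ on the compact interval $I=[m^*/M^*,M^*/m^*]$ by continuity, with $g(1)=2$. This yields a non-explicit constant of the form $M^*\sup_I g$. Since $g(r)$ grows like $r/\log r$ as $r\to\infty$, that constant genuinely depends on the ratio $M^*/m^*$.

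Your Taylor expansion of $h(r)=r\log r-r$ works directly in the density variables and keeps $\rho_2$ and $r$ coupled. It amounts to the sharper pointwise bound $g(r)\le 2\max\{r,1\}$, and hence to the explicit constant $C_*=2M^*$. This constant is independent of $m^*$ and never worse than the paper's, because $1\in I$ gives $\sup_I g\ge g(1)=2$. It also makes clear that the lower bound $m^*$ only keeps the logarithms finite.

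The paper's compactness argument is softer and needs no second-derivative computation. Your route extends immediately to any Bregman divergence with $H''\ge\lambda>0$ on $[m^*,M^*]$, giving $\|\rho_1-\rho_2\|_{L^2(\Omega)}^2\le \frac{2}{\lambda}\int_\Omega\bigl(H(\rho_1)-H(\rho_2)-H'(\rho_2)(\rho_1-\rho_2)\bigr)\,dx$. That would supply the analogue of this lemma for the porous-medium energy $H(\rho)=\rho^m$ mentioned in the paper's remark.
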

The following lemma characterizes the time evolution of the KL divergence.
\begin{lemma}\label{Lemma_KL_t}
    Let $\rho^*$ be the solutions to (\ref{Ori_sys}),
    and $\rho$ be the solution to the model equations (\ref{Model_eq}) together with its boundary condition (\ref{Model_bc}). Then the time derivative of the KL divergence (\ref{eq:DKL}) is given by:
    \begin{equation}\label{DKL_t}
\begin{aligned}
    \frac{d}{dt}D_{KL}(\rho \| \rho^*)  & = \int_\Omega  \rho\nabla_x\log \frac{\rho}{\rho^*} \cdot  (v - v^*) \,dx,
\end{aligned}
\end{equation}
where $v^* = - \nabla_x \log \rho^* - \nabla_x V - \nabla_x( W \ast \rho^*)$,
\end{lemma}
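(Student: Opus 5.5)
Differentiate the KL divergence directly in time, using the two continuity equations: $\partial_t\rho+\nabla_x\cdot(\rho v)=0$ for the model density, and $\partial_t\rho^*+\nabla_x\cdot(\rho^*v^*)=0$ for the reference solution. For the latter, in the entropy case $H(\rho)=\rho\log\rho-\rho$ we have $H'(\rho)=\log\rho$, so $\mu[\rho^*]=\log\rho^*+V+W*\rho^*$ and $v^*=-\nabla_x\mu[\rho^*]$. Both fluxes satisfy no-flux boundary conditions, by (\ref{Model_bc}) and by Step 1 of the proof of Theorem~\ref{E_U}. Assume enough regularity and positivity ($\rho,\rho^*>0$, smooth up to the boundary) to differentiate under the integral and to integrate by parts.

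\textbf{Step 1.} Differentiating (\ref{eq:DKL}) gives
\begin{equation*}
\frac{d}{dt}D_{KL}(\rho\|\rho^*)=\int_\Omega \partial_t\rho\Bigl(\log\frac{\rho}{\rho^*}+1\Bigr)dx-\int_\Omega \frac{\rho}{\rho^*}\,\partial_t\rho^*\,dx .
\end{equation*}
The constant $1$ contributes $\frac{d}{dt}\int_\Omega\rho\,dx=0$ by mass conservation, which itself follows from the divergence form and the no-flux condition.

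\textbf{Step 2.} Substitute the continuity equations and integrate by parts; the boundary terms vanish because $\rho v\cdot\mathbf n=\rho^*v^*\cdot\mathbf n=0$. The first term becomes
\begin{equation*}
\int_\Omega \rho v\cdot\nabla_x\log\frac{\rho}{\rho^*}\,dx .
\end{equation*}
The second term becomes
\begin{equation*}
-\int_\Omega \rho^* v^*\cdot\nabla_x\frac{\rho}{\rho^*}\,dx .
\end{equation*}
Now use the identity $\rho^*\nabla_x(\rho/\rho^*)=\rho\,\nabla_x\log(\rho/\rho^*)$, which holds where $\rho,\rho^*>0$. The second term then equals
\begin{equation*}
-\int_\Omega\rho\,\nabla_x\log\frac{\rho}{\rho^*}\cdot v^*\,dx .
\end{equation*}
Adding the two terms yields (\ref{DKL_t}).

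The computation is routine. The only delicate point is justifying the manipulations: differentiating under the integral, dividing by $\rho^*$, and discarding boundary terms. I would handle this by assuming strict positivity and sufficient smoothness of both densities on $[0,T]\times\bar\Omega$, as is implicit in the statement. Alternatively, one could argue for regularized densities $\rho+\epsilon$ and pass to the limit, but I would state the result under the smoothness and positivity hypothesis.
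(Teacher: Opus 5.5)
Your proof is correct and follows the paper's argument essentially line for line. You differentiate, substitute both continuity equations, integrate by parts using the two no-flux conditions, and rewrite $\rho^*\nabla_x(\rho/\rho^*)$ as $\rho\,\nabla_x\log(\rho/\rho^*)$. You are slightly more careful than the paper, which silently drops the $\int_\Omega \partial_t\rho\,dx$ contribution that you correctly eliminate via mass conservation.
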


Combining these lemmas, we obtain the following estimate on the KL divergence:

\begin{theorem}\label{KL_control}
    (Time evolution of KL divergence) Let $\rho^*$ be the solution to (\ref{Ori_sys}) with initial density 
    $$
    0<m_0\leq \rho_0 \leq M_0. 
    $$
    Let $v$ be Lipchitz in $x$ and continuous in $t$,  and $\rho$ satisfies  equation (\ref{Model_eq}) subject to  boundary condition (\ref{Model_bc}) and same initial data $\rho_0$, then
    \begin{equation*}
        \frac{d}{dt}D_{KL}(\rho\|\rho^*)\leq \int_\Omega \rho| v + \nabla_x \log \rho + \nabla_x V + \nabla_x( W \ast \rho)|^2 \,dx +  C D_{KL}(\rho\|\rho^*).
    \end{equation*}
    Moreover, 
    \begin{equation}\label{Up_KL}
D_{KL}(\rho\|\rho^*)
\le
e^{CT}
\int_0^T
\left(
\int_\Omega \rho| v + \nabla_x \log \rho + \nabla_x V + \nabla_x( W \ast \rho)|^2 \,dx
\right) dt.
\end{equation}
where $C$ depends on $m_0, M_0, T, \|\nabla_x \cdot v\|_{L^\infty([0, T]\times \Omega)}$ and $\|\nabla_x W\|_{L^1(\Omega)}$.
\end{theorem}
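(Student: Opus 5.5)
Starting from Lemma~\ref{Lemma_KL_t}, I would split $v-v^*$ into a residual and an interaction mismatch. Write
\[
r := v + \nabla_x \log \rho + \nabla_x V + \nabla_x(W\ast\rho), \qquad
v - v^* = r - \nabla_x\log\frac{\rho}{\rho^*} - \nabla_x W\ast(\rho-\rho^*).
\]
Substituting into \eqref{DKL_t} gives
\[
\frac{d}{dt}D_{KL}(\rho\|\rho^*) = \int_\Omega \rho\nabla_x\log\tfrac{\rho}{\rho^*}\cdot r\,dx - \int_\Omega \rho\Bigl|\nabla_x\log\tfrac{\rho}{\rho^*}\Bigr|^2dx - \int_\Omega \rho\nabla_x\log\tfrac{\rho}{\rho^*}\cdot \nabla_x W\ast(\rho-\rho^*)\,dx.
\]
Young's inequality, $ab\le \tfrac14 a^2+b^2$ applied twice, bounds the first term by $\tfrac14 I+\int\rho|r|^2$ and the third by $\tfrac14 I + \int\rho|\nabla_x W\ast(\rho-\rho^*)|^2$, where $I=\int\rho|\nabla_x\log(\rho/\rho^*)|^2$ is the Fisher-type term. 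The negative $-I$ absorbs both quarters, leaving
\[
\frac{d}{dt}D_{KL}\le \int_\Omega\rho|r|^2dx + \int_\Omega \rho\,|\nabla_x W\ast(\rho-\rho^*)|^2dx .
\]

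The remaining task is to bound the interaction term by $C D_{KL}$. By Young's convolution inequality,
\[
\|\nabla_x W\ast(\rho-\rho^*)\|_{L^2}\le \|\nabla_x W\|_{L^1}\|\rho-\rho^*\|_{L^2}.
\]
Combining this with $\rho\le M$ and Lemma~\ref{L2_KL} gives the bound $M\|\nabla_x W\|_{L^1}^2 C_* D_{KL}$. This requires two-sided bounds on $\rho$ and $\rho^*$ on $[0,T]$, and mass equality, which holds because both equations conserve mass under the no-flux conditions and share the initial datum $\rho_0$.

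The main obstacle is justifying these uniform bounds. For $\rho$, I would use characteristics. Along the flow $\dot X=v(X,t)$, which is well-defined and stays in $\bar\Omega$ since $v$ is Lipschitz and the flux has no normal component, we have $\frac{d}{dt}\rho(X,t) = -\rho\,\nabla_x\cdot v$. Hence
\[
m_0 e^{-T\|\nabla_x\cdot v\|_\infty}\le \rho\le M_0e^{T\|\nabla_x\cdot v\|_\infty}.
\]
For $\rho^*$, I would invoke a maximum/minimum principle for \eqref{Ori_sys} with linear diffusion. Writing it as $\partial_t\rho^*=\Delta\rho^*+\nabla\cdot(\rho^*\nabla(V+W\ast\rho^*))$, comparison gives bounds of the form $m_0e^{-cT}$ and $M_0e^{cT}$, with $c$ depending on $\|\Delta V\|_\infty$ and $\|\Delta W\ast\rho^*\|_\infty$, which I would take as part of the standing regularity assumptions. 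All these constants feed into $C$.

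Finally, Gronwall's inequality with $D_{KL}(0)=0$ gives
\[
D_{KL}(t)\le \int_0^t e^{C(t-s)}\int_\Omega\rho|r|^2\,dx\,ds\le e^{CT}\int_0^T\int_\Omega\rho|r|^2\,dx\,dt,
\]
which is \eqref{Up_KL}.
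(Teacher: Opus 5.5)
\textbf{Gap: the two-sided bound on $\rho^*$.} Everything else in your proposal is the paper's proof: the same splitting of $v-v^*$, Young's inequality absorbing the Fisher term (your weights $\tfrac14$ versus the paper's $\tfrac12$ are immaterial), Young's convolution inequality combined with Lemma~\ref{L2_KL}, characteristics for $\rho$, and Gr\"onwall with $D_{KL}(0)=0$.

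The comparison argument you sketch, with $c$ governed by $\|\Delta V\|_\infty$ and $\|\Delta W\ast\rho^*\|_\infty$, only controls interior extrema. Write $\Phi:=V+W\ast\rho^*$. At an interior minimum you do get $\partial_t\rho^*\ge\rho^*\Delta\Phi$. But the minimum may sit on $\partial\Omega$, where the no-flux condition reads $\partial_n\rho^*=-\rho^*\,\partial_n\Phi$. This has no favourable sign, so the extremum can drift at a rate that $\|\Delta\Phi\|_\infty$ does not see.

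A concrete counterexample, within the paper's assumptions: $d=1$, $\Omega=(0,1)$, $W=0$, $V(x)=Ax$. Then $\Delta V=0$, so your bound would give $m_0\le\rho^*\le M_0$ for all $t$. Yet $\rho^*(t)\to\pi=Ae^{-Ax}/(1-e^{-A})$, whose range is far wider than $[m_0,M_0]$ when $\rho_0$ is nearly constant and $A$ is large.

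\textbf{How to fix it.} The paper argues qualitatively. The strong maximum principle gives $\rho^*>0$, and continuity on the compact set $[0,T]\times\bar\Omega$ gives $m_*\le\rho^*\le M_*$. Positivity up to $\partial\Omega$ needs Hopf's lemma: a boundary zero would force $\partial_n\rho^*<0$, contradicting $\partial_n\rho^*=-\rho^*\partial_n\Phi=0$.

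If you want explicit constants, substitute $u:=\rho^*e^{\Phi}$. Since $\nabla\rho^*+\rho^*\nabla\Phi=e^{-\Phi}\nabla u$, the equation becomes $\partial_t u=\Delta u-\nabla\Phi\cdot\nabla u+u\,\partial_t\Phi$ with the homogeneous Neumann condition $\partial_n u=0$. The maximum principle then gives
\[
m_0\,e^{-\operatorname{osc}\Phi-\|\partial_t\Phi\|_\infty T}\le\rho^*\le M_0\,e^{\operatorname{osc}\Phi+\|\partial_t\Phi\|_\infty T},
\]
where $\operatorname{osc}\Phi$ is the oscillation of $\Phi$ over $[0,T]\times\bar\Omega$.

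Either way, the bounds depend on $V$ and $W\ast\rho^*$ through their oscillation (and the solution itself), not through their Laplacians. With that replacement feeding into $C$, the rest of your argument goes through unchanged.
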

The proof details are provided in Appendix~\ref{AppendixA}.

\begin{remark}[Regularization of non-negative initial data] If the initial density $\rho_0$ vanishes at some points, the strict positivity assumption
in Theorem~\ref{KL_control} is not satisfied. In the numerical
implementation, we therefore regularize the initial density by adding a small
positive constant $\delta>0$ and renormalizing to preserve the
total mass. Specifically, we set
\[
    \rho_0^\delta(x)
    :=
    \frac{\rho_0(x)+\delta}
    {\displaystyle\int_\Omega (\rho_0(y)+\delta)\,dy}
    \int_\Omega \rho_0(y)\,dy.
\]
Then $\rho_0^\delta>0$ and 
$
\int_{\Omega} \rho_0^\delta(x)dx=\int_\Omega \rho_0(x)dx. 
$
In the numerical experiments, $\delta$ is chosen sufficiently small
relative to the discretization parameters so that the regularization
error is negligible compared with the discretization error.
\end{remark}

\begin{remark}[Porous-medium case] 
For the porous-medium case \(H(\rho)=\rho^m\), the KL-based argument above  does not directly apply to this case. A stability estimate in this setting would require an appropriate discrepancy measure and a separate analysis.  
\end{remark}

To learn the velocity field $v$,  we need access to the quantities $\nabla_x \log \rho$ and $W*\rho$, which encode the local score information and the nonlocal interaction effects, respectively. We next establish how these quantities can be propagated along particle trajectories, enabling their evaluation through the probability flow.

\subsection{Propagation along the particle path}

For general $H(\rho)$, we consider a probability flow generated by the ODE
\begin{equation}\label{ODE_x}
    \dot{x}_t = v(t,x_t),
\end{equation}
with random initialization $x_0\sim\rho_0$. The density
$\rho(t,\cdot)$ is the pushforward of the initial density $\rho_0$
under the flow map generated by \eqref{ODE_x}.
The continuity equation can be written as
\begin{equation}\label{material_rho}
    \partial_t\rho
    +
    v\cdot\nabla_x\rho
    =
    -\rho\nabla_x\cdot v.
\end{equation}
Therefore,
\begin{equation*}
\begin{aligned}
\partial_t H'(\rho)
+
v\cdot\nabla_x H'(\rho)
=
-\rho H''(\rho)\nabla_x\cdot v.
\end{aligned}
\end{equation*}
Set $
q:=\nabla_x H'(\rho)$, 
and take the gradient with respect to $x$ to  obtain
\begin{equation}\label{general_H}
\begin{aligned}
\partial_t q +
(\nabla_x v)^\top q 
+ v\cdot\nabla_x q =-\nabla_x\left(
\rho H''(\rho)\nabla_x\cdot v
\right).
\end{aligned}
\end{equation}
The right-hand side of \eqref{general_H} can be rewritten as 
$$
- \rho H''(\rho)\nabla_x(\nabla_x\cdot v)-
(\nabla_x\cdot v)
\left(
1+\frac{\rho H'''(\rho)}{H''(\rho)}
\right)
\nabla_x H'(\rho). 
$$
Hence along the particle trajectory $x_t$, we obtain 
\begin{equation}\label{general_rho}
    \dot \rho_t= -\rho_t \nabla_x\cdot v_t,
\end{equation}
and 
\begin{equation}\label{general_q}
    \dot q_t =-(\nabla_x v_t)^\top q_t
-\rho_t H''(\rho_t)\nabla_x(\nabla_x\cdot v_t) -
(\nabla_x\cdot v_t)
\left(
1+\frac{\rho_t H'''(\rho_t)}{H''(\rho_t)}
\right)q_t,
\end{equation}

where we have used notation
\begin{equation*}
    \rho_t:=\rho(t,x_t),
    \qquad
    v_t:=v(t,x_t), \quad q_t=q(t, x_t).
\end{equation*}
Moreover, the spatial derivatives of $v_t$ are understood as the
corresponding spatial derivatives of $v(t,x)$ evaluated at $x=x_t$.
\begin{remark}
For the entropy $H(\rho)=\rho\log\rho-\rho,$
the generalized score function $q= \nabla_x H'(\rho),$
reduces to the actual score function $s =\nabla_x\log\rho$.
Along the particle trajectory, we obtain the following ODE:
\begin{equation}\label{ODE_s}
    \dot{s}_t
    =
    -(\nabla_x v_t)^\top s_t
    -
    \nabla_x(\nabla_x\cdot v_t).
\end{equation}

For the porous-medium energy
\begin{equation*}
    H(\rho)=\rho^m,
    \qquad m>1,
\end{equation*}
It follows from \eqref{general_q}, along the particle trajectory $q_t = m\nabla_x\rho^{m-1}(t,x_t)$, together with \eqref{material_rho}, we have the following ODE system:
\begin{equation}\label{ODE_q}
\begin{cases}
\displaystyle
    \dot{\rho}_t
    =
    -\rho_t\nabla_x\cdot v_t,\\
\displaystyle
    \dot{q}_t
    =
    -(\nabla_x v_t)^\top q_t
    -(m-1)(\nabla_x\cdot v_t)q_t
    -
    m(m-1)\rho_t^{m-1}
    \nabla_x(\nabla_x\cdot v_t).
\end{cases}
\end{equation}
    
\end{remark}

\section{Neural/particle Discretization and Algorithm}

Before proceeding, we introduce the notation used throughout this section. The time interval $[0,T]$ is partitioned into $N_t$ uniform subintervals of size  $\Delta t = T/N_t$, with grid points 
$\{t_j\}_{j=0}^{N_t}$. 
Let $\{x_0^{(n)}\}_{n=1}^{N_p}$ be i.i.d. samples drawn from the initial density $\rho_0$. The corresponding particle trajectories, generated by the characteristic flow, are denoted by
\begin{equation*}
x^{(n)}(t_j) := x(t_j; x_0^{(n)}).
\end{equation*}

\subsection{Propagation along characteristic trajectories}
We parameterize the velocity field by a neural network $v_\theta(t,x)$. The particle trajectories are generated by the neural ODE
\begin{equation*}
\dot{x}_t = v_\theta(t,x_t),
\qquad
x_0 \sim \rho_0/M.
\end{equation*}

The auxiliary variable
\[
q_t=\nabla_x H'(\rho_t)
\]
is propagated along the same particle trajectories. Together with the
density evolution along the trajectories, the coupled dynamics are given by
\begin{equation*}
\begin{aligned}
\dot{\rho}_t
&=
-\rho_t\,\nabla_x\cdot v_\theta(t,x_t),\\
\dot{q}_t
&=
-(\nabla_x v_\theta(t,x_t))^\top q_t
-\rho_t H''(\rho_t)\,
\nabla_x(\nabla_x\cdot v_\theta(t,x_t))\\
&\quad
-(\nabla_x\cdot v_\theta(t,x_t))
\left(
1+\frac{\rho_t H'''(\rho_t)}{H''(\rho_t)}
\right)q_t.
\end{aligned}
\end{equation*}

Using the Euler time-discretization, the coupled dynamics become
\begin{equation*}
\begin{aligned}
x_{t_{j+1}}^{(n)}
&=
x_{t_j}^{(n)}
+
\Delta t\,v_\theta(t_j,x_{t_j}^{(n)}),\\
\rho_{t_{j+1}}^{(n)}
&=
\rho_{t_j}^{(n)}
-
\Delta t\,\rho_{t_j}^{(n)}
\nabla_x\cdot v_\theta(t_j,x_{t_j}^{(n)}),\\
q_{t_{j+1}}^{(n)}
&=
q_{t_j}^{(n)}
-
\Delta t
\Bigg[
(\nabla_x v_\theta(t_j,x_{t_j}^{(n)}))^\top q_{t_j}^{(n)} +\rho_{t_j}^{(n)}H''(\rho_{t_j}^{(n)})
\nabla_x(\nabla_x\cdot v_\theta(t_j,x_{t_j}^{(n)}))
\\
&\qquad\qquad
+(\nabla_x\cdot v_\theta(t_j,x_{t_j}^{(n)}))
\left(
1+
\frac{
\rho_{t_j}^{(n)}H'''(\rho_{t_j}^{(n)})
}{
H''(\rho_{t_j}^{(n)})
}
\right)
q_{t_j}^{(n)}
\Bigg].
\end{aligned}
\end{equation*}

At $t_0=0$, the density and generalized score are initialized by
\[
\rho_0^{(n)}=\rho_0(x_0^{(n)}),
\qquad
q_0^{(n)}
=
\nabla_x H'\!\left(\rho_0(x)\right)|_{x=x_0^{(n)} },
\]
where $x_0^{(n)}$ are sampled from the initial density $\rho_0$.
Thus, both the particle positions $x_{t_j}^{(n)}$ and the $q^{(n)}_{t_j}$ can be computed recursively along the trajectories.
One important distinction is that 
\[
\mu_0^{N_p}:= \frac{M}{N_p}\sum_{n=1}^{N_p} \delta_{x_0^{(n)}}
\]
is the empirical approximation of the probability measure $
\rho_0(x)\,dx$, 
while $
\rho_0^{(n)}=\rho_0\bigl(x_0^{(n)}\bigr)$ is a quantity carried along each characteristic.


\begin{remark}[Asymptotic mass preservation] Forward Euler satisfies the Lagrangian mass relation up to an $O(\Delta t^2)$ local error (asymptotically exact as $\Delta t \to 0$), a property shared by similar characteristic-based neural methods
\cite{Eric23,zhou2025}. Alternatively, exact discrete mass preservation can be enforced directly via:
\[
\rho_{t_{j+1}}^{(n)}
=
\frac{\rho_{t_j}^{(n)}}
{\det\!\left(
I+\Delta t\,\nabla_x v_\theta
(t_j,x_{t_j}^{(n)})
\right)}.
\]
Indeed, the forward Euler characteristic map satisfies
\[
J_{j+1}^{(n)}
=
J_j^{(n)}
\det\!\left(
I+\Delta t\,\nabla_x v_\theta
\bigl(t_j,x_{t_j}^{(n)}\bigr)
\right),
\]
and hence
\[
\rho_{j+1}^{(n)}J_{j+1}^{(n)}
=
\rho_j^{(n)}J_j^{(n)}.
\]
Thus, the discrete Lagrangian mass relation is preserved exactly.

Furthermore, in the limit  $N_p\to\infty$, the particle approximation recovers the corresponding continuum (in space) evolution. For $\Delta t $ sufficiently small, $\rho_{t_{j+1}}= (I+\Delta t v_\theta(t_j,x))_\#\rho_{t_{j}}$, we obtain   
$$
E(\rho_{t_{j+1}})=
E(\rho_{t_j})+\Delta t
\int_\Omega
\rho_{t_j}
\nabla_x\mu[\rho_{t_j}]
\cdot v_\theta(t_j,x)\,dx
+O(\Delta t^2).
$$
Hence 
$$
E(\rho_{t_{j+1}})-
E(\rho_{t_j})=-\Delta t \int_\Omega \rho_{t_j} |\nabla\mu[\rho_{t_j}]|^2dx
+ \Delta t \int_\Omega \rho_{t_j} \nabla\mu[\rho_{t_j}]\cdot e_\theta dx + O(\Delta t^2), 
$$
where the local error $e_\theta =v_\theta(t_j, x) +\nabla_x \mu[\rho_{t_j}(x)]$.  Thus, energy dissipation persists provided that the training error remains sufficiently small relative to the intrinsic energy dissipation.
\end{remark}

\begin{remark}[Conditional positivity preservation] Forward Euler preserves density positivity under the time-step constraint: 
$$
\Delta t < \frac{1}{\|\nabla_x\cdot v_\theta\|_{L^\infty([0,T]\times\Omega)}}.
$$
Alternatively, positivity is preserved unconditionally by using the density update  
$$
\rho_{t_{j+1}}^{(n)}= \rho_{t_{j}}^{(n)}\exp\left(-\Delta t \nabla_x\cdot v_\theta(t_j, x^{(n)}_{t_j})\right). 
$$
\end{remark}

\subsection{Interaction potential}

For each fixed time $t_j$, we approximate the density $\rho(t_j, \cdot)$ by the empirical particle measure 
$$
\mu_{t_j}^{N_p} := \frac{M}{N_p}
\sum_{m=1}^{N_p}
\delta_{x_{t_j}^{(m)}},
$$
where
$
M: =\int_\Omega\rho_0(x)dx
$
is the total mass. 
Consequently,
\begin{equation}\label{W_app}
\nabla(W\ast\rho)(x_{t_j}^{(n)})
\approx
\frac{M}{N_p}
\sum_{m=1}^{N_p}
\nabla W(x_{t_j}^{(n)}-x_{t_j}^{(m)}) := F_{t_j}^{(n)}.
\end{equation}
Hence, the interaction force is evaluated directly over the particle positions. 

\begin{remark} 
The direct pairwise evaluation of the nonlocal interaction term incurs a 
computational complexity of $\mathcal{O}(N_p^2)$ per time step, where $N_p$ 
is the total number of particles. For the particle counts used in this work, this cost remained manageable. To overcome this quadratic bottleneck in larger simulations, fast summation methods \cite{GR1987, GS1991} can reduce complexity to $\mathcal{O}(N_p \log N_p)$ or even $\mathcal{O}(N_p)$.
\end{remark} 

\begin{remark}[Singular interaction potentials]
For singular interaction potentials, the self-interaction term ($m=n$) is set to zero in the numerical implementation, so the singular value $\nabla W(0)$ will not be evaluated.
\end{remark}

\subsection{Numerical algorithm}
Combining the particle approximation with the propagated quantity of $q$ yields the following discrete objective functional: 
\begin{equation}\label{L_D_def}
L_D(\theta)
=
\frac{M}{N_p}
\sum_{n=1}^{N_p}
\sum_{j=0}^{N_t-1}
\left|
v_\theta(t_j,x_{t_j}^{(n)})
+
q_{t_j}^{(n)}
+
\nabla V(x_{t_j}^{(n)})
+
F_{t_j}^{(n)}
\right|^2
\Delta t.
\end{equation}
Here, the first term is the neural approximation of the velocity field, the second is the auxiliary variable propagated along the particle trajectories, and the last two terms represent the external and nonlocal interaction forces, respectively.

We minimize $L_D(\theta)$ with respect to the neural-network parameters $\theta$ using an appropriate optimizer. At each optimization step, the particle trajectories and $q$ are updated recursively, while the interaction force is evaluated by \eqref{W_app}. The resulting procedure provides a fully particle-based approximation of \eqref{opt0}.

The complete numerical procedure is summarized in Algorithm \ref{alg:interaction-flow}.

\begin{remark}[Computational cost]
Spatial derivatives of the neural velocity
field, up to second-order derivatives, are evaluated via automatic differentiation.
While effective in our implementation, their evaluations introduce  computational and
memory overhead that scales with spatial dimension and network size.
\end{remark}

\begin{algorithm}[htbp]
\caption{Generalized score-based neural transport 
}
\label{alg:interaction-flow}
\begin{algorithmic}[1]
\Require Time horizon $T$, time steps $N_t$ with $\Delta t=T/N_t$, particle number $N_p$, initial density $\rho_0$, learning rate $\eta$, optimization steps $N_{\mathrm{step}}$
\State Initialize parameters $\theta$ of $v_\theta$
\State Set time grid $\{t_j\}_{j=0}^{N_t}$
\For{$\mathrm{step}=1,2,\cdots,N_{\mathrm{step}}$}
\State Sample $\{x_0^{(n)}\}_{n=1}^{N_p}\sim \rho_0/M$, set $\displaystyle x_{t_0}^{(n)}=x_0^{(n)}$
\State Initialize $\displaystyle \rho_{t_0}^{(n)}=\rho_0(x_0^{(n)})$
\State Initialize $\displaystyle q_{t_0}^{(n)}=\nabla_x H'(\rho_0(x_0^{(n)}))$
\State Set $\mathrm{Loss}=0$
\For{$j=0,1,\cdots,N_t-1$}
\State Compute interaction force
\State $\displaystyle F_{t_j}^{(n)}=\frac{M}{N_p}\sum_{m=1}^{N_p}\nabla W(x_{t_j}^{(n)}-x_{t_j}^{(m)})$
\State Accumulate objective
\State $\displaystyle \mathrm{Loss}=\mathrm{Loss}+\frac{M}{N_p}\sum_{n=1}^{N_p}\left|v_\theta(t_j,x_{t_j}^{(n)})+q_{t_j}^{(n)}+\nabla V(x_{t_j}^{(n)})+F_{t_j}^{(n)}\right|^2\Delta t$
\State Update particle position
\State $\displaystyle x_{t_{j+1}}^{(n)}=x_{t_j}^{(n)}+\Delta t\,v_\theta(t_j,x_{t_j}^{(n)})$
\State Update density variable
\State $\displaystyle \rho_{t_{j+1}}^{(n)}
=\rho_{t_j}^{(n)}
-\Delta t\,\rho_{t_j}^{(n)}
\nabla_x\cdot v_\theta(t_j,x_{t_j}^{(n)})$
\State Update auxiliary variable
\State $\displaystyle
\begin{aligned}
q_{t_{j+1}}^{(n)}
={}&
q_{t_j}^{(n)}
-\Delta t
\Bigg[
(\nabla_x v_\theta(t_j,x_{t_j}^{(n)}))^\top q_{t_j}^{(n)}
+\rho_{t_j}^{(n)}H''(\rho_{t_j}^{(n)})
\nabla_x(\nabla_x\cdot v_\theta(t_j,x_{t_j}^{(n)}))
\\
&
+(\nabla_x\cdot v_\theta(t_j,x_{t_j}^{(n)}))
\left(
1+\frac{\rho_{t_j}^{(n)}H'''(\rho_{t_j}^{(n)})}
{H''(\rho_{t_j}^{(n)})}
\right)
q_{t_j}^{(n)}
\Bigg].
\end{aligned}$
\EndFor
\State Compute $\nabla_\theta \mathrm{Loss}$ by Autodiff through the trajectory $\{x_{t_j}^{(n)},\rho_{t_j}^{(n)},q_{t_j}^{(n)}\}_{j=0}^{N_t}$
\State Update $\theta$ with Adam using $\nabla_\theta \mathrm{Loss}$
\EndFor
\end{algorithmic}
\end{algorithm}

The computational graph is retained through the entire trajectory, so the dependence of both $x_{t_j}^{(n)}$ and $q_{t_j}^{(n)}$ on $\theta$ is included when computing $\nabla_\theta L_D$
by automatic differentiation.

\section{Error Analysis}
In this section, we analyze the error associated with Algorithm
\ref{alg:interaction-flow}. We restrict our analysis to the entropy case
\begin{equation*}
    H(\rho)=\rho\log\rho-\rho,
\end{equation*}
for which
\begin{equation*}
    q=\nabla_x H'(\rho)=\nabla_x\log\rho=s.
\end{equation*}
For a given parameter $\theta$, let $\rho_\theta$ denote the density generated  by the neural velocity field $v_\theta$. Specifically, 
\begin{equation*}
    \rho_\theta(t,\cdot)=(x_t)_\#\rho_0,
    \qquad
    \dot{x}_t=v_\theta(t,x_t), \quad x_0\sim\rho_0/M.
\end{equation*}
Since the initial particles $\{x_0^{(n)}\}_{n=1}^{N_p}$ are i.i.d. samples from $\rho_0/M$, their corresponding exact particle trajectories remain i.i.d. for each fixed $t$, with
\begin{equation*}
    x_t^{(n)}\sim\rho_\theta(t,\cdot)/M, \qquad n=1,\ldots,N_p.
\end{equation*}
We define
\begin{equation*}
\theta_S
=
\arg\min_{\theta} L_S(\theta)
\end{equation*}
to be a minimizer of the continuous,  population-level objective, 
\begin{equation*}
L_S(\theta)
=
\int_0^T\int_\Omega
\left|v_\theta+\nabla\mu_\theta\right|^2
\rho_\theta\,dxdt,
\end{equation*}
where $\mu_\theta := \mu[\rho_{\theta}] = H'(\rho_\theta) + V + W \ast \rho_\theta$ denotes the corresponding chemical potential. Similarly, we define
\begin{equation*}
\theta_D
=
\arg\min_{\theta} L_D(\theta),
\end{equation*}
where $L_D(\theta)$ is defined in \eqref{L_D_def}.

Let $\theta_k$ denote the parameter obtained after $k$ iterations. Since $\theta_D$ minimizes the discrete objective $L_D$, we have $L_D(\theta_S)\geq L_D(\theta_D)$. Therefore, the population risk at $\theta_k$ can be bounded as  
\begin{equation}\label{L_Decomp_1}
\begin{aligned}
L_S(\theta_k) & \leq L_S(\theta_k) + L_D(\theta_S) - L_D(\theta_D) := \varepsilon_{\mathrm{app}} + \varepsilon_{\mathrm{opt}} + \varepsilon_{\mathrm{MD}}.
\end{aligned}
\end{equation}
Here $$\varepsilon_{\mathrm{app}} := L_S(\theta_S)$$ denotes the approximation error,
$$\varepsilon_{\mathrm{opt}} := L_D(\theta_k)-L_D(\theta_D)$$ represents the optimization error with respect to the discrete objective,
while $$\varepsilon_{\mathrm{MD}}:= \Bigl[
L_S(\theta_k)-L_D(\theta_k) \Bigr] +\Bigl[L_D(\theta_S)-L_S(\theta_S)\Bigr]$$ accounts for the discrepancy between the population and discrete objectives.
Here $L_D(\theta_k)$ is understood to be evaluated on a batch $N_p$ particles, which is independent of the training data. 

Then we first state the main error estimate.

\begin{assumption}\label{Ass_thm}
Let $\rho^*$ denote the strictly positive, smooth solution to \eqref{Ori_sys}, and set
$s^*:=\nabla_x\log\rho^*.$
Assume that:
\begin{itemize}
    \item[(i)] the potentials are smooth so that
    \begin{equation*}
    \nabla_x V\in W^{1,\infty}(\Omega),
    \qquad
    \nabla_x W\in L^1(\Omega)\cap W^{1,\infty}(\Omega);
    \end{equation*}
    \item[(ii)] for the parameters $\theta$ considered in the error analysis,
    \begin{equation*}
    v_\theta,\ \nabla_xv_\theta,\ \nabla_x(\nabla_x\cdot v_\theta)
    \in L^\infty([0,T]\times\Omega),
    \end{equation*}
    and $v_\theta$ is Lipschitz continuous in $t$, uniformly in $x\in\Omega$, i.e.,
    $\partial_t v_\theta\in L^\infty([0,T]\times\Omega)$.
\end{itemize}
\end{assumption}

The main result is summarized in Theorem \ref{Thm_total_error}.

\begin{theorem}\label{Thm_total_error}
Assume that the initial density $\rho_0$ is strictly positive and sufficiently smooth, with total mass $M:=\int_\Omega \rho_0\,dx$.
Let $\rho^*$ be the smooth solution for \eqref{Ori_sys}. Under Assumption~\ref{Ass_thm}, let $\theta_k$ denote the parameter obtained after $k$ iterations.

Then, for any $\delta\in(0,1)$, with probability at least $1-\delta$,
\begin{equation*}
L_S(\theta_k)
\le
C_S
\int_0^T
\|v_{\theta_S}(t,\cdot)-v^*(t,\cdot)\|^2_{W^{2,\infty}(\Omega)}
\,dt
+
\varepsilon_{\mathrm{opt}}
+
C_{\mathrm{MD}}
\left(
\sqrt{\frac{\log(N_p N_t/\delta)}{N_p}}
\;+\;
\Delta t
\right),
\end{equation*}
where
\begin{equation*}
v^*
=
-\nabla_x\log\rho^*
-\nabla_xV
-\nabla_x(W\ast\rho^*).
\end{equation*}
Here $C_S$ and $C_{\mathrm{MD}}$
are both independent of $\theta, N_p$ and $\Delta t$.
\end{theorem}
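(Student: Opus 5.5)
The proof starts from the decomposition \eqref{L_Decomp_1}, $L_S(\theta_k)\le \varepsilon_{\mathrm{app}}+\varepsilon_{\mathrm{opt}}+\varepsilon_{\mathrm{MD}}$. The optimization error $\varepsilon_{\mathrm{opt}}$ is carried through unchanged, so two things remain to be shown: (a) $\varepsilon_{\mathrm{app}}=L_S(\theta_S)\le C_S\int_0^T\|v_{\theta_S}-v^*\|^2_{W^{2,\infty}}dt$, and (b) for each fixed $\theta$ satisfying Assumption~\ref{Ass_thm}, with high probability,
\[
|L_S(\theta)-L_D(\theta)|\le C\Bigl(\sqrt{\log(N_pN_t/\delta)/N_p}+\Delta t\Bigr).
\]
Statement (b) is applied at $\theta=\theta_S$ and at $\theta=\theta_k$, each with confidence $\delta/2$, followed by a union bound. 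Using an independent evaluation batch for $L_D(\theta_k)$ is what makes $\theta_k$ effectively deterministic in this step.

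\textbf{Approximation error.} Write $e:=v_{\theta_S}-v^*$. Since $v^*+\nabla\mu[\rho^*]=0$, we have
\[
v_{\theta_S}+\nabla\mu_{\theta_S}=e+\nabla\log\frac{\rho_{\theta_S}}{\rho^*}+\nabla W*(\rho_{\theta_S}-\rho^*).
\]
Hence $L_S(\theta_S)\le 3\int\!\!\int\rho_{\theta_S}\bigl(|e|^2+|s_{\theta_S}-s^*|^2+|\nabla W*(\rho_{\theta_S}-\rho^*)|^2\bigr)$.

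The convolution term is handled by Young's inequality: it is at most $\|\nabla W\|_{L^\infty}^2\|\rho_{\theta_S}-\rho^*\|_{L^1}^2$, and Pinsker (or Lemma~\ref{L2_KL}) bounds this by $C\,D_{KL}(\rho_{\theta_S}\|\rho^*)$.

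The score term is treated through the characteristics, and this is the main technical step. Along the flow of $v_{\theta_S}$, the score $s_{\theta_S}$ obeys \eqref{ODE_s}. The exact score $s^*$ satisfies the same ODE but along the flow of $v^*$. I would compare the two Eulerian equations \eqref{general_H} (with $q=s$), namely
\[
\partial_t s+\nabla(v\cdot s)+\nabla(\nabla\cdot v)=0,
\]
for $v=v_{\theta_S}$ and for $v=v^*$. Subtracting gives a linear transport equation for $d:=s_{\theta_S}-s^*$, with source $-\nabla(e\cdot s^*)-\nabla(\nabla\cdot e)$. The source is bounded in $L^\infty$ by $C\|e\|_{W^{2,\infty}}$, using smoothness of $\rho^*$, which gives $s^*,\nabla s^*\in L^\infty$. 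The coefficients $\nabla v_{\theta_S}$ are bounded by Assumption~\ref{Ass_thm}(ii), and $d(0)=0$. A Gr\"onwall estimate along characteristics then yields
\[
\|d(t)\|_{L^\infty}^2\le C\int_0^t\|e\|^2_{W^{2,\infty}}ds.
\]
For the KL term, Theorem~\ref{KL_control} could be used, but it refers back to $L_S$ itself, which would be circular. Instead I would use Lemma~\ref{Lemma_KL_t} directly: with Young's inequality it gives $\tfrac{d}{dt}D_{KL}\le \int\rho|e|^2+C\,D_{KL}$, hence $D_{KL}\le C\int_0^T\|e\|^2_{L^\infty}$. Integrating over $[0,T]$ and using $\int\rho_{\theta_S}=M$ then gives (a). The constant $C_S$ depends on $T$, $M$, the assumed bounds and $\rho^*$, but not on $N_p$ or $\Delta t$.

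\textbf{Sampling and discretization.} I would split $L_S-L_D$ into three pieces.
\begin{enumerate}
\item \emph{Time quadrature.} Replace $\int_0^T$ by the Riemann sum at the nodes $t_j$. The error is $O(\Delta t)$ because the integrand is Lipschitz in $t$; this uses $\partial_tv_\theta\in L^\infty$ and the propagated bounds on $s_\theta$ and $\nabla s_\theta$.
\item \emph{Forward Euler.} Replace the exact trajectory values $(x,s)(t_j)$ by their Euler iterates. The local error is $O(\Delta t^2)$, and a discrete Gr\"onwall argument on the coupled $(x,s)$ system, whose right-hand side is Lipschitz under Assumption~\ref{Ass_thm}, gives a global error $O(\Delta t)$. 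The integrand is a square of bounded quantities, so this transfers linearly to the loss.
\item \emph{Monte Carlo.} For each $j$, the exact particles $x^{(n)}_{t_j}$ are i.i.d.\ with law $\rho_\theta(t_j)/M$. The empirical average of the bounded summand $|v_\theta+s_\theta+\nabla V+\nabla W*\rho_\theta|^2$ concentrates by Hoeffding. The interaction force $F^{(n)}_{t_j}$ differs from $\nabla W*\rho_\theta(x^{(n)}_{t_j})$ by a sum of bounded, conditionally centered terms, so Hoeffding applied conditionally on $x^{(n)}$ (with $N_p-1$ samples and an $O(1/N_p)$ self-term) controls it. A union bound over $n\le N_p$ and $j\le N_t$ produces the factor $\log(N_pN_t/\delta)$.
\end{enumerate}
Collecting the three pieces gives (b) with $C_{\mathrm{MD}}$ depending only on the uniform bounds.

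I expect the score-stability step in (a) to be the main obstacle. It requires uniform $L^\infty$ control of $s_\theta$ and $\nabla s_\theta$, which also feeds the Lipschitz constants in (b). This control must come from the ODE \eqref{ODE_s} together with Assumption~\ref{Ass_thm}(ii) and the smoothness of $\rho_0$. I would first derive $\|s_\theta\|_{L^\infty}\le C$ via Gr\"onwall on \eqref{ODE_s}, then differentiate \eqref{ODE_s} once more for $\nabla s_\theta$. If that second step would need third derivatives of $v_\theta$ beyond what the assumption provides, I would instead compare along trajectories and absorb the discrepancy between characteristics using only the Lipschitz bounds.
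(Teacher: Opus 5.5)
Your skeleton matches the paper's. You use the decomposition \eqref{L_Decomp_1} and the same three-term split of $L_S(\theta_S)$. Your transport equation for the score difference is the paper's: the source $-\nabla(e\cdot s^*)-\nabla(\nabla\cdot e)$ coincides with its right-hand side because $\nabla s^*$ is symmetric. Your quadrature/Euler/Monte Carlo split of $L_S-L_D$, with conditional Hoeffding, the $O(1/N_p)$ self-term and a union bound over $(n,j)$, is also the paper's.

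\textbf{The gap: the KL estimate.} Lemma~\ref{Lemma_KL_t} with $v=v_{\theta_S}$ contains only the cross term
\[
\frac{d}{dt}D_{KL}(\rho_{\theta_S}\|\rho^*)=\int_\Omega\rho_{\theta_S}\,\nabla\log\frac{\rho_{\theta_S}}{\rho^*}\cdot e\,dx .
\]
Young's inequality therefore leaves $\tfrac12\int_\Omega\rho_{\theta_S}|\nabla\log(\rho_{\theta_S}/\rho^*)|^2dx$ on the right. This is a relative Fisher information, and it is not bounded by $C\,D_{KL}$: oscillatory perturbations of $\rho^*$ make the ratio arbitrarily large, and log-Sobolev goes the other way. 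In the proof of Theorem~\ref{KL_control} this term is absorbed by the dissipative term $-\mathrm{II}$. That term exists only because $v-v^*$ is rewritten through the residual $v+\nabla\mu[\rho]$, which is exactly the circular route you rightly rejected. With $e$ used directly, nothing absorbs it. So neither $\frac{d}{dt}D_{KL}\le\int\rho|e|^2+C\,D_{KL}$ nor $D_{KL}\le C\int_0^T\|e\|_{L^\infty}^2dt$ is justified. The density discrepancy is naturally controlled by derivatives of $e$; the paper's bound for $e_\rho$ involves $\|\nabla\cdot e_v\|_{L^2}$.

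\textbf{The repair.} You already have what you need, since $\nabla\log(\rho_{\theta_S}/\rho^*)=s_{\theta_S}-s^*=d$. Hence
\[
D_{KL}(\rho_{\theta_S}\|\rho^*)(t)=\int_0^t\!\!\int_\Omega\rho_{\theta_S}\,d\cdot e\,dx\,ds\le\frac M2\int_0^t\bigl(\|d\|_{L^\infty}^2+\|e\|_{L^\infty}^2\bigr)ds\le C\int_0^T\|e\|_{W^{2,\infty}}^2dt
\]
by your characteristic estimate for $d$. Pinsker then closes the convolution term.

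With this fix your argument is a valid variant of the paper's. You use an $L^\infty$ Gr\"onwall bound along characteristics for $d$, where the paper uses a $\rho_{\theta_S}$-weighted $L^2$ energy estimate. You use $L^\infty$--$L^1$ Young plus Pinsker, fed by the score bound, where the paper uses $L^1$--$L^2$ Young plus a separate Gr\"onwall estimate for $\|e_\rho\|_{L^2}$. This saves one energy estimate at no cost, since the target norm is $W^{2,\infty}$ anyway.

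\textbf{On $\nabla s_\theta$ and the Euler step.} Your worry about $\nabla s_\theta$ is unfounded in two places. It is not needed in (a), since only $s^*$ enters the equation for $d$. It is not needed in the quadrature step either: write $\Phi(t)=\int_\Omega g_\theta(t,X_t(x_0))\rho_0(x_0)\,dx_0$, with $X_t$ the flow of $v_\theta$, and differentiate along trajectories using \eqref{ODE_s}, the pushforward representation of $F_\theta$, and Assumption~\ref{Ass_thm}. The Euler step is what genuinely needs more. Lipschitz continuity in $x$ of $(\nabla_xv_\theta)^\top s+\nabla_x(\nabla_x\cdot v_\theta)$ requires bounds on $\nabla^2v_\theta$ and $\nabla^3v_\theta$, which Assumption~\ref{Ass_thm}(ii) does not give. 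The paper states this as an extra hypothesis in Lemma~\ref{lem:euler_consistency}; you should do the same rather than attribute it to the assumption.
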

\begin{remark}[Scope of Theorem~5.2 and Treatment of Singular Kernels]
\label{rem:singular_kernel}
The theoretical convergence bound in Theorem~5.2 assumes $\nabla W \in L^1(\Omega) \cap W^{1,\infty}(\Omega)$, which implies that the interaction force is globally bounded and Lipschitz continuous. 
The 2D Patlak--Keller--Segel model utilizes the Newtonian potential interaction $\nabla W(x) = \frac{\chi}{2\pi} \frac{x}{|x|^2}$, which possesses a $O(|x|^{-1})$ singularity at the origin and does not belong to $W^{1,\infty}(\Omega)$. To bridge this gap:
\begin{enumerate}
    \item \textbf{Theoretical Coverage:} The theoretical estimates apply to a mollified/regularized kernel $\nabla W_\varepsilon := \nabla W \ast \eta_\varepsilon \in W^{1,\infty}(\Omega)$, where $\eta_\varepsilon$ is a smooth mollifier with regularization parameter $\varepsilon > 0$.
    \item \textbf{Numerical Implementation:} In our numerical experiments for Keller--Segel, we employ the blob-regularized kernel:
    \[
    \nabla W_\varepsilon(x) = \frac{\chi}{2\pi} \frac{x}{|x|^2 + \varepsilon^2}, \quad \text{with } \varepsilon > 0,
    \]
    which satisfies $\nabla W_\varepsilon \in W^{1,\infty}(\Omega)$ for any fixed $\varepsilon > 0$. Self-interaction ($x=0$) naturally vanishes under this formulation.
\end{enumerate}
\end{remark}

\begin{remark}[Interpretation of Theorem \ref{Thm_total_error}]
Theorem~ \ref{Thm_total_error} provides a conditional error decomposition rather than an unconditioned convergence guarantee. It isolates the contributions of neural network approximation capacity, optimization residuals, and sampling error. To convert this error bound into an asymptotic convergence result, one must complement Theorem \ref{Thm_total_error} with explicit hypotheses on neural network expressivity and optimization performance.
\end{remark}
\begin{corollary}[End-to-End Asymptotic Convergence]
\label{cor:convergence}
Let the hypotheses of Theorem \ref{Thm_total_error} hold. Assume in addition:
\begin{enumerate}
    \item \textbf{(Approximation Property):} The true velocity field $v^*$ satisfies $v^* \in C([0,T]; W^{2,\infty}(\Omega;\mathbb{R}^d))$. For every $\epsilon > 0$, there exists a sequence of neural network architectures $\mathcal{N}_m$ with parameters $\theta \in \mathbb{R}^{m}$ such that:
    \[
    \inf_{\theta \in \mathbb{R}^{m}} \int_0^T \| v_{\theta}(t, \cdot) - v^*(t, \cdot) \|_{W^{2,\infty}(\Omega)}^2 \, \mathrm{d}t \le \epsilon \quad \text{as } m \to \infty.
    \]
    \item \textbf{(Optimization Guarantee):} For each architecture $\mathcal{N}_m$, the training optimization algorithm (e.g., gradient descent) achieves a parameter $\theta_k$ at iteration $k$ satisfying an optimization error $\varepsilon_{\text{opt}}(k, m)$ such that $\lim_{k, m \to \infty} \varepsilon_{\text{opt}}(k, m) = 0$.
\end{enumerate}
Then, taking the joint limit of network capacity $m \to \infty$, optimization iterations $k \to \infty$, sample sizes $N_p, N_t \to \infty$, and step size $\Delta t \to 0$, the learned velocity field $v_{\theta_k}$ converges to $v^*$ in probability, and the total empirical loss satisfies:
\[
L_S(\theta_k) \xrightarrow{\quad P \quad} 0.
\]
\end{corollary}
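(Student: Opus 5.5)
The plan is to make the bound of Theorem~\ref{Thm_total_error} vanish term by term under the joint limit, and then turn smallness of $L_S(\theta_k)$ into convergence of $v_{\theta_k}$ to $v^*$ in probability via Theorem~\ref{KL_control}.

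\textbf{Step 1: the three error terms.} Fix $\epsilon>0$ and, by the Approximation Property, choose $m$ and $\theta_S$ with $\int_0^T\|v_{\theta_S}-v^*\|^2_{W^{2,\infty}}dt\le\epsilon$. The approximation term in Theorem~\ref{Thm_total_error} is then at most $C_S\epsilon$. The optimization term is handled by the Optimization Guarantee: $\varepsilon_{\mathrm{opt}}(k,m)\to0$. For the sampling and discretization term, I choose $\delta=\delta_\ell\to0$ slowly and send $N_p\to\infty$, $\Delta t\to0$ (so $N_t=T/\Delta t\to\infty$) along a schedule with $\log(N_pN_t/\delta)/N_p\to0$. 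A polynomial coupling $N_t\lesssim N_p^a$ suffices. Consequently, for any $\eta>0$,
\[
P\bigl(L_S(\theta_k)>C_S\epsilon+\eta\bigr)\le\delta_\ell+\mathbf{1}\{\varepsilon_{\mathrm{opt}}+C_{\mathrm{MD}}(\cdots)>\eta\}\to0 .
\]
Since $\epsilon$ is arbitrary, this gives $L_S(\theta_k)\to0$ in probability.

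\textbf{Step 2: uniformity of the constants.} The delicate point is that $C_S$ and $C_{\mathrm{MD}}$ must not blow up along the sequence of architectures. They depend on the $L^\infty$ bounds in Assumption~\ref{Ass_thm}(ii). For $\theta_S$ these are controlled by $\|v^*\|_{W^{2,\infty}}$ plus $\epsilon$. For the iterates $\theta_k$, I would restrict to a parameter class with uniform bounds on $v_\theta,\nabla v_\theta,\nabla(\nabla\cdot v_\theta),\partial_t v_\theta$, in the spirit of Assumption~\ref{Ass_thm}, which is stated uniformly over the parameters considered. I expect this step to be the main obstacle. If needed, I would state explicitly that the constants of Theorem~\ref{Thm_total_error} are uniform over the family $\{\mathcal N_m\}$.

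\textbf{Step 3: convergence of the velocity.} In the entropy case, $L_S(\theta_k)=2J(\rho_{\theta_k},v_{\theta_k})$. By \eqref{Up_KL} of Theorem~\ref{KL_control}, $D_{KL}(\rho_{\theta_k}\|\rho^*)\le e^{CT}L_S(\theta_k)$, uniformly in $t\in[0,T]$. The constant $C$ is uniform by Step~2, since it depends on $\|\nabla\cdot v_\theta\|_\infty$. Hence $\rho_{\theta_k}\to\rho^*$ in probability in KL, and, using the uniform density bounds from the Lipschitz flow, also in $L^2$ by Lemma~\ref{L2_KL}.

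Next, write
\[
v_{\theta_k}-v^*=(v_{\theta_k}+\nabla\mu_{\theta_k})-(\nabla\mu_{\theta_k}-\nabla\mu[\rho^*]).
\]
The first term is small in $L^2_{\rho_{\theta_k}}$ by $L_S$. For the second, the interaction part $\nabla W*(\rho_{\theta_k}-\rho^*)$ is bounded using $\nabla W\in L^\infty$ and Pinsker's inequality. The score part $\nabla\log\rho_{\theta_k}-\nabla\log\rho^*$ is the harder piece. I would first try to obtain it weakly, by pairing against test functions and integrating by parts so that only $\rho_{\theta_k}-\rho^*$ appears. This yields convergence of $v_{\theta_k}$ to $v^*$ in the distributional, $\rho$-weighted sense, in probability. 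This is the natural reading of ``converges in probability'' given the available estimates.
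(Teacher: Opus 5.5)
The paper gives no written proof of this corollary. The implicit argument is your Step 1: send each term of Theorem~\ref{Thm_total_error} to zero. The paper never addresses convergence of $v_{\theta_k}$ itself. Two of your observations are genuine corrections to the statement as written. First, the joint limit must be restricted so that $\log(N_pN_t/\delta)/N_p\to0$; an unrestricted limit with $N_t$ exponential in $N_p$ breaks the bound. Second, $C_S$ and $C_{\mathrm{MD}}$ must be uniform across the architectures $\mathcal N_m$, which the paper only asserts.

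\textbf{Gap in Step 1, carried into Step 2.} You cannot ``choose'' $\theta_S$: it is the minimizer of $L_S$. The Approximation Property says nothing about $\int_0^T\|v_{\theta_S}-v^*\|^2_{W^{2,\infty}}dt$, because $L_S$ is a much weaker quantity. So the bound of Theorem~\ref{Thm_total_error} cannot be applied verbatim.

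The fix is to take a near-best approximant $\tilde\theta\in\mathbb R^m$. The proof of Lemma~\ref{Lemma_LD} never uses minimality, so $\varepsilon_{\mathrm{app}}=L_S(\theta_S)\le L_S(\tilde\theta)\le C_S\epsilon$. Cleaner still, use $\tilde\theta$ as the comparator in \eqref{L_Decomp_1}, which only needs $L_D(\tilde\theta)\ge L_D(\theta_D)$, and apply Lemma~\ref{Lemma_MC} at $\tilde\theta$.

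Your claim in Step 2 that the bounds for $\theta_S$ are controlled by $\|v^*\|_{W^{2,\infty}}+\epsilon$ is also unjustified. The closeness holds only in $L^2$ in time and only up to second derivatives. Assumption~\ref{Ass_thm}(ii) needs $L^\infty$-in-time bounds. Moreover, $C_{\mathrm{MD}}$ depends on:
\begin{itemize}
\item $\partial_t v_\theta$;
\item the Lipschitz constant of $\nabla_x(\nabla_x\cdot v_\theta)$, i.e.\ third derivatives (Lemma~\ref{lem:euler_consistency});
\item the time regularity of $\Phi$ used in the rectangle rule.
\end{itemize}
So the uniform bounds must be a hypothesis on the whole parameter class, comparator included, not only on the iterates. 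You should also assume $v_\theta\cdot\mathbf n=0$ on $\partial\Omega$, so that $\rho_\theta$ satisfies \eqref{Model_bc} and Theorem~\ref{KL_control} applies to $\rho_{\theta_k}$.

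\textbf{Step 3 is correct but too weak.} The available estimates give strong convergence, not just distributional convergence. The proof of Theorem~\ref{KL_control} cancels the relative Fisher information $\mathrm{II}=\int_\Omega\rho|\nabla_x\log(\rho/\rho^*)|^2dx$ exactly. If you instead split $\mathrm{I}$ and $\mathrm{III}$ by Young's inequality with weight $\tfrac14$, you keep half of it:
\[
\frac{d}{dt}D_{KL}(\rho_\theta\|\rho^*)+\frac12\,\mathrm{II}\le\int_\Omega\rho_\theta|v_\theta+\nabla_x\mu_\theta|^2\,dx+C\,D_{KL}(\rho_\theta\|\rho^*).
\]
Integrating and applying Gr\"onwall gives $\int_0^T\int_\Omega\rho_\theta|s_\theta-s^*|^2\,dx\,dt\le C'L_S(\theta)$. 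Insert this into your decomposition of $v_\theta-v^*$, together with your bound on the interaction term. This yields
\[
\int_0^T\int_\Omega\rho_\theta|v_\theta-v^*|^2\,dx\,dt\le C''L_S(\theta),
\]
a converse of Lemma~\ref{Lemma_LD}. With the uniform lower bound on $\rho_{\theta_k}$, this gives $v_{\theta_k}\to v^*$ in $L^2((0,T)\times\Omega)$ in probability. No test functions or integration by parts are needed. Your claim that the weighted distributional sense is ``the natural reading given the available estimates'' therefore undersells what the estimates give.
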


\begin{remark}[Discussion on Approximation and PL Conditions]
Assumption~(1) is supported by classical universal approximation results
and recent quantitative neural-network approximation theory in Sobolev
norms, including results for smooth activation functions such as $\tanh$
\cite{DeRyck2021,Girault2024, Hornik1991}, which provide approximation
guarantees for both functions and their derivatives. Assumption~(2) is
an optimization condition related to the Polyak--{\L}ojasiewicz (PL)
property. PL-type convergence guarantees have been established for
overparameterized neural networks under suitable conditions, with recent
advances further refining the corresponding optimization regimes
\cite{Banerjee2023,Liu2022}. We state Corollary~\ref{cor:convergence}
conditionally on these approximation and optimization assumptions,
rather than asserting that they hold for the neural-network loss
considered here in full generality.
\end{remark}

Next, we show how to establish the bounds for the three error terms appearing in \eqref{L_Decomp_1}. 
We first estimate the approximation error $\varepsilon_{\mathrm{app}}$.

\begin{lemma}\label{Lemma_LD}
Under the assumptions of Theorem \ref{Thm_total_error}, we have
\begin{equation*}
\begin{aligned}
\varepsilon_{\mathrm{app}}
\le\;&
C_S
\int_0^T
\|v_{\theta_S}(t,\cdot)-v^*(t,\cdot)\|^2_{W^{2,\infty}(\Omega)}
\,dt,
\end{aligned}
\end{equation*}
where $v^* = - \nabla_x \log \rho^* - \nabla_x V - \nabla_x( W \ast \rho^*)$, and $C_S > 0$ is independent of $N_p$ and $\Delta t$. 

\end{lemma}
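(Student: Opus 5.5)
Write $\theta=\theta_S$, $\rho_\theta=\rho_{\theta_S}$, $e:=v_\theta-v^*$, and $\mu^*:=\mu[\rho^*]$, so that $v^*=-\nabla_x\mu^*$. Adding and subtracting $v^*$ inside the integrand of $L_S$ gives
\[
v_\theta+\nabla_x\mu_\theta = e + \nabla_x(\log\rho_\theta-\log\rho^*) + \nabla_x W*(\rho_\theta-\rho^*).
\]
Hence
\[
\varepsilon_{\mathrm{app}}\le 3\int_0^T\!\!\int_\Omega \rho_\theta\Bigl(|e|^2+|\nabla_x\log\rho_\theta-\nabla_x\log\rho^*|^2+|\nabla_xW*(\rho_\theta-\rho^*)|^2\Bigr)dx\,dt .
\]
The first term is bounded by $M\int_0^T\|e\|_{L^\infty}^2dt$. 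By Young's inequality, the third term is bounded by $\|\nabla_x W\|_{L^\infty}^2\|\rho_\theta-\rho^*\|_{L^1}^2$. Pinsker's inequality (or Lemma \ref{L2_KL} together with boundedness of $\Omega$) turns this into a multiple of $D_{KL}(\rho_\theta\|\rho^*)$. So the task reduces to controlling the score error $\int\rho_\theta|s_\theta-s^*|^2$ and the KL divergence in terms of $\int_0^t\|e\|_{W^{2,\infty}}^2$.

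For the KL divergence, Lemma \ref{Lemma_KL_t} gives $\frac{d}{dt}D_{KL}=\int\rho_\theta(s_\theta-s^*)\cdot e\,dx$. This is bounded by $\tfrac12\int\rho_\theta|s_\theta-s^*|^2+\tfrac M2\|e\|_{L^\infty}^2$, so the KL divergence is slaved to the score error.

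For the score error, I would work along characteristics, using \eqref{ODE_s}. The learned score $s_\theta$ evolves by \eqref{ODE_s} with $v_\theta$. The exact score $s^*$, evaluated along the same trajectory $x_t$ generated by $v_\theta$, satisfies
\[
\frac{d}{dt}s^*(t,x_t)=\partial_ts^*+(\nabla_xs^*)v_\theta .
\]
Since $\rho^*$ solves \eqref{Ori_sys}, $s^*$ satisfies \eqref{general_H} with $v^*$, i.e.
\[
\partial_ts^*+(\nabla_xv^*)^\top s^*+(\nabla_x s^*)v^*=-\nabla_x(\nabla_x\cdot v^*).
\]
(Here I use that $\nabla_x s^*$ is symmetric, which makes the transport term consistent with \eqref{general_H}.) Subtracting the two evolutions, the difference $\delta_t:=s_\theta(t,x_t)-s^*(t,x_t)$ satisfies
\[
\dot\delta_t=-(\nabla_xv_\theta)^\top\delta_t-(\nabla_x e)^\top s^*-(\nabla_x s^*)e-\nabla_x(\nabla_x\cdot e),
\]
with $\delta_0=0$. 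Since $\nabla_xv_\theta\in L^\infty$ by Assumption \ref{Ass_thm}(ii), and $s^*,\nabla_xs^*$ are bounded by smoothness and strict positivity of $\rho^*$, Grönwall's inequality gives
\[
|\delta_t|^2\le C\int_0^t\|e\|_{W^{2,\infty}}^2\,ds
\]
uniformly in the starting point. Integrating against $\rho_\theta(t,\cdot)$, which is the push-forward of $\rho_0$ of mass $M$, yields
\[
\int\rho_\theta|s_\theta-s^*|^2dx\le CM\int_0^t\|e\|_{W^{2,\infty}}^2ds .
\]
This is exactly where the $W^{2,\infty}$ norm is needed, because of the term $\nabla_x(\nabla_x\cdot e)$.

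Combining the score estimate with the KL inequality and integrating in time bounds $D_{KL}(\rho_\theta\|\rho^*)(t)$ by $C\int_0^t\|e\|^2_{W^{2,\infty}}$. Inserting both estimates into the three-term split, and integrating over $[0,T]$ with $\int_0^T\int_0^t\le T\int_0^T$, gives the claim with $C_S$ depending on $T$, $M$, $\|\nabla_x W\|_{W^{1,\infty}}$, the smoothness bounds on $\rho^*$, and $\|\nabla_xv_\theta\|_{L^\infty}$, but not on $N_p$ or $\Delta t$.

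The main obstacle is the characteristic comparison step: rigorously justifying the evolution equation for $s^*$ along a trajectory generated by a different field. The delicate points are the symmetry of $\nabla_x s^*$ and the boundary behaviour, which is that trajectories stay in $\Omega$ under the no-flux condition. A second point needing care is that the constant from Grönwall depends on $\|\nabla_xv_{\theta_S}\|_{L^\infty}$, so the uniform bounds in Assumption \ref{Ass_thm}(ii) must be invoked to make $C_S$ independent of $\theta$.
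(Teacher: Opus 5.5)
Your proposal is correct, but it reaches the bound by a partly different route from the paper.

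For the score error you use the same error equation as the paper, $\partial_t e_s+(v_{\theta_S}\cdot\nabla)e_s+(\nabla v_{\theta_S})^\top e_s=-(e_v\cdot\nabla)s^*-(\nabla e_v)^\top s^*-\nabla(\nabla\cdot e_v)$. You integrate it along the $v_{\theta_S}$-characteristics and apply Gr\"onwall trajectory by trajectory. The paper instead differentiates $\int_\Omega|e_s|^2\rho_{\theta_S}\,dx$ in Eulerian form and absorbs the transport term through the continuity equation and the no-flux condition. That weighted integral is just the $\rho_0$-average of your $|\delta_t|^2$, so the two arguments are equivalent in substance; yours gives a slightly stronger, uniform-in-trajectory bound.

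The genuine difference is in the interaction term. The paper derives a separate $L^2$ energy estimate for $e_\rho=\rho_{\theta_S}-\rho^*$ from the difference of the two continuity equations. It closes this with a second Gr\"onwall argument and combines it with $\|\nabla W*e_\rho\|_{L^2}\le\|\nabla W\|_{L^1}\|e_\rho\|_{L^2}$. This requires a uniform bound on $\|\rho_{\theta_S}\|_{L^\infty}$, obtained from the characteristic formula, and bounds on $\|\rho^*\|_{L^\infty}$ and $\|\nabla\rho^*\|_{L^\infty}$.

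You instead use three ingredients:
\begin{itemize}
\item the pointwise bound $|\nabla W*e_\rho|\le\|\nabla W\|_{L^\infty}\|e_\rho\|_{L^1}$;
\item Pinsker's inequality in the equal-mass form $\|e_\rho\|_{L^1}^2\le 2M\,D_{KL}(\rho_{\theta_S}\|\rho^*)$;
\item the identity of Lemma~\ref{Lemma_KL_t}, which shows that $D_{KL}$ is driven only by the score error and $e_v$.
\end{itemize}
So no Gr\"onwall step is needed for the density part, and only the mass $M$ of $\rho_{\theta_S}$ enters (up to a harmless factor $M$ you dropped when integrating against $\rho_{\theta_S}$).

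Your route is shorter, needs fewer bounds on $\rho^*$ and $\rho_{\theta_S}$, and reuses the relative-entropy structure behind Theorem~\ref{KL_control}. The paper's route has one advantage: in this lemma the kernel enters only through $\|\nabla W\|_{L^1}$. That quantity stays bounded for the blob-regularized Keller--Segel kernel as $\varepsilon\to0$, whereas your constant carries $\|\nabla W_\varepsilon\|_{L^\infty}\sim\varepsilon^{-1}$.
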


Finally, we estimate the Monte-Carlo and discretization error $\varepsilon_{\mathrm{MD}}$.
The quantity $\varepsilon_{\mathrm{MD}}$ involves the empirical average over the $N_p$
randomly sampled particles $\{x_0^{(n)}\}_{n=1}^{N_p}$, as well as the empirical
approximation of the interaction force in \eqref{W_app}, and is therefore itself a random
variable. We control $\varepsilon_{\mathrm{MD}}$ with high probability, via a concentration
inequality; we denote by $\delta \in (0,1)$ the associated failure probability.

\begin{lemma}\label{Lemma_MC}
Under the assumptions of Theorem \ref{Thm_total_error}, for any $\delta \in (0,1)$, with
probability at least $1-\delta$,
\begin{equation*}
\varepsilon_{\mathrm{MD}}
\le
C_{\mathrm{MD}}
\left(
\sqrt{\frac{\log(N_p N_t/\delta)}{N_p}}
\;+\;
\Delta t
\right),
\end{equation*}
where $N_t = T/\Delta t$, and $C_{\mathrm{MD}}>0$ is independent of $N_p$ and $\Delta t$.
\end{lemma}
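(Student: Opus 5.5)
We bound each of the two brackets in $\varepsilon_{\mathrm{MD}}$ by the same estimate, uniform over the parameters allowed by Assumption~\ref{Ass_thm}. Fix such a $\theta$ (either $\theta_k$, evaluated on an independent batch, or the deterministic $\theta_S$). Write the population integrand along exact characteristics as
\[
g_\theta(t,x_0):=\bigl|v_\theta(t,x_t)+s_\theta(t,x_t)+\nabla V(x_t)+\nabla(W*\rho_\theta)(t,x_t)\bigr|^2, \qquad x_t=x_t(x_0),
\]
so that $L_S(\theta)=\int_0^T\int_\Omega g_\theta(t,x_0)\rho_0(x_0)\,dx_0\,dt$, using the pushforward identity $\rho_\theta(t)=(x_t)_\#\rho_0$. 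We then insert two intermediate quantities. The first is the time-discretized population loss $\widetilde L(\theta)=\sum_j\Delta t\int g_\theta(t_j,x_0)\rho_0\,dx_0$. The second is the empirical loss $\widehat L(\theta)=\frac{M}{N_p}\sum_{n,j}\Delta t\,g_\theta(t_j,x_0^{(n)})$, which uses exact trajectories, exact scores and the exact convolution. This splits $|L_S-L_D|\le |L_S-\widetilde L|+|\widetilde L-\widehat L|+|\widehat L-L_D|$.

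\textbf{Quadrature and Euler errors (the $\Delta t$ terms).} For $|L_S-\widetilde L|$ it suffices to show that $t\mapsto g_\theta(t,x_0)$ is Lipschitz, uniformly in $x_0$.
\begin{itemize}
\item Along characteristics, $x_t$ is Lipschitz with constant $\|v_\theta\|_\infty$.
\item The score $s_t$ solves the linear ODE \eqref{ODE_s} with bounded coefficients $\nabla v_\theta$ and $\nabla(\nabla\cdot v_\theta)$. By Gr\"onwall, $|s_t|\le (|s_0|+T\|\nabla\nabla\cdot v_\theta\|_\infty)e^{T\|\nabla v_\theta\|_\infty}$, with $s_0$ bounded by the smoothness and positivity of $\rho_0$. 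Hence $|\dot s_t|$ is bounded as well.
\item $v_\theta$ is Lipschitz in $t$ and $x$, and $\nabla V$ is Lipschitz.
\item $\nabla(W*\rho_\theta)(t,x_t)=\int\nabla W(x_t(x_0)-x_t(y_0))\rho_0(y_0)\,dy_0$ is Lipschitz in $t$ because $\nabla W\in W^{1,\infty}$.
\end{itemize}
Since all factors are bounded, $g_\theta$ is Lipschitz and the left-endpoint rule gives $O(\Delta t)$.

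For $|\widehat L-L_D|$ we compare the forward Euler iterates $(x^{(n)}_{t_j},s^{(n)}_{t_j})$ with the exact values. The vector field $(x,s)\mapsto(v_\theta,-(\nabla v_\theta)^\top s-\nabla\nabla\cdot v_\theta)$ is Lipschitz on the bounded set where $s$ lives, which requires $\nabla^2 v_\theta$ bounded; this is implied in Assumption~\ref{Ass_thm}(ii) via $W^{2,\infty}$ control, and we adopt it explicitly. It is also Lipschitz in $t$. The standard global Euler estimate then gives $\max_j(|x^{(n)}_{t_j}-x_{t_j}|+|s^{(n)}_{t_j}-s_{t_j}|)\le C\Delta t$, deterministically for every particle. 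Since $|a|^2-|b|^2=(a-b)\cdot(a+b)$ and all quantities are bounded, each summand changes by $O(\Delta t)$, except for the interaction term.

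\textbf{Monte Carlo terms (the $\sqrt{\log/N_p}$ terms).} Two random errors remain.
\begin{itemize}
\item \emph{Interaction force.} Compare $F^{(n)}_{t_j}$, computed at exact positions, with $\nabla(W*\rho_\theta)(x^{(n)}_{t_j})$. Conditioning on $x_0^{(n)}$, this is an average of the $N_p-1$ i.i.d.\ bounded variables $M\nabla W(x_{t_j}^{(n)}-x_{t_j}^{(m)})$ with mean $\nabla (W*\rho_\theta)$. The diagonal term $m=n$ contributes $O(1/N_p)$.
\item \emph{Empirical loss.} The difference $\widehat L-\widetilde L$ is, at each $t_j$, an average of i.i.d.\ bounded variables $g_\theta(t_j,x_0^{(n)})$.
\end{itemize}
Hoeffding's inequality applied to each of the $N_pN_t$ (particle, time) pairs, together with a union bound, gives deviations at most $C\sqrt{\log(N_pN_t/\delta)/N_p}$ simultaneously, with probability $1-\delta/2$. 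Summing over $j$ with weight $\Delta t$ keeps a factor $T$. Position errors from Euler enter $F$ only through the Lipschitz constant of $\nabla W$, giving $O(\Delta t)$. Finally, we repeat the argument for $\theta_S$ and take a union bound, for total failure probability $\delta$.

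\textbf{Main obstacle.} The delicate point is that all constants must be independent of $\theta$, $N_p$ and $\Delta t$, while the Gr\"onwall bound for $s_t$ and the Euler stability constants depend exponentially on $\|\nabla v_\theta\|_\infty$ and $\|\nabla\nabla\cdot v_\theta\|_\infty$. I would handle this by interpreting Assumption~\ref{Ass_thm}(ii) as a \emph{uniform} bound over the parameter class, so that $C_{\mathrm{MD}}$ depends only on these uniform bounds, $T$, $M$, $\rho_0$, $V$ and $W$.

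A second subtlety is the independence needed for concentration. For $\theta_k$, this is provided by the independent evaluation batch stated after \eqref{L_Decomp_1}. For $\theta_S$, it holds because $\theta_S$ is deterministic. This avoids any uniform-in-$\theta$ covering argument.
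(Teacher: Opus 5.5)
Your proposal is correct and follows essentially the paper's proof: the same three-way split into a left-endpoint quadrature error, a Hoeffding/union-bound Monte Carlo error for both the outer particle average and the pairwise interaction force (using the independent batch for $\theta_k$ and the determinism of $\theta_S$), and an $O(\Delta t)$ Euler consistency error. The only bookkeeping difference is that your intermediate loss uses the exact convolution, so the force-sampling error lands in your last term rather than in the paper's $\hat L_D-\hat L_S$; note also that Lipschitz continuity of $x\mapsto\nabla_x(\nabla_x\cdot v_\theta)$ requires third-order derivative bounds, not just $\nabla^2 v_\theta$, and this is not implied by Assumption~\ref{Ass_thm}(ii). It is an extra hypothesis, exactly as the paper adds in Lemma~\ref{lem:euler_consistency}.
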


We are now ready to prove Theorem \ref{Thm_total_error}.

\begin{proof}[Proof of Theorem \ref{Thm_total_error}]
By the error decomposition \eqref{L_Decomp_1}, 
\begin{equation*}
L_S(\theta_k)
\le
\varepsilon_{\mathrm{app}}
+
\varepsilon_{\mathrm{opt}}
+
\varepsilon_{\mathrm{MD}}.
\end{equation*}
The approximation error is bounded in Lemma \ref{Lemma_LD} as
\begin{equation*}
\varepsilon_{\mathrm{app}}
\le
C_S
\int_0^T
\|v_{\theta_S}(t,\cdot)-v^*(t,\cdot)\|^2_{W^{2,\infty}(\Omega)}
\,dt.
\end{equation*}
Combining this estimate with Lemma \ref{Lemma_MC}, we obtain, with probability at least
$1-\delta$,
\begin{equation*}
L_S(\theta_k)
\le
C_S
\int_0^T
\|v_{\theta_S}(t,\cdot)-v^*(t,\cdot)\|^2_{W^{2,\infty}(\Omega)}
\,dt
+
\varepsilon_{\mathrm{opt}}
+
C_{\mathrm{MD}}
\left(
\sqrt{\frac{\log(N_p N_t/\delta)}{N_p}}
\;+\;
\Delta t
\right).
\end{equation*}
This establishes the desired error bound and completes the proof.
\end{proof}

\section{Numerical experiments and results}
In this section, we present a series of numerical experiments to evaluate the proposed neural flow-matching method. We first examine its numerical convergence and free-energy dissipation behavior, and then consider several nonlinear and nonlocal aggregation-- diffusion models to demonstrate the applicability of the proposed framework. Finally, we investigate its performance in higher-dimensional settings.

\begin{example}\label{DOI}
We first consider the one-dimensional Doi--Onsager equation with the Maier--Saupe interaction potential
\begin{equation*}
\partial_t \rho
=
\partial_x
\left(
\partial_x \rho
+
\alpha \rho \partial_x(W * \rho)
\right),
\qquad
t>0,
\quad
x\in[0,2\pi],
\end{equation*}
where the interaction kernel is given by
\begin{equation*}
    W(x)=\sin^2(x).
\end{equation*}

The initial condition is chosen as
\begin{equation*}
    \rho(x,0)
    =
    \frac{x+1}{2\pi(\pi+1)},
\end{equation*}
subject to zero-flux boundary conditions. The interaction strength is taken as
$\alpha = 5,$ and the time step size is $\Delta t = 0.01.$ The neural velocity field is parameterized by
\begin{equation*}
    v_\theta(t,x)
    =
    x(2\pi-x)\mathcal N_\theta(t,x),
\end{equation*}
which approximately enforces the no-flux structure near the boundary.

We employ the multi-stage training strategy described in Appendix~\ref{AppendixB}.
Figure~\ref{fig:multistage_doi_onsager} shows the multi-stage training loss
together with the evolution of the recovered density at different time instances.

Figure~\ref{fig:doi_dt} examines the effects of temporal discretization and optimization error.
With $5000$ optimizaiton iterations, the $L^1$ and $L^2$ errors converge approximately at 
first-order in $\Delta t$, while the KL error decays at approximately second-order. For smaller optimization budgets, the errors tend to level off as $\Delta t$ decreases, indicating dominance of the optimization error. 

Figure~\ref{fig:doi_Np} shows the dependence on the  particle numbers $N_p$. All three errors decrease with $N_p$, with faster improvement for larger optimization budgets. For $5000$ iterations, the observed rates are consistent
with $N_p^{-1/2}$ for the $L^1$ and $L^2$ errors and $N_p^{-1}$ for the KL error. The error bars show robustness with respect to random initialization.
\end{example}

\begin{figure}[htbp]
    \centering
    \begin{minipage}{0.45\textwidth}
        \centering
        \includegraphics[width=\linewidth]{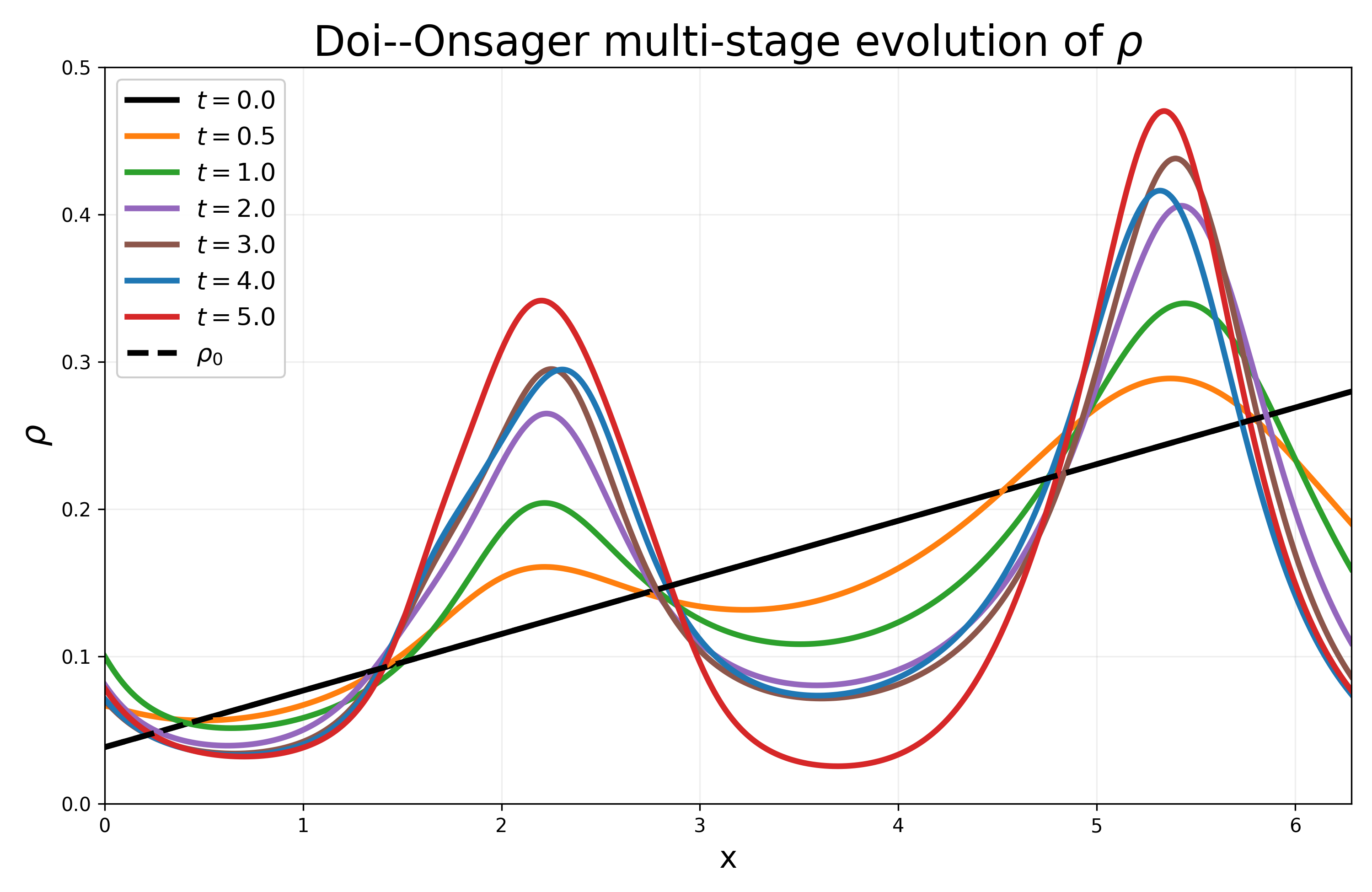}
    \end{minipage}
    \hfill
    \begin{minipage}{0.45\textwidth}
        \centering
        \includegraphics[width=\linewidth]{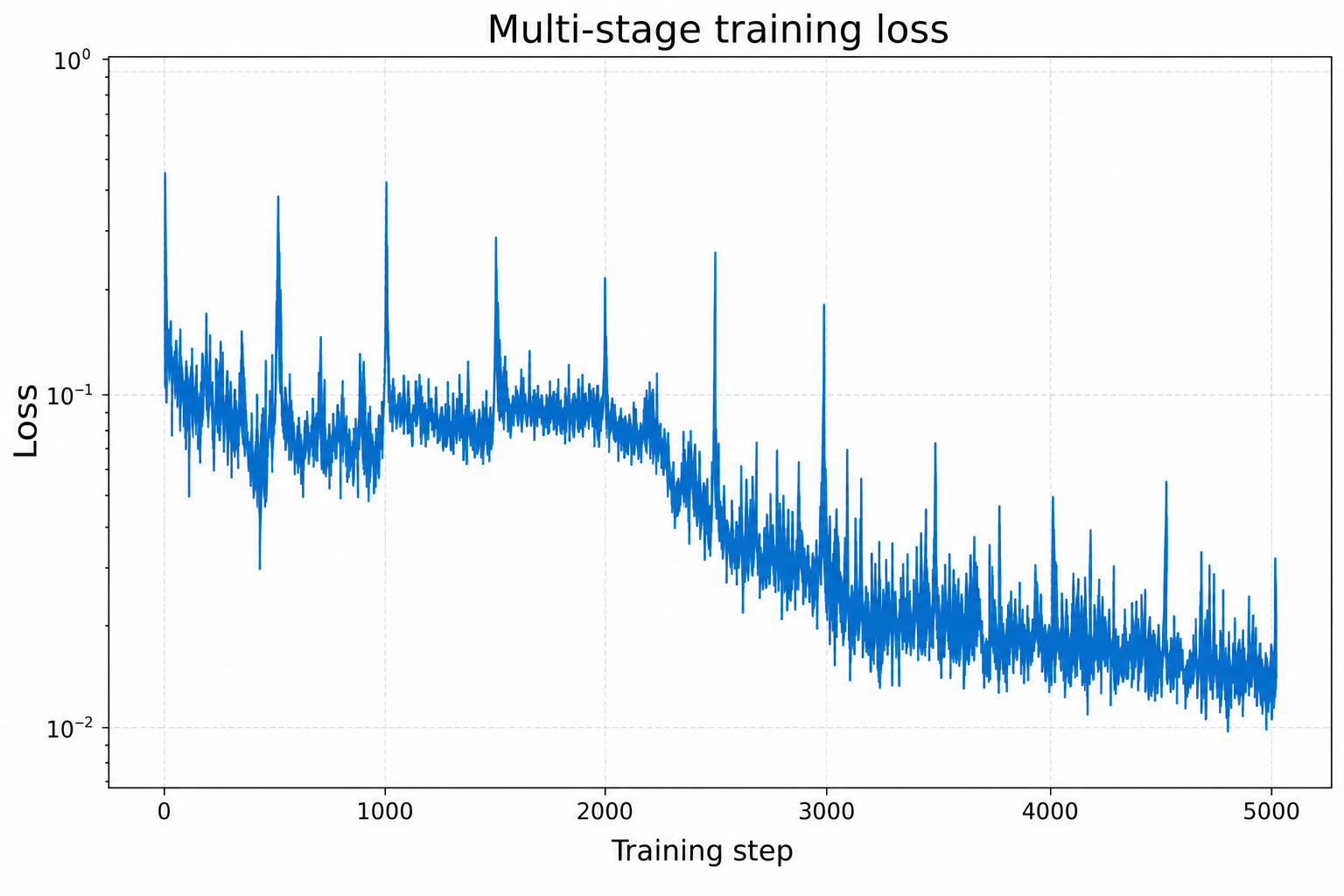}
    \end{minipage}

    \caption{
    Evolution of $\rho$ and multi-stage training loss for
    Example~\ref{DOI}.
    }
    \label{fig:multistage_doi_onsager}
\end{figure}

\begin{figure}[htbp]
    \centering
    \includegraphics[width=0.82\textwidth]{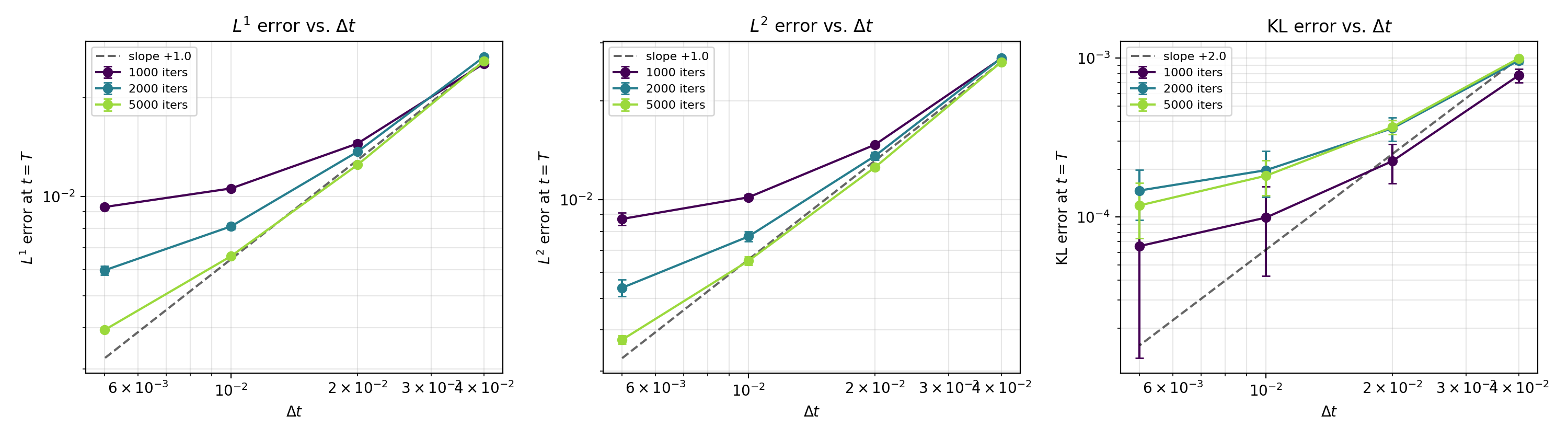}
    \caption{
    Errors with respect to $\Delta t$ for Example~\ref{DOI}.
    }
    \label{fig:doi_dt}
\end{figure}

\begin{figure}[htbp]
    \centering
    \includegraphics[width=0.82\textwidth]{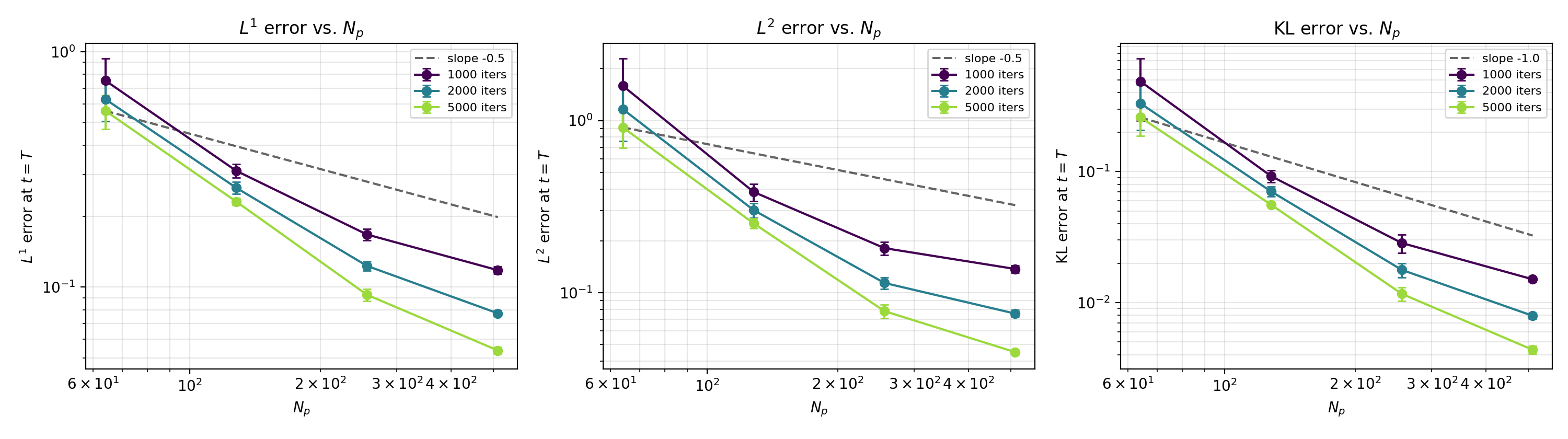}
    \caption{
    Errors with respect to $N_p$ for Example~\ref{DOI}.
    }
    \label{fig:doi_Np}
\end{figure}

\begin{example}\label{PKS_2d_sub}
(Patlak--Keller--Segel system)
In this example, we test the proposed framework on the two-dimensional Patlak--Keller--Segel system, which models cell aggregation driven by chemotaxis and has been considered in various numerical studies \cite{CLASSD10liu_positive_2020,CT26, MA14}. Consider
\begin{equation*}
\partial_t \rho
=
\nabla_x \cdot
\left(
\nabla_x \rho
+
\rho \nabla_x (W * \rho)
\right),
\qquad
t>0,
\quad
x \in \Omega,
\end{equation*}
where
\begin{equation*}
W(x)
=
\frac{\chi}{2\pi}
\log |x|,
\qquad
x \in \mathbb{R}^2.
\end{equation*}

The interaction force is therefore given by
\begin{equation*}
\nabla_x W(x)
=
\frac{\chi}{2\pi}
\frac{x}{|x|^2}.
\end{equation*}

We consider the bounded domain $ \Omega = [-5,5]\times[-5,5]$, subject to zero-flux boundary conditions. In the experiments, we set $\chi=1$ for the sub-critical case and $\chi=30$ for the super-critical case. The initial density is chosen as a smooth approximation of the characteristic function on
\(
[-1,1]\times[-1,1]
\),
namely
\begin{equation*}
\rho_0(x,y)
=
\varepsilon
+
C_0
\sigma(k(x+1))
\sigma(k(1-x))
\sigma(k(y+1))
\sigma(k(1-y)),
\end{equation*}
where
\(
\sigma(z)=1/(1+e^{-z})
\)
denotes the sigmoid function, \(k>0\) controls the sharpness of the transition layer, and the normalization constant \(C_0\) is chosen such that $\int_\Omega \rho_0(x,y)\,dxdy = 1$.

We employ the multi-stage neural transport algorithm together with the score ODE evolution to propagate the density and recover the solution profile at different time instances. The resulting density evolutions in the sub-critical and super-critical regimes are shown in Figures \ref{fig:pks_2d_evolution1} and \ref{fig:pks_2d_evolution2}, respectively.

{For the super-critical case ($t\in [0, 0.5]$), 
the computation exhibits increasingly strong concentration prior to blow up, rather than post-blow-up dynamics. 
Near-field singular forces are  
evaluated using the blob-regularized kernel (Remark~5.3), with self-interaction vanishing at $x=0$.}
\end{example}

\begin{figure}[htbp]
    \centering
    \includegraphics[width=0.8\textwidth]{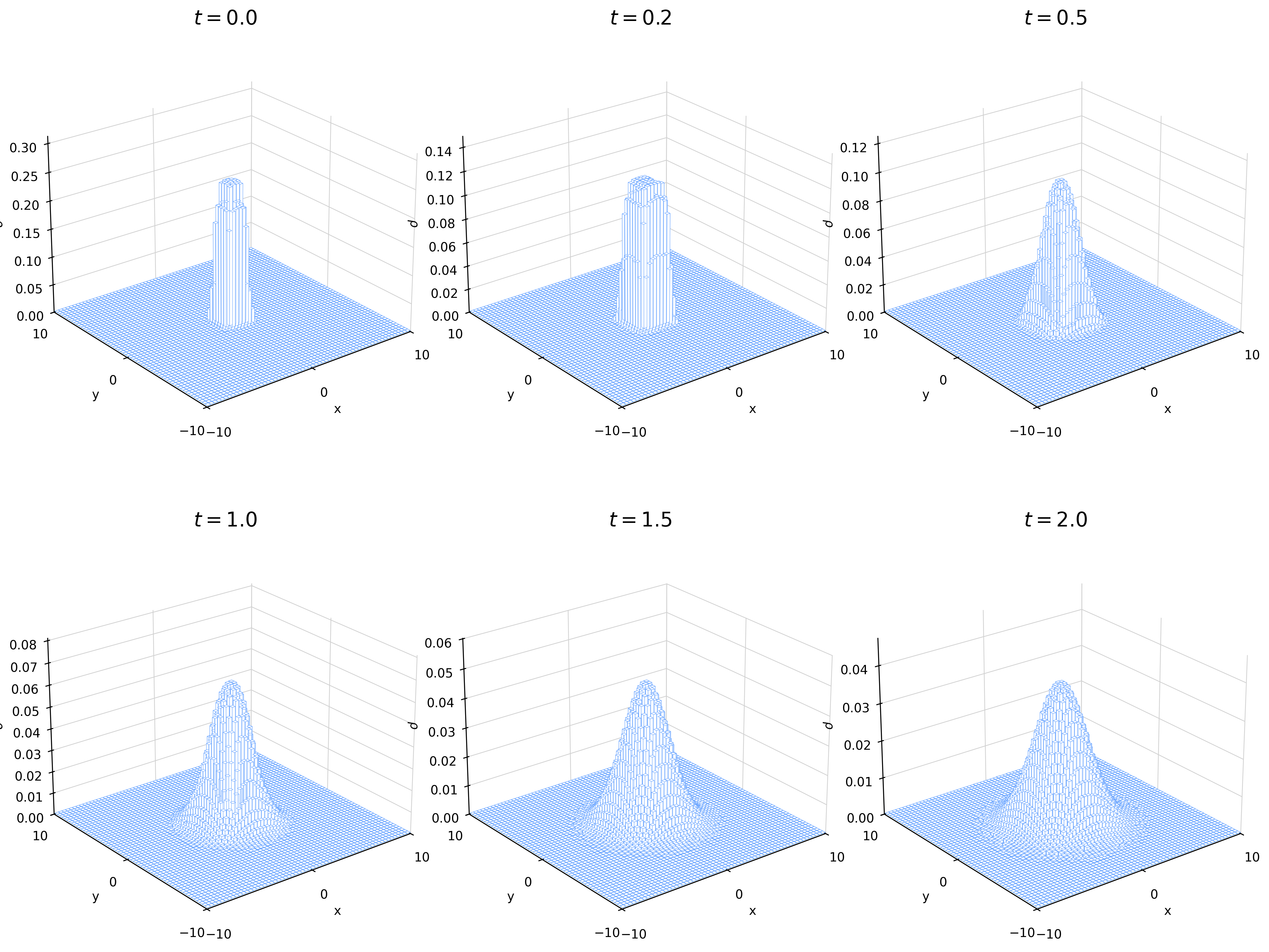}
    \caption{
    Evolution of  \(\rho\) for two-dimensional Patlak--Keller--Segel system for Example \ref{PKS_2d_sub} (sub-critical).}
    \label{fig:pks_2d_evolution1}
\end{figure}

\begin{figure}[htbp]
    \centering
    \includegraphics[width=0.8\textwidth]{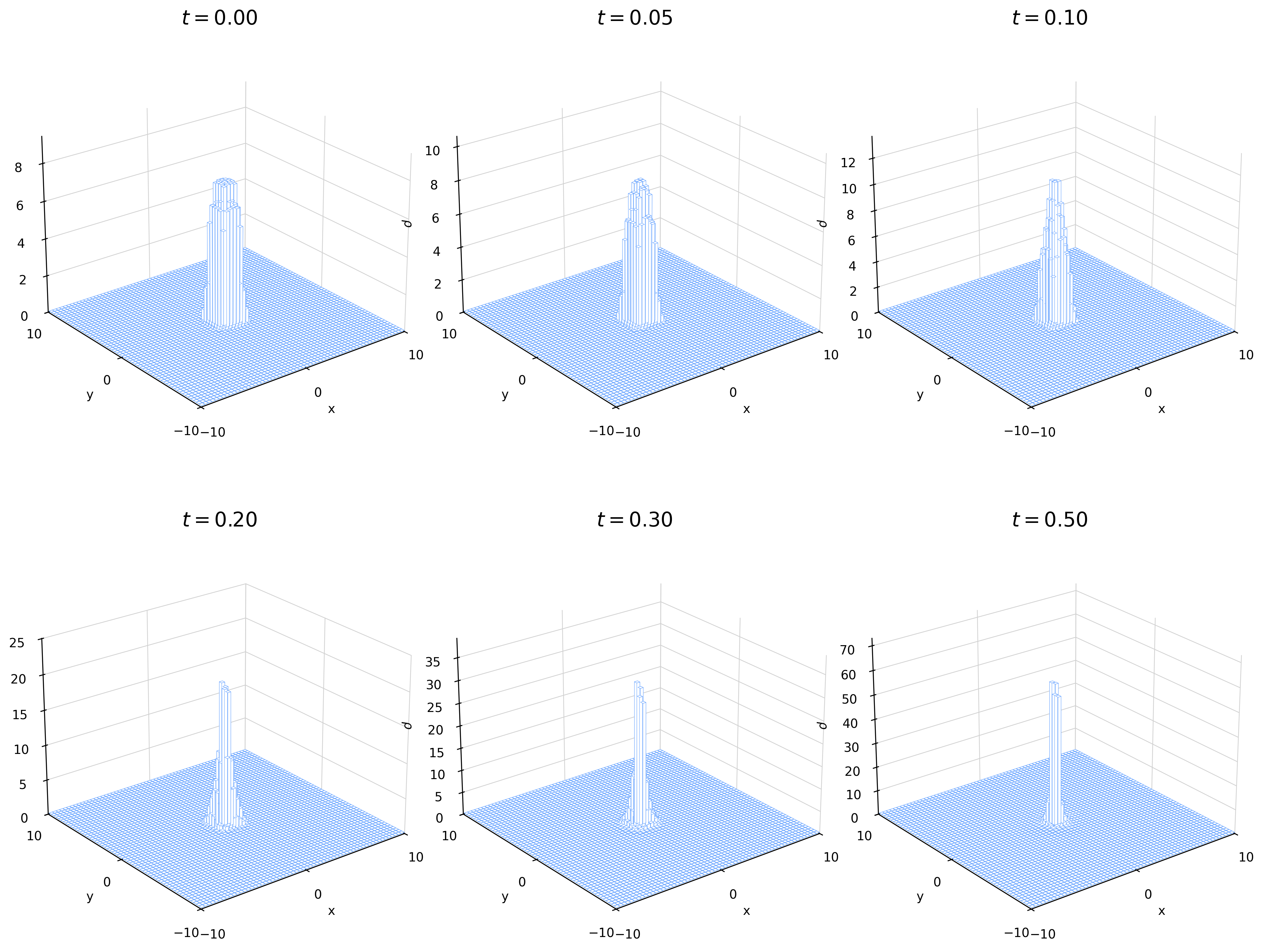}
    \caption{
    Evolution of  \(\rho\) for two-dimensional Patlak--Keller--Segel system for Example \ref{PKS_2d_sub} (super-critical).}
    \label{fig:pks_2d_evolution2}
\end{figure}

\begin{example}\label{PME_M3}
The porous medium equation provides an example of nonlinear diffusion. We consider the following one-dimensional case to further examine the applicability of the proposed method. Specifically,
\begin{equation*}
\partial_t \rho
=
\partial_x
\left(
\rho \partial_x \rho^2
\right),
\qquad
x\in[-6, 6] ,
\quad
t>0.
\end{equation*}

The exact solution is given by the Barenblatt-type profile
\begin{equation*}
\rho(t,x)
=
(t+1)^{-\frac14}
\left(
2
-
\frac18 x^2 (t+1)^{-\frac12}
\right)_+^{\frac12}.
\end{equation*}
The solution and training loss are presented in Figure \ref{fig:PME}.
\end{example}
\begin{figure}[htbp]
    \centering
    \begin{minipage}{0.45\textwidth}
        \centering
        \includegraphics[width=\linewidth]{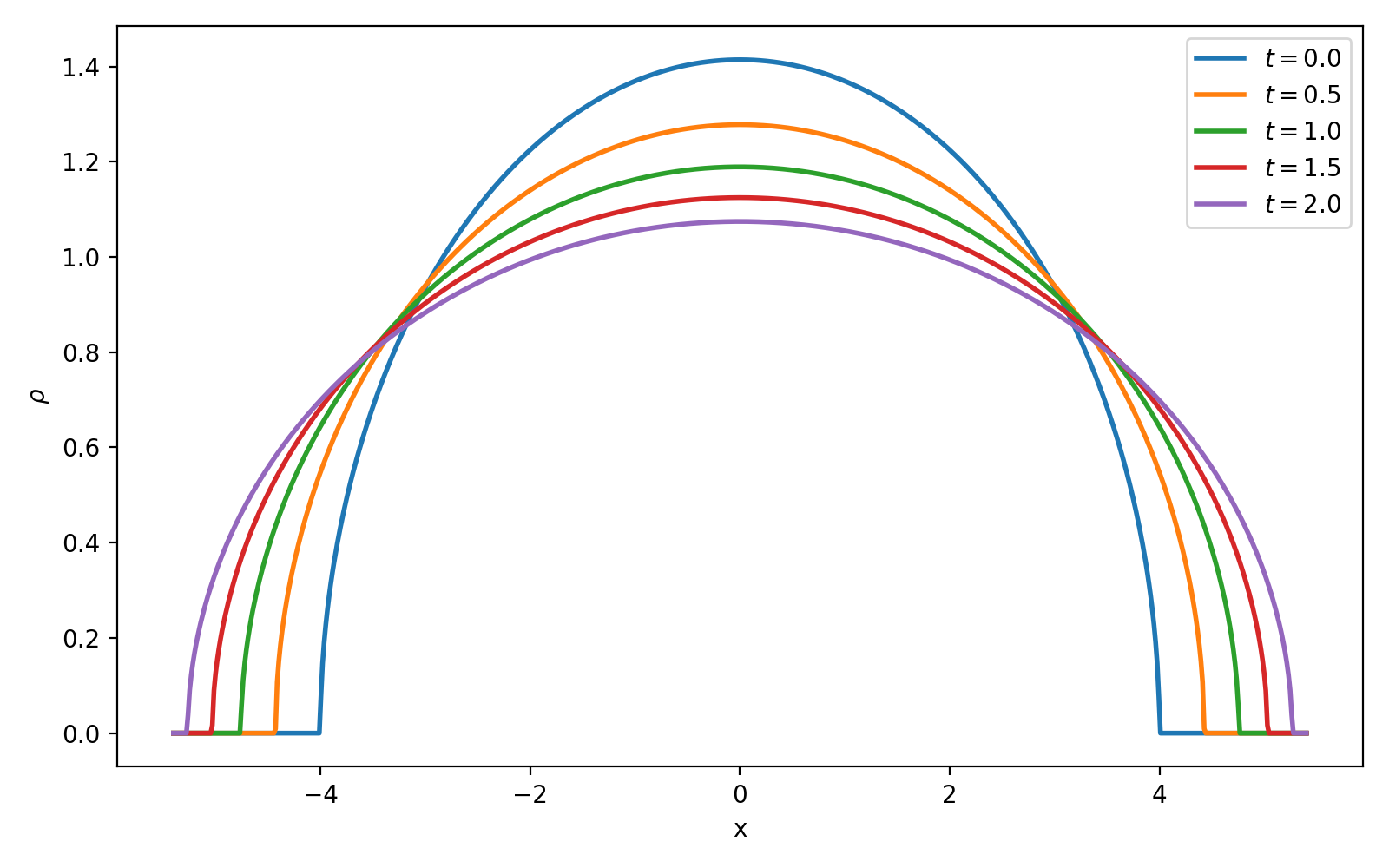}
    \end{minipage}
    \hfill
    \begin{minipage}{0.45\textwidth}
        \centering
        \includegraphics[width=\linewidth]{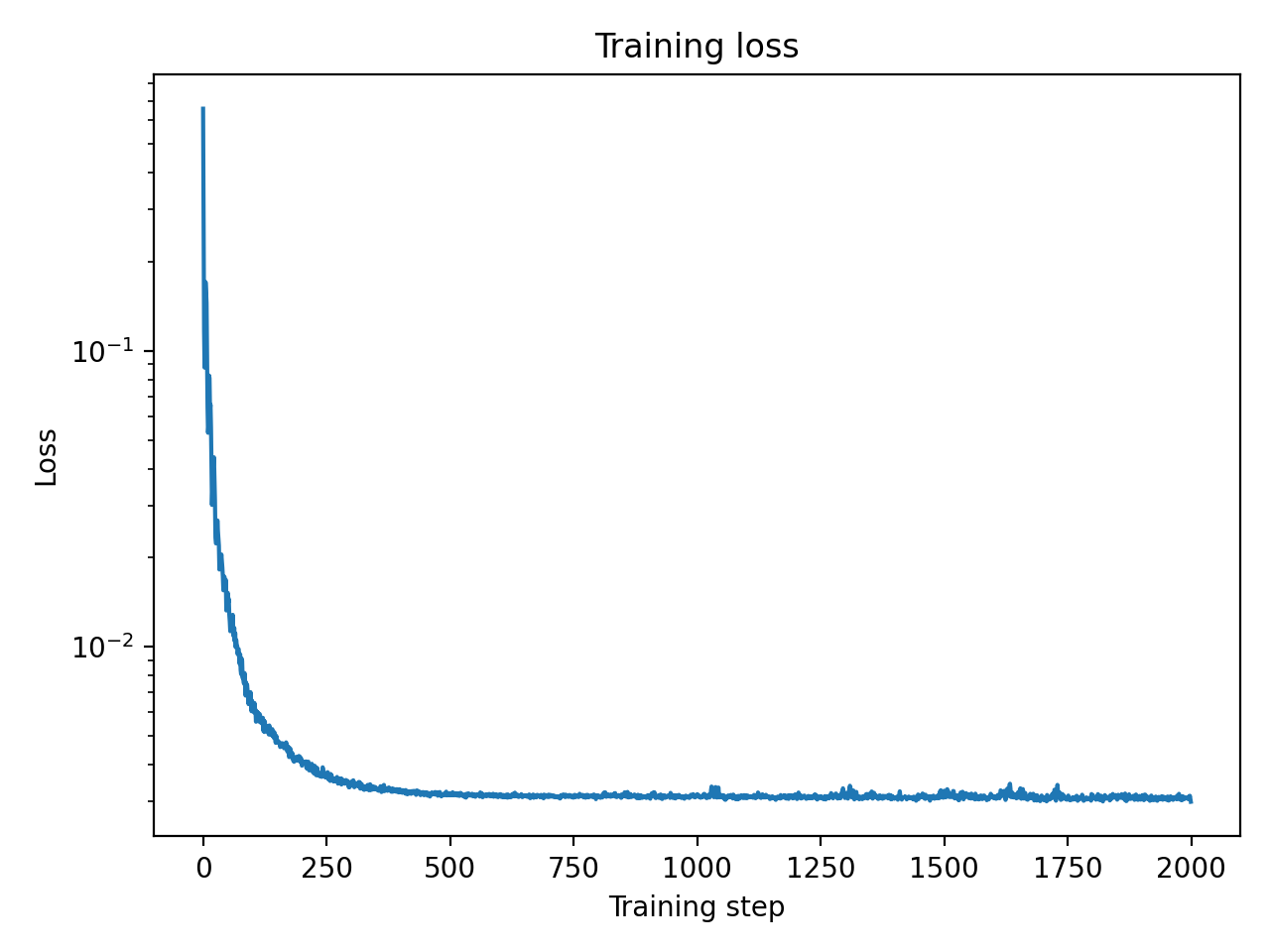}
    \end{minipage}

    \caption{
    Evolution of $\rho$ and training loss for Example \ref{PME_M3}.
    }
    \label{fig:PME}
\end{figure}

\begin{example}[3D and 10D Ornstein--Uhlenbeck] \label{ex:ou3d}
We next consider the Ornstein--Uhlenbeck problem in three and ten
dimensions to investigate the performance of the proposed method
in higher-dimensional settings. Taking
$H(\rho)=\rho\log\rho-\rho$, so that $H'(\rho)=\log\rho$,
$V(x)=\tfrac12|x|^2$, and $W\equiv0$,
equation \eqref{Ori_sys} reduces to the linear Fokker--Planck equation 
\[
  \partial_t\rho=\nabla_x\cdot(\nabla_x\rho+x\rho),
  \qquad x\in[-6, 6]^d,\ t\in[0,1],
  \qquad d=3,10.
\]
The initial density is an isotropic Gaussian
\[
  \rho_0(x)=\mathcal N(\mu_0,\sigma_0^2 I_d),
  \qquad \mu_0=1.5\,\mathbf{1}_d,\ \sigma_0=0.5.
\]
Since $V$ is quadratic, $\rho(t,\cdot)$ remains an isotropic Gaussian for all $t\ge0$:
writing $\rho(t,\cdot)=\mathcal N(\mu(t),\sigma^2(t)I_d)$, the mean and variance evolve as
\[
  \mu(t)=\mu_0e^{-t},
  \qquad
  \sigma^2(t)=1+(\sigma_0^2-1)e^{-2t},
\]
with the corresponding score and reference velocity
\[
  s(t,x)=\nabla_x\log\rho(t,x)=-\frac{x-\mu(t)}{\sigma^2(t)},
  \qquad
  v^*((t,x)=-s(t,x)-\nabla_x V(x)=\frac{x-\mu(t)}{\sigma^2(t)}-x.
\]
We take $\Delta t=0.01$, $N_p=512$ particles per batch, and $N_{\mathrm{step}}=2000$
Adam iterations with learning rate $10^{-3}$. The network consists of
four hidden layers of width $128$ with $\tanh$ activation functions.

For the three-dimensional case, because $\rho(t,\cdot)$ is isotropic, we assess accuracy through the radial
distance $r=|x-\mu(t)|$ of the recovered particles, whose exact density is
\[
  f(r, t)=\sqrt{\frac{2}{\pi}}\,\frac{r^2}{\sigma(t)^3}
        \exp\!\left(-\frac{r^2}{2\sigma^2(t)}\right).
\]
Figure~\ref{fig:ou3d} shows the training loss together with the recovered
radial distributions and the corresponding exact density $f(r, t)$ at $t=0,0.25,0.5,1$.

For the ten-dimensional case, since the ten-dimensional density cannot be visualized directly, we present
representative two-dimensional views in the coordinate pairs
$(x_1,x_2)$ and $(x_9,x_{10})$.
Figures~\ref{fig:ou10d_x1x2} and~\ref{fig:ou10d_x9x10} compare the learned
particle distributions and velocity fields with the exact solutions at
$t=0$, $0.25$, $0.5$, and $1$.

Figure~\ref{fig:dim_Np} and Table~\ref{tab:dim_Np} report the wall-clock
time per training iteration, the peak GPU memory, and the accuracy for
$d=1,\dots,15$ and $N_p\in\{128,256, 512\}$, with $\Delta t=0.01$, the network
architecture, and 2000 Adam iterations kept fixed. The time per iteration
stays between 1.2 and 1.5\,s over the whole range and shows no systematic
dependence on $d$ or $N_p$: at this problem size the GPU is not saturated,
and the cost of an iteration is dominated by the $N_t$ sequential time
steps, whereas the arithmetic work per step scales as $O(N_p d)$. The peak
memory grows linearly in both $d$ and $N_p$, approximately as
$1.6\times10^{-3}\,dN_p+1.4\times10^{-3}\,N_p$\,GB, since the computational
graph of the rollout, including $\nabla_x v_\theta$ and
$\nabla_x(\nabla_x\cdot v_\theta)$, is stored for back-propagation.
Neither the cost nor the memory exhibits the exponential growth in $d$
of grid-based solvers.

As shown in the bottom row of Figure~\ref{fig:dim_Np}, for $d\le 3$ the
relative $L^1$ and $L^2$ errors of the density are below $10^{-2}$ and
essentially independent of $N_p$. With $N_p$ fixed, the errors increase
with $d$. The relative residual of $v_\theta$ on the training particles
remains at a few percent in all dimensions, while the error on fresh
particles is up to five times larger at $N_p=128$; the dominant error in
high dimension is therefore the finite-sample generalization error rather
than the optimization or time discretization error. Increasing $N_p$
reduces this error substantially, and the reduction grows with $d$: from
$N_p=128$ to $N_p=512$, the relative $L^1$ error decreases from
$2.7\times10^{-1}$ to $9.6\times10^{-2}$ at $d=10$ and from
$7.5\times10^{-1}$ to $1.8\times10^{-1}$ at $d=15$. Since the memory grows
only linearly in $N_p$, the accuracy in high dimension can be improved at
a cost that grows only linearly.
\end{example}

\begin{figure}[htbp]
\centering
\begin{minipage}{0.4\textwidth}
  \centering
  \includegraphics[width=\linewidth]{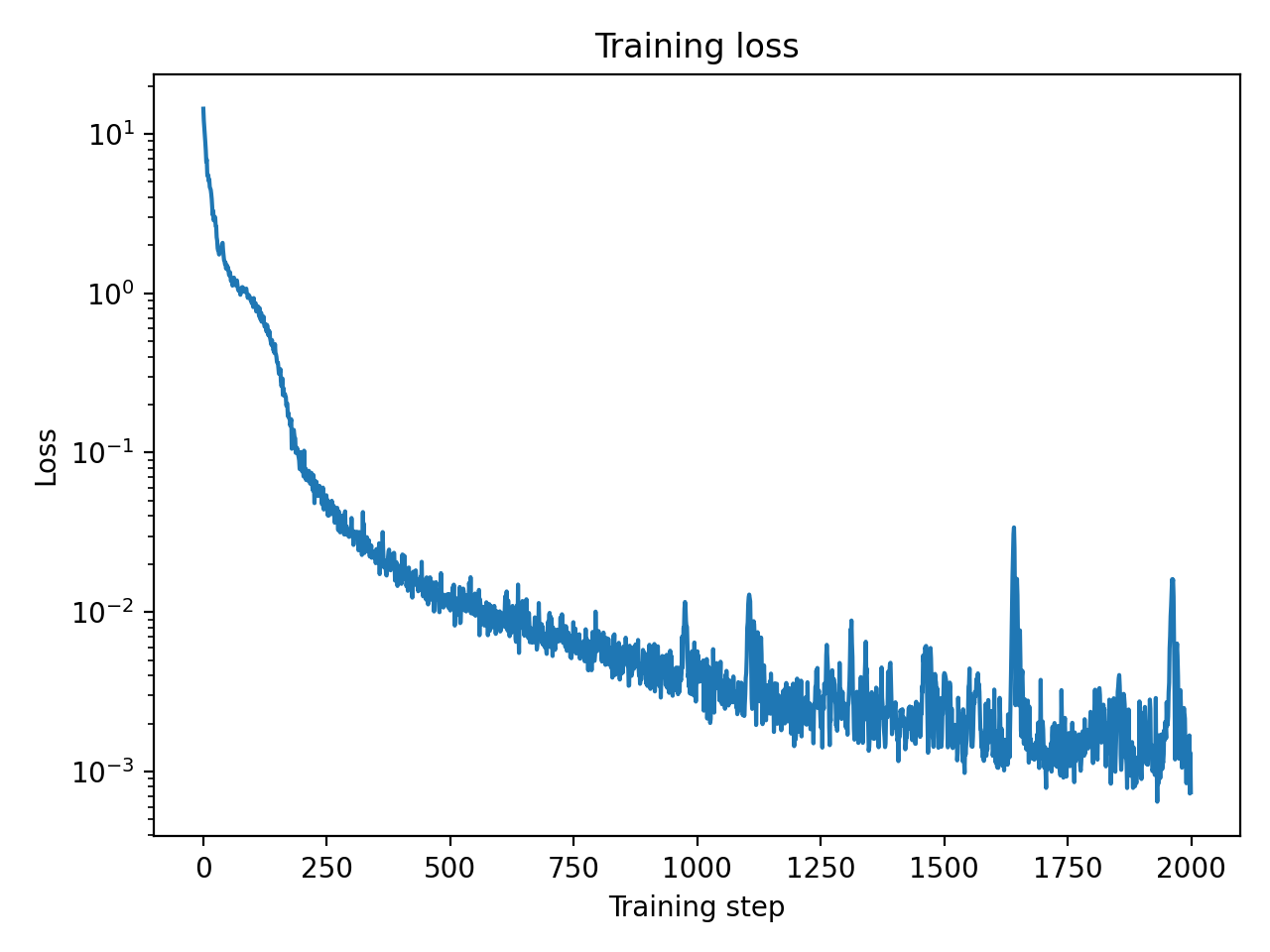}
  \subcaption{Training loss}
\end{minipage}
\begin{minipage}{0.4\textwidth}
  \centering
  \includegraphics[width=\linewidth]{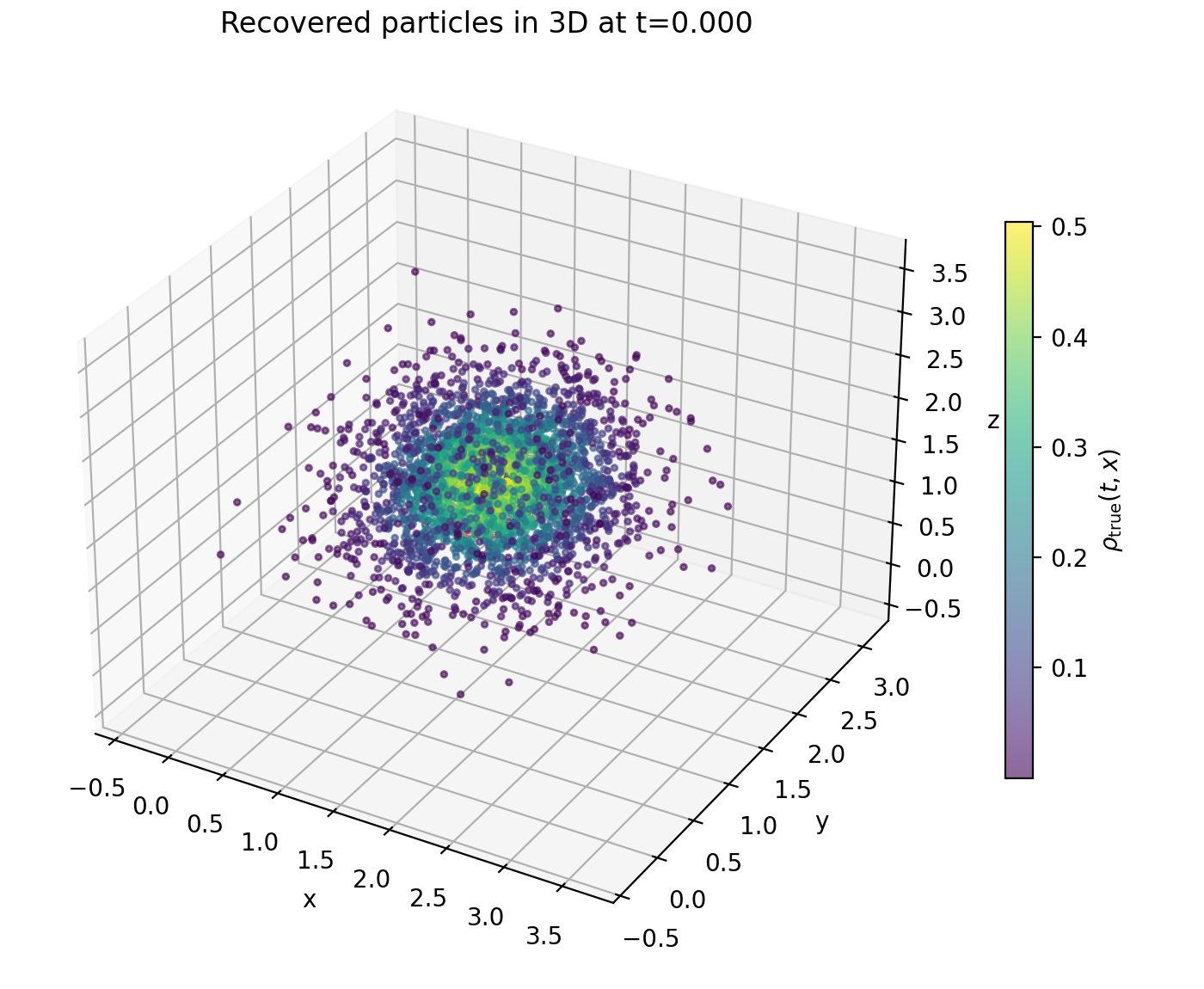}
  \subcaption{$t=0$}
\end{minipage}\\[1ex]
\begin{minipage}{0.4\textwidth}
  \centering
  \includegraphics[width=\linewidth]{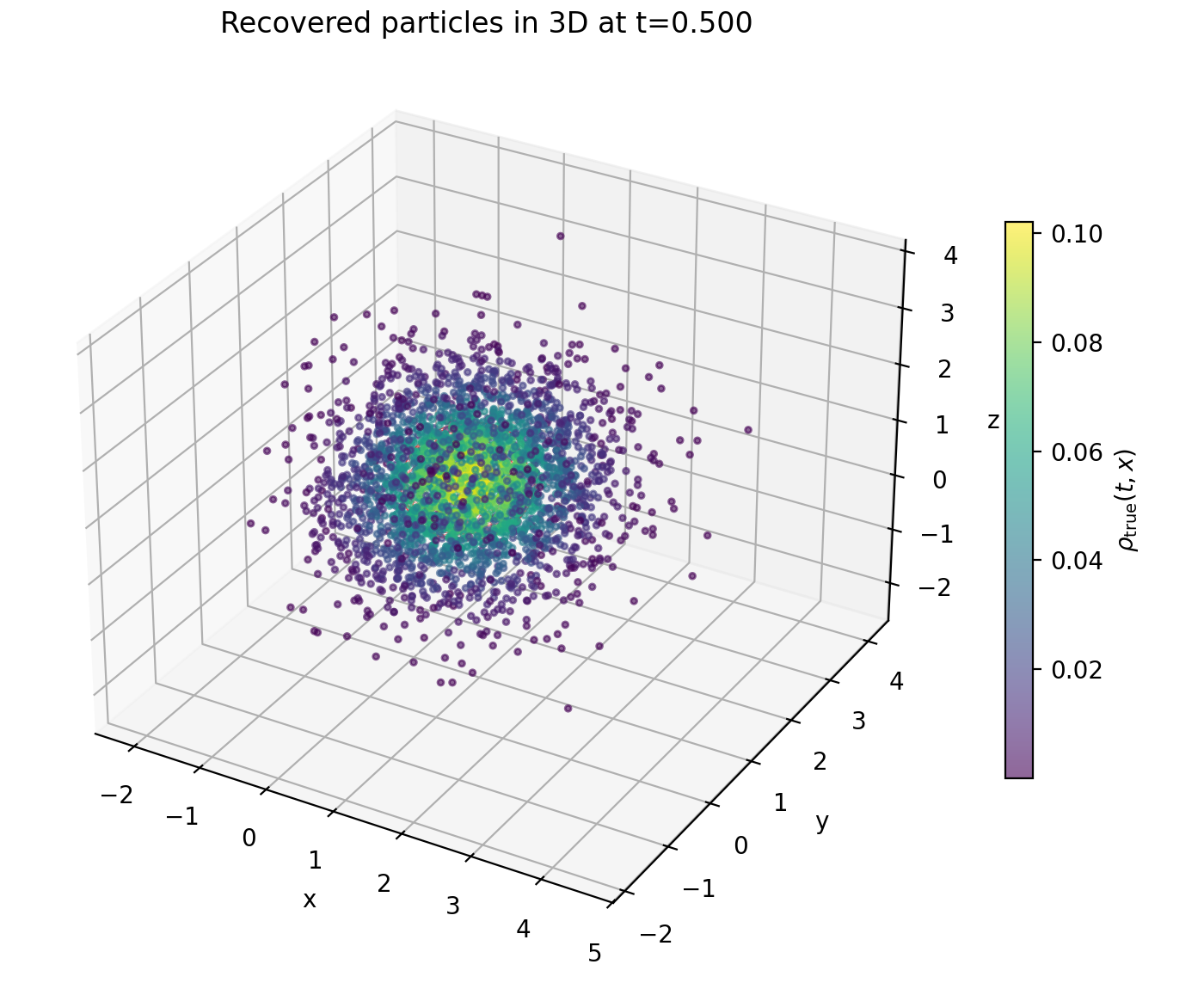}
  \subcaption{$t=0.5$}
\end{minipage}
\begin{minipage}{0.4\textwidth}
  \centering
  \includegraphics[width=\linewidth]{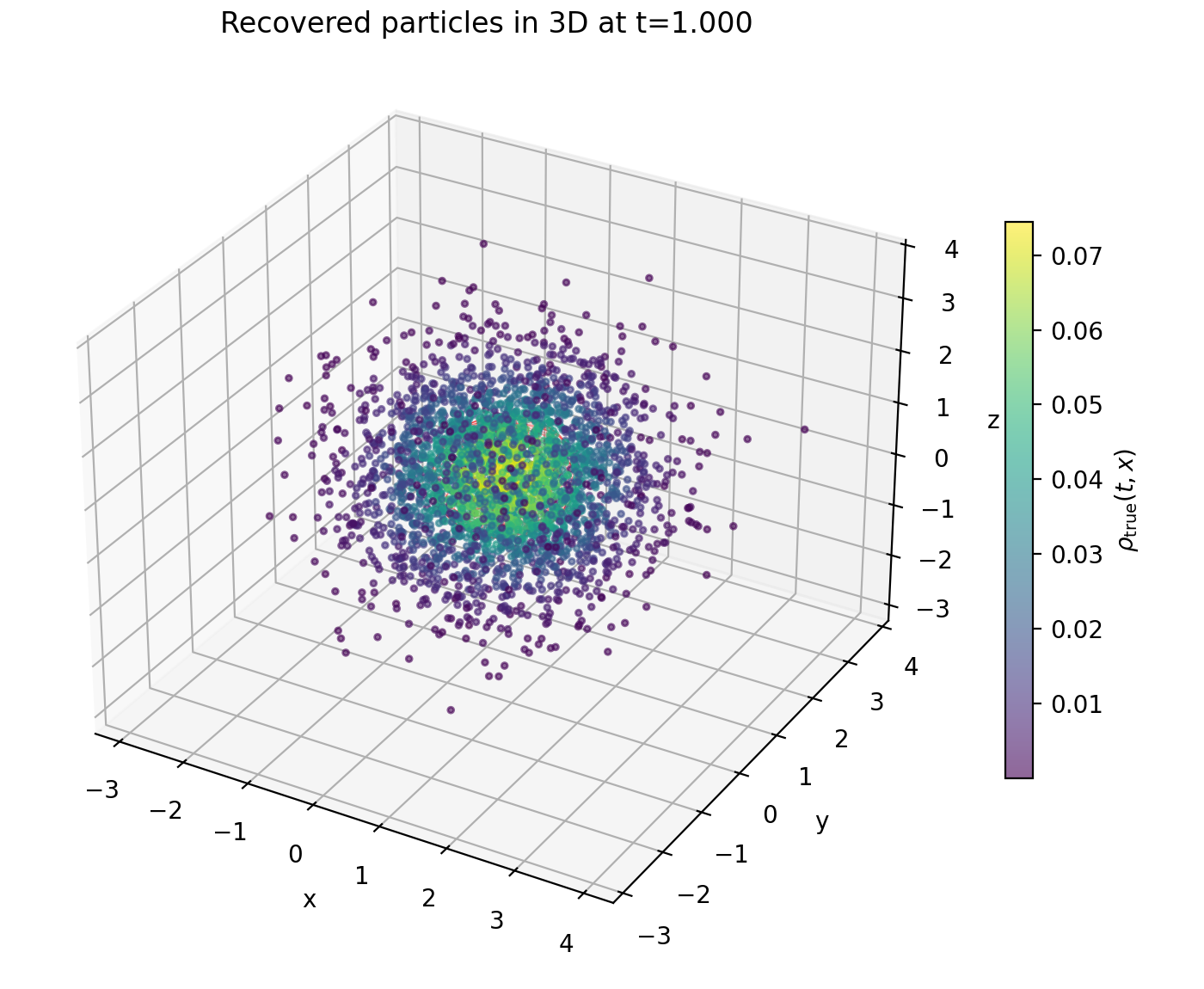}
  \subcaption{$t=1.0$}
\end{minipage}
\caption{Training loss and recovered particles at $t=0,0.5,1.0$ for the 3D Ornstein--Uhlenbeck example (Example~\ref{ex:ou3d}). }
\label{fig:ou3d}
\end{figure}

\begin{figure}[htbp]
\centering
\includegraphics[width=0.8\textwidth]{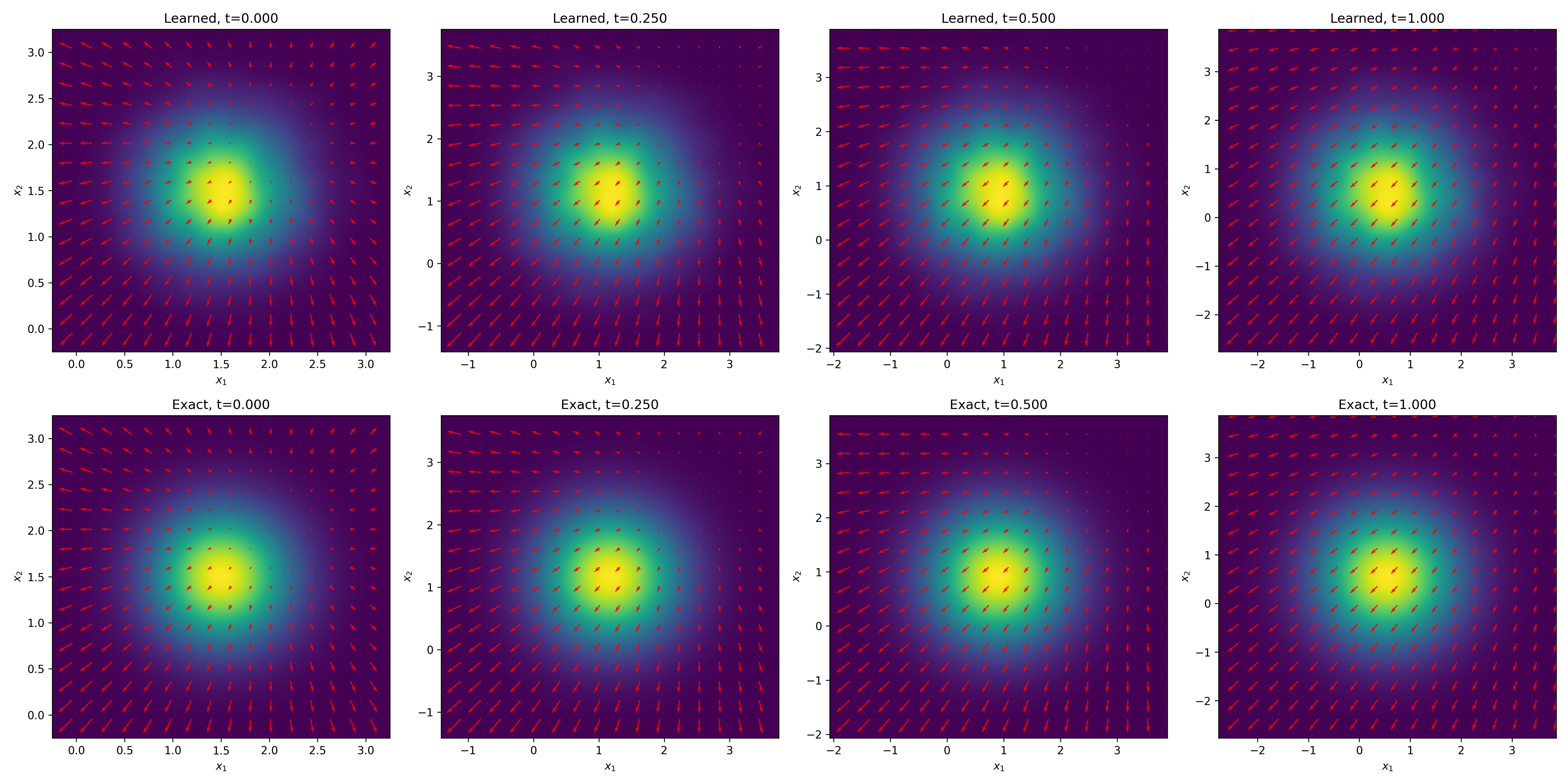}
\caption{Learned and exact dynamics in the $(x_1,x_2)$ coordinates for
Example~\ref{ex:ou3d}.}
\label{fig:ou10d_x1x2}
\end{figure}

\begin{figure}[htbp]
\centering
\includegraphics[width=0.8\textwidth]{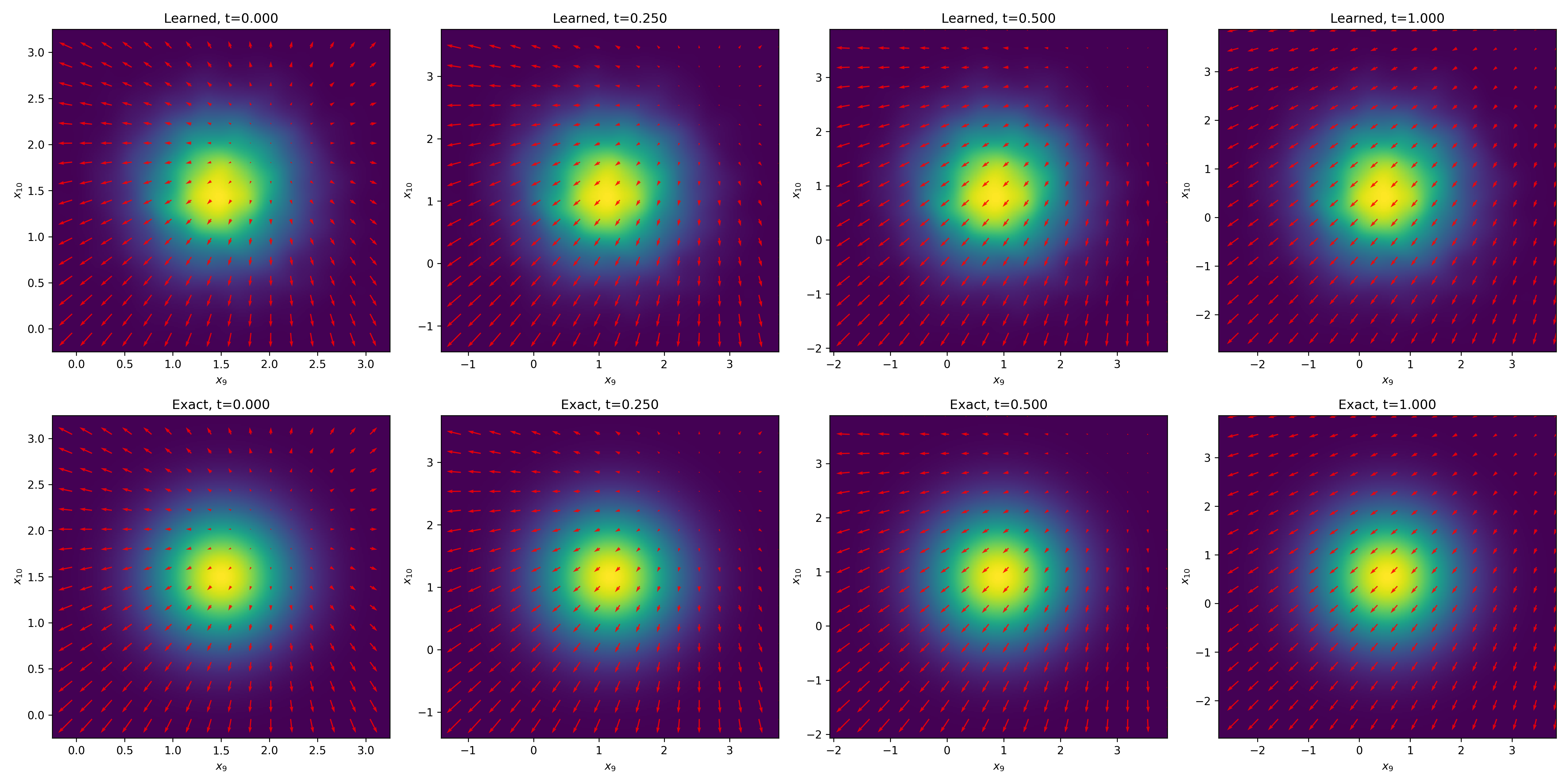}
\caption{Learned and exact dynamics in the $(x_9,x_{10})$ coordinates for
Example~\ref{ex:ou3d}.}
\label{fig:ou10d_x9x10}
\end{figure}

\begin{table}[ht]
\centering
\small
\setlength{\tabcolsep}{6pt}
\renewcommand{\arraystretch}{1.12}
\begin{tabular}{ccrrrrrr}
\toprule
$d$ & $N_p$ & time/iter (s) & memory (GB)
& $L^1$ & $L^2$ & KL & $e_v$ \\
\midrule

1
& 128 & 1.45 & 0.4
& $5.33\times10^{-3}$ & $5.19\times10^{-3}$
& $1.16\times10^{-4}$ & $1.37\times10^{-2}$ \\
& 256 & 1.34 & 1.0
& $4.29\times10^{-3}$ & $4.12\times10^{-3}$
& $3.41\times10^{-5}$ & $1.40\times10^{-2}$ \\
& 512 & 1.22 & 1.6
& $3.46\times10^{-3}$ & $3.27\times10^{-3}$
& $5.71 \times 10^{-5}$ & $1.43\times10^{-2}$ \\
\addlinespace[2pt]

2
& 128 & 1.52 & 0.6
& $5.70\times10^{-3}$ & $5.73\times10^{-3}$
& $1.97\times10^{-4}$ & $1.88\times10^{-2}$ \\
& 256 & 1.42 & 1.5
& $5.70\times10^{-3}$ & $5.62\times10^{-3}$
& $1.36\times10^{-4}$ & $1.74\times10^{-2}$ \\
& 512 & 1.32 & 2.4
& $5.71\times10^{-3}$ & $5.52\times10^{-3}$
& $9.38\times10^{-5}$ & $1.61\times10^{-2}$ \\
\addlinespace[2pt]

3
& 128 & 1.41 & 0.8
& $9.29\times10^{-3}$ & $7.58\times10^{-3}$
& $4.25\times10^{-4}$ & $2.39\times10^{-2}$ \\
& 256 & 1.34 & 2.0
& $8.81\times10^{-3}$ & $7.51\times10^{-3}$
& $3.84\times10^{-4}$ & $2.07\times10^{-2}$ \\
& 512 & 1.27 & 3.2
& $8.35\times10^{-3}$ & $7.44\times10^{-3}$
& $3.46\times10^{-4}$ & $1.80\times10^{-2}$ \\
\addlinespace[2pt]

5
& 128 & 1.38 & 1.2
& $3.42\times10^{-2}$ & $2.69\times10^{-2}$
& $3.01\times10^{-3}$ & $3.58\times10^{-2}$ \\
& 256 & 1.33 & 3.0
& $3.24\times10^{-2}$ & $2.72\times10^{-2}$
& $2.10\times10^{-3}$ & $2.86\times10^{-2}$ \\
& 512 & 1.28 & 4.8
& $3.06\times10^{-2}$ & $2.75\times10^{-2}$
& $1.47\times10^{-3}$ & $2.29\times10^{-2}$ \\
\addlinespace[2pt]

10
& 128 & 1.19 & 2.2
& $2.66\times10^{-1}$ & $2.66\times10^{-1}$
& $7.30\times10^{-2}$ & $8.59\times10^{-2}$ \\
& 256 & 1.30 & 5.55
& $1.60\times10^{-1}$ & $1.47\times10^{-1}$
& $2.68\times10^{-2}$ & $6.13\times10^{-2}$ \\
& 512 & 1.40 & 8.9
& $9.57\times10^{-2}$ & $8.07\times10^{-2}$
& $9.85\times10^{-3}$ & $4.38\times10^{-2}$ \\
\addlinespace[2pt]

15
& 128 & 1.34 & 3.3
& $7.48\times10^{-1}$ & $1.85\times10^{0}$
& $4.53\times10^{-1}$ & $1.54\times10^{-1}$ \\
& 256 & 1.40 & 8.15
& $3.65\times10^{-1}$ & $6.25\times10^{-1}$
& $1.20\times10^{-1}$ & $9.60\times10^{-2}$ \\
& 512 & 1.45 & 13.0
& $1.78\times10^{-1}$ & $2.11\times10^{-1}$
& $3.15\times10^{-2}$ & $5.98\times10^{-2}$ \\

\bottomrule
\end{tabular}

\caption{Cost, memory, and accuracy versus $d$ and $N_p$ for the OU example
($\Delta t=0.01$, 2000 iterations). $L^1$, $L^2$: relative density errors
at $t=T$; KL: $\mathrm{KL}(\tilde\rho_T\|\rho_T)$; $e_v$: relative velocity
error. KL values below $10^{-5}$ are within Monte Carlo error.}
\label{tab:dim_Np}
\end{table}

\begin{figure}[ht]
\centering
\includegraphics[width=0.9\textwidth]{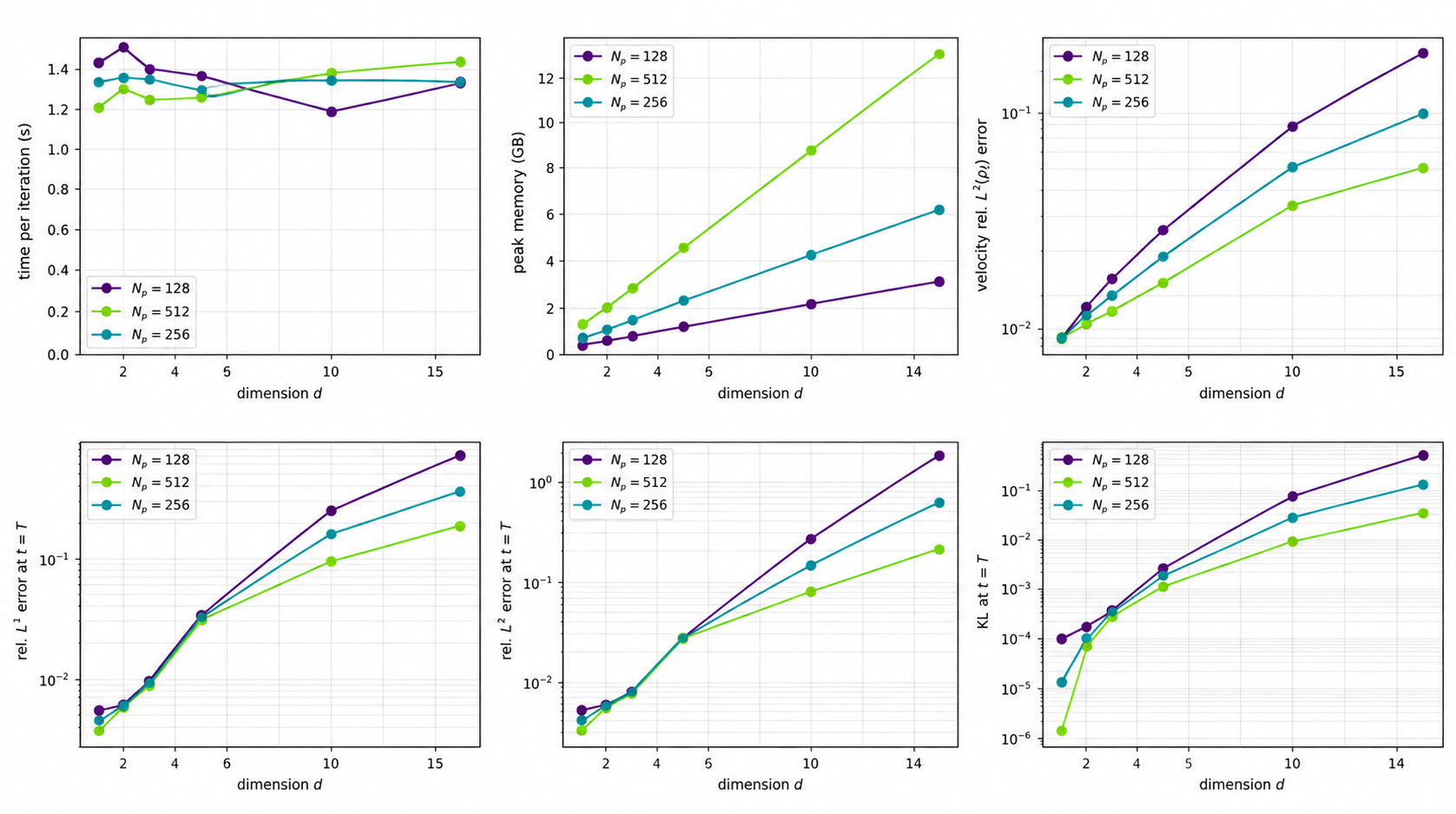}
\caption{Cost, memory, and accuracy versus dimension $d$ for
$N_p\in\{128,256,512\}$ (OU example, $\Delta t=0.01$, 2000 iterations).
Top: time per iteration, peak GPU memory, and relative velocity error
$e_v$. Bottom: relative $L^1$ and $L^2$ errors and KL divergence of the
density at $t=T$.}
\label{fig:dim_Np}
\end{figure}

\section{Conclusion and Outlook}

In this work, we developed a variational neural transport framework for nonlinear aggregation-diffusion equations based on the Onsager variational principle. By reformulating the governing PDE as a constrained optimization problem, the transport velocity naturally emerges as the optimization variable, leading to a variational learning objective that reflects the underlying free-energy dissipation. The resulting particle-based method parameterizes the velocity field with a neural network and evaluates the variational objective along characteristic trajectories. This mesh-free formulation  avoids explicit spatial discretization and provides a flexible and scalable approach to high-dimensional problems.

On the theoretical side, we established the equivalence between the proposed variational formulation and the original aggregation--diffusion equation, derived stability estimates based on the KL divergence, and analyzed the evolution of the density and generalized score function along characteristics, which provides a foundation for particle-based implementation. We further developed a rigorous error analysis that separates neural approximation, optimization, temporal discretization, and Monte Carlo sampling errors. Numerical experiments on representative models, including the Fokker-Planck, Doi-Onsager, Patlak-Keller-Segel, and porous medium equations, demonstrate the accuracy, robustness, and flexibility of the framework. 

Several directions remain open. These include extensions to coupled PDE systems, more general mobility functions, anisotropic transport, and non-Euclidean variational structures, as well as adaptive strategies for efficient training over long-time dynamics.
More broadly, the framework suggests that variational principles such as the Onsager principle can provide a systematic pathway for constructing variational neural solvers beyond aggregation--diffusion equations, with potential applications to a broader class of gradient-flow and multiphysics systems where preserving underlying variational and physical structures is essential.
\bibliographystyle{amsplain}
\bibliography{ref}

\appendix

\renewcommand{\theequation}{A.\arabic{equation}}
\setcounter{equation}{0}

\section{Technical Proofs}\label{AppendixA}
Proof of Proposition \ref{KL_sta}:

(i) Positivity of $E(\rho \mid \pi)$. Since $H$ is convex, we have
\[
H(\rho)-H(\pi)-H'(\pi)(\rho-\pi)\ge0.
\]
Integrating over $\Omega$ gives
\[
\int_\Omega
\bigl(H(\rho)-H(\pi)-H'(\pi)(\rho-\pi)\bigr)\,dx
\ge0.
\]
Moreover, since $W$ is positive semi-definite, namely,
\[
\int_{\Omega\times\Omega}
W(x-y)f(x)f(y)\,dx\,dy\ge0
\]
for every admissible function $f$. Taking $f=\rho-\pi$, we obtain
\[
\frac12
\int_{\Omega\times\Omega}
W(x-y)(\rho-\pi)(x)(\rho-\pi)(y)\,dx\,dy
\ge0.
\]
Hence,
\[
E(\rho\mid\pi)\ge0.
\]

(ii) Non--increasing. Let $\eta=\rho-\pi$. We first show that
\[
E(\rho\mid\pi)=E(\rho)-E(\pi).
\]
Indeed, writing $\rho=\pi+\eta$, we have
\[
\rho(x)\rho(y)-\pi(x)\pi(y)
=
\pi(x)\eta(y)+\eta(x)\pi(y)+\eta(x)\eta(y).
\]
Using the symmetry of $W$, the interaction-energy difference becomes
\[
\begin{aligned}
&\frac12
\int_{\Omega\times\Omega}
W(x-y)
\bigl(\rho(x)\rho(y)-\pi(x)\pi(y)\bigr)
\,dx\,dy  \\
&\qquad
=
\int_\Omega
(W*\pi)(x)\eta(x)\,dx
+
\frac12
\int_{\Omega\times\Omega}
W(x-y)\eta(x)\eta(y)\,dx\,dy.
\end{aligned}
\]
Therefore,
\[
\begin{aligned}
E(\rho)-E(\pi)
={}&
E(\rho\mid\pi)
+
\int_\Omega
\bigl(H'(\pi)+V+W*\pi\bigr)(\rho-\pi)\,dx.
\end{aligned}
\]
Since $\pi$ is a stationary solution,
\[
\mu[\pi]
=
H'(\pi)+V+W*\pi
=
C
\]
for some constant $C$. Together with the mass conservation $\int_\Omega(\rho-\pi)\,dx=0,$ we obtain
\[
E(\rho\mid\pi)=E(\rho)-E(\pi).
\]

Since $\pi$ is independent of time,
\[
\frac{d}{dt}E(\rho\mid\pi)
=
\frac{d}{dt}E(\rho) = 
\int_\Omega
\mu[\rho]\,
\nabla_x\cdot
\bigl(\rho\nabla_x\mu[\rho]\bigr)\,dx.
\]
Using integration by parts and the no-flux boundary condition yields
\[
\frac{d}{dt}E(\rho)
=
-\int_\Omega
\rho
|\nabla_x\mu[\rho]|^2
\,dx.
\]
Consequently,
\[
\frac{d}{dt}E(\rho\mid\pi)
=
-\int_\Omega
\rho
|\nabla_x\mu[\rho]|^2
\,dx,
\]
which shows that $E(\rho\mid\pi)$ is monotonically non-increasing along solutions and thus serves as a Lyapunov functional for the system.. This completes the proof.

Proof of Lemma \ref{L2_KL}:
    Let $r = \frac{\rho_1}{\rho_2}$, we have
    \begin{equation*}
        \rho_1 - \rho_2 = \rho_2(r-1).
    \end{equation*}
    So
    \begin{equation}\label{ineq_M}
        |\rho_1 - \rho_2|^2  = (\rho_2)^2 (r-1)^2 \leq M^* \rho_2 (r-1)^2.
    \end{equation}
    And by the bound of $\rho_1,\rho_2$, we can get the bound of $r$:
    \begin{equation*}
        0 \leq r \leq \frac{M^*}{m^*}.
    \end{equation*}
    Define function
    \begin{equation*}
        g(r)= \frac{(r-1)^2}{r\log r - r + 1}, \quad r\in (0, \infty), r \neq 1. 
    \end{equation*}
Note that $g(r)$  satisfies 
$$
\lim_{r\rightarrow 1} g(r) = 2.
$$
By defining $g(0) = 1$ and $g(1) = 2$, $g$ can be continuously extended to $I:=\left[\frac{m^*}{M^*}, \frac{M^*}{m^*}\right]$ and so $g(r)$ is bounded on interval $I$.  Hence there exists a constant $C_* > 0$, depending on $m^*, M^*$, such that 
    \begin{equation*}
        (r-1)^2 \leq C_* (r\log r - r + 1).
    \end{equation*}
    Plug into (\ref{ineq_M}) and take the integral, we have
    \begin{equation*}
        \begin{aligned}
            \| \rho_1 - \rho_2\|_{L^2(\Omega)}^2 = \int_\Omega |\rho_1 - \rho_2|^2 \,dx & \leq  C_* \int_\Omega \rho_2 (r\log r - r + 1) \,dx\\
            & =  C_* \bigg[ \int_\Omega \rho_1 \log \frac{\rho_1}{\rho_2} \,dx - \int_\Omega \rho_1\,dx  + \int_\Omega \rho_2 \,dx \bigg]\\
            & =  C_* D_{KL}(\rho_1\|\rho_2).
        \end{aligned}
    \end{equation*}    

Proof of Lemma \ref{Lemma_KL_t}:
We compute
    \begin{equation*}
\begin{aligned}
    \frac{d}{dt}D_{KL}(\rho\|\rho^*)  & = \int_{\Omega}\partial_t \rho  \log\frac{\rho}{\rho^*} - \frac{\rho}{\rho^*} \partial_t \rho^*\,dx.
\end{aligned}
\end{equation*}

Using integration by parts, and consider the no-flux boundary condition of $\rho$ and $\rho^*$, we have:
\begin{equation*}
    \begin{aligned}
        \frac{d}{dt}D_{KL}(\rho\|\rho^*)  & = \int_{\Omega} - \nabla_x \cdot( \rho v) \log\frac{\rho}{\rho^*} +  \frac{\rho}{\rho^*} \nabla_x \cdot (\rho^* v^*)\,dx\\
    & = \int_\Omega  \rho v \nabla_x\log \frac{\rho}{\rho^*} - \nabla_x(\frac{\rho}{\rho^*}) \rho^* v^*\,dx\\ 
    & = \int_\Omega  \rho v \nabla_x\log \frac{\rho}{\rho^*} - \nabla_x( \log \frac{\rho}{\rho^*}) \rho v^*\,dx \\
    &  = \int_\Omega  \rho\nabla_x\log \frac{\rho}{\rho^*} \cdot  (v - v^*) \,dx.
    \end{aligned}
\end{equation*}

Proof of Theorem \ref{KL_control}: 
The proof proceeds in several steps: \\
Step 1:  Decomposition for control.   
    \begin{equation*}
\begin{aligned}
\frac{d}{dt}D_{KL}(\rho\|\rho^*)
&=
\int_\Omega \rho \nabla_x \log \frac{\rho}{\rho^*} \cdot (v-v^*) \,dx \\
&=
\underbrace{
\int_\Omega \rho \nabla_x \log \frac{\rho}{\rho^*}
\cdot
\Big(
v+\nabla_x\log\rho+\nabla_xV+\nabla_x(W*\rho)
\Big)\,dx
}_{\mathrm{I}}
\\
&\quad
-\underbrace{
\int_\Omega
\rho
\left|
\nabla_x \log \frac{\rho}{\rho^*}
\right|^2
dx
}_{\mathrm{II}}
\\
&\quad
-\underbrace{
\int_\Omega
\rho
\nabla_x \log \frac{\rho}{\rho^*}
\cdot
\nabla_x\big(W*(\rho-\rho^*)\big)
\,dx
}_{\mathrm{III}}.
\end{aligned}
\end{equation*}
Applying Young's inequality, we estimate 
\begin{equation*}
\mathrm{I}
\leq
\frac12
\int_\Omega
\rho
\left|
\nabla_x \log \frac{\rho}{\rho^*}
\right|^2
dx
+
\frac12
\int_\Omega
\rho
\left|
v+\nabla_x\log\rho+\nabla_xV+\nabla_x(W*\rho)
\right|^2
dx,
\end{equation*}
and
\begin{equation*}
\mathrm{III}
\leq
\frac12
\int_\Omega
\rho
\left|
\nabla_x \log \frac{\rho}{\rho^*}
\right|^2
dx
+
\frac12
\int_\Omega
\rho
\left|
\nabla_x\big(W*(\rho-\rho^*)\big)
\right|^2
dx.
\end{equation*}
Substituting these estimates into the decomposition identity and canceling the common Fisher information yields 
\begin{equation*}
\frac{d}{dt}D_{KL}(\rho\|\rho^*)
\leq
\frac12
\int_\Omega
\rho
\left|
v+\nabla_x\log\rho+\nabla_xV+\nabla_x(W*\rho)
\right|^2
dx
+
\frac12
\int_\Omega
\rho
\left|
\nabla_x\big(W*(\rho-\rho^*)\big)
\right|^2
dx.
\end{equation*}
Step 2. Bounding both $\rho$ and $\rho^*$. \\
Since $v$ is Lipschitz in $x$ and continuous in $t$, its divergence satisfies
$$
\nabla_x\cdot v\in L^\infty([0,T]\times\Omega).
$$
By the method of characteristics,
\begin{equation*}
\rho(t,X(t;x_0))
=
\rho_0(x_0)
\exp\left(
-\int_0^t
\nabla_x\cdot v(s,X(s;x_0))\,ds
\right),
\end{equation*} 
we obtain,  with $C_v:=\|\nabla_x\cdot v\|_{L^\infty([0,T]\times\Omega)}$, 
$$
m_0 e^{-C_vT} \le
\rho(t,x)
\le M_0
e^{C_vT}, 
$$
uniformly for $(t,x)\in[0,T]\times\Omega)$. Next, we derive uniform bounds for  $\rho^*$, the classical solution of  (\ref{Ori_sys}). By the strong maximum principle; see \cite[Chapter~2, Theorem~2.9]{Lieberman1996}, we have 
$$
\rho^*(t,x)>0, \quad (t, x)\in (0, T]\times \Omega. 
$$
Moreover, since $\rho^*$ is continuous on the compact set $[0,T]\times\overline{\Omega}$, it attains its minimum and maximum there. Consequently, $\rho^*$ is uniformly bounded away from zero and above.
  
Combining this with the estimate established for $\rho$, we conclude that there exist positive constants $m_*,M_*>0$ such that
\begin{equation*}
m_*
\le
\rho^*(t,x), \rho(x, t)
\le
M_*,
\qquad
(t,x)\in[0,T]\times\Omega.
\end{equation*}

Step 3. Bounding the interaction term. \\
Using the uniform bound $\rho\le M^*$, we obtain
\begin{align*} 
\int_\Omega
\rho
\left|
\nabla_x \left(W*(\rho-\rho^*)\right)
\right|^2 dx
& \le M^*
\left\|\nabla_x W*(\rho-\rho^*)
\right\|_{L^2(\Omega)}^2 \\
& \le M^*
\|\nabla_x W\|^2_{L^1(\Omega)}
\|\rho-\rho^*\|^2_{L^2(\Omega)},
\end{align*} 
where we have used $W\in C^2(\Omega)$ and Young's convolution inequality of form 
$$
\|w*g\|_{L^2}\leq \|w\|_{L^1}\|g\|_{L^2}.
$$

Note that mass conservation for both $\rho$ and $\rho^*$ yields 
$$
\int_{\Omega}\rho(x, t)dx=\int_{\Omega}\rho^*(x, t)dx=\int_{\Omega}\rho_0(x)dx.
$$
Applying Lemma \ref{L2_KL}, we obtain  
\begin{equation*}
\|\rho-\rho^*\|_{L^2(\Omega)}^2
\leq
M^*C_* D_{KL}(\rho\|\rho^*).
\end{equation*}
Hence 
$$
\int_\Omega
\rho
\left|
\nabla_x \left(W*(\rho-\rho^*)\right)
\right|^2 dx \leq 2 C D_{KL}(\rho\|\rho^*),
$$
where 
\begin{equation*}
C
=
\frac12
(M^*)^2
C_*
\|\nabla_xW\|_{L^1(\Omega)}^2.
\end{equation*}
Step 4. Control of $D_{KL}$. \\ 
Combining the above estimates, we obtain 
\begin{equation*}
\frac{d}{dt}D_{KL}(\rho\|\rho^*)
\leq
\int_\Omega
\rho
\left|
v+\nabla_x\log\rho+\nabla_xV+\nabla_x(W*\rho)
\right|^2
dx
+C
D_{KL}(\rho\|\rho^*).
\end{equation*}
Applying Gr\"onwall's inequality on $[0,T]$ and using the fact that $ D_{KL}(\rho(0,\cdot)\|\rho^*(0,\cdot))=0$,  we obtain
\begin{equation}
D_{KL}(\rho\|\rho^*)(t)
\le
e^{CT}
\int_0^T
\left(
\int_\Omega \rho| v + \nabla_x \log \rho + \nabla_x V + \nabla_x( W \ast \rho)|^2 \,dx
\right) dt.
\end{equation}
This completes the proof. 

Proof of Lemma \ref{Lemma_LD}:
Since the exact solution satisfies
\begin{equation*}
v^*+\nabla \mu^*=0,
\end{equation*}
the loss functional can be rewritten as  
\begin{equation*}
L_S(\theta_S)
=
\int_0^T\int_\Omega
\left|
v_{\theta_S}-v^*
+
\nabla(\mu_{\theta_S}-\mu^*)
\right|^2
\rho_{\theta_S}\,dxdt,
\end{equation*}
where
\begin{equation*}
\mu_{\theta_S}
=
H'(\rho_{\theta_S})+ V +W*\rho_{\theta_S},
\quad \text{and}\quad  
\mu^*
=
H'(\rho^*)+ V + W*\rho^*.
\end{equation*}
Therefore,
$$
\nabla(\mu_{\theta_S}-\mu^*)= \nabla\bigl(H'(\rho_{\theta_S})-H'(\rho^*)\bigr)
+\nabla W*(\rho_{\theta_S}-\rho^*),
$$
since the potential V cancels in the difference.

Using the elementary inequality $|a+b+c|^2 \leq 3(|a|^2+|b|^2+|c|^2)$, we obtain 
\begin{equation*}
\begin{aligned}
L_S(\theta_S)
&\le
3\int_0^T\int_\Omega
|v_{\theta_S}-v^*|^2
\rho_{\theta_S}\,dxdt
+
3\int_0^T\int_\Omega
\left|
\nabla\bigl(H'(\rho_{\theta_S})-H'(\rho^*)\bigr)
\right|^2
\rho_{\theta_S}\,dxdt \\
&\quad+
3\int_0^T\int_\Omega
\left|
\nabla W*(\rho_{\theta_S}-\rho^*)
\right|^2
\rho_{\theta_S}\,dxdt .
\end{aligned}
\end{equation*}

When $H'(\rho)=\log\rho,$ we have
\begin{equation*}
\nabla\bigl(H'(\rho_{\theta_S})-H'(\rho^*)\bigr)
=
\nabla\log\rho_{\theta_S}
-
\nabla\log\rho^* := s_{\theta_S} - s^*,
\end{equation*}
the difference of the corresponding score functions. 

Using Young's convolution inequality, we have 
\begin{equation*}
\|\nabla W * (\rho_{\theta_S}-\rho^*)\|_{L^2(\Omega)}
\le
\|\nabla W\|_{L^1(\Omega)}
\|\rho_{\theta_S}-\rho^*\|_{L^2(\Omega)}.
\end{equation*}
Therefore, 
\begin{equation}\label{LD}
\begin{aligned}
L_S(\theta_S)
& \le
3 \int_0^T\int_\Omega
\left|e_v(t, x)
\right|^2 \rho_{\theta_S}
dxdt +
3\int_0^T\int_\Omega
\left|e_s(t, x) 
\right|^2 \rho_{\theta_S}
dxdt \\
& \qquad +
3 \|\rho_{\theta_S}\|_{L^\infty(\Omega_T)}  \|\nabla W\|_{L^1(\Omega)}^2
\int_0^T
\|e_\rho(t, \cdot)\|_{L^2(\Omega)}^2
dt,
\end{aligned}
\end{equation}
where $\Omega_T:=[0, T]\times \Omega$ and  
\[
e_s=s_{\theta_S}-s^*,\qquad e_v=v_{\theta_S}-v^*, \quad e_\rho= 
\rho_{\theta_S}-\rho^*.
\]

We next estimate the score error involving $|e_s|^2\rho_{\theta_S}$. 
The score error satisfies
\[
\partial_t e_s
+(v_{\theta_S}\cdot\nabla)e_s
+(\nabla v_{\theta_S})^T e_s
=
-(e_v\cdot\nabla)s^*
-(\nabla e_v)^Ts^*
-\nabla(\nabla\cdot e_v),
\]
and the density satisfies 
\[
\partial_t\rho_{\theta_S}
+
\nabla\cdot(\rho_{\theta_S}v_{\theta_S})=0.
\]
Using these two equations, we evaluate the following quantities:
\[
\begin{aligned}
\frac12\frac{d}{dt}
\int_\Omega |e_s|^2\rho_{\theta_S}\,dx
&=
\int_\Omega \rho_{\theta_S} e_s\cdot\partial_t e_s\,dx
+
\frac12\int_\Omega |e_s|^2\partial_t\rho_{\theta_S}\,dx .
\end{aligned}
\]
The last term reduces to 
$$
\frac12\int_\Omega |e_s|^2\partial_t\rho_{\theta_S}\,dx
=\int_\Omega \rho_{\theta_S} e_s\cdot(v_{\theta_S}\cdot\nabla)e_s\,dx,
$$
after using the density equation and integration by parts with the zero-flux condition
\[
\rho_{\theta_S}v_{\theta_S}\cdot \mathbf{n}=0
\qquad\text{on }\partial\Omega.
\]
Hence
\[
\begin{aligned}
\frac12\frac{d}{dt}
\int_\Omega |e_s|^2\rho_{\theta_S}\,dx
=&
-\int_\Omega \rho_{\theta_S} e_s\cdot(\nabla v_{\theta_S})^T e_s\,dx\\
&-\int_\Omega \rho_{\theta_S} e_s\cdot(e_v\cdot\nabla)s^*\,dx\\
&-\int_\Omega \rho_{\theta_S} e_s\cdot(\nabla e_v)^Ts^*\,dx\\
&-\int_\Omega \rho_{\theta_S} e_s\cdot\nabla(\nabla\cdot e_v)\,dx .
\end{aligned}
\]
For the  term involving $(\nabla v_{\theta_S})^T e_s$, we have 
\[
\left|
\int_\Omega \rho_{\theta_S} e_s\cdot(\nabla v_{\theta_S})^T e_s\,dx
\right|
\le
\|\nabla v_{\theta_S}\|_{L^\infty}
\int_\Omega |e_s|^2\rho_{\theta_S}\,dx .
\]
Foe the term involving $(e_v\cdot\nabla)s^*$, we obtain 
\[
\begin{aligned}
\left|
\int_\Omega \rho_{\theta_S} e_s\cdot(e_v\cdot\nabla)s^*\,dx
\right|
&\le
\|\nabla s^*\|_{L^\infty}
\int_\Omega |e_s||e_v|\rho_{\theta_S}\,dx \\
&\le
\frac12\int_\Omega |e_s|^2\rho_{\theta_S}\,dx
+
\frac12\|\nabla s^*\|_{L^\infty}^2
\int_\Omega |e_v|^2\rho_{\theta_S}\,dx .
\end{aligned}
\]
Similarly, for the term involving $(\nabla e_v)^Ts^*$, we have 
\[
\begin{aligned}
\left|
\int_\Omega \rho_{\theta_S} e_s\cdot(\nabla e_v)^Ts^*\,dx
\right|
&\le
\|s^*\|_{L^\infty}
\int_\Omega |e_s||\nabla e_v|\rho_{\theta_S}\,dx \\
&\le
\frac12\int_\Omega |e_s|^2\rho_{\theta_S}\,dx
+
\frac12\|s^*\|_{L^\infty}^2
\int_\Omega |\nabla e_v|^2\rho_{\theta_S}\,dx .
\end{aligned}
\]
Finally, for the term involving $\nabla(\nabla\cdot e_v)$, we have 
\[
\begin{aligned}
\left|
\int_\Omega \rho_{\theta_S} e_s\cdot\nabla(\nabla\cdot e_v)\,dx
\right|
&\le
\int_\Omega |e_s||\nabla(\nabla\cdot e_v)|\rho_{\theta_S}\,dx \\
&\le
\frac12\int_\Omega |e_s|^2\rho_{\theta_S}\,dx
+
\frac12\int_\Omega |\nabla(\nabla\cdot e_v)|^2\rho_{\theta_S}\,dx .
\end{aligned}
\]

Combining the above estimates yields
\[
\begin{aligned}
\frac{d}{dt}
\int_\Omega |e_s|^2\rho_{\theta_S}\,dx
\le\;&
C_1\int_\Omega |e_s|^2\rho_{\theta_S}\,dx + 
C_1\int_\Omega
\left(
|e_v|^2
+
|\nabla e_v|^2
+
|\nabla(\nabla\cdot e_v)|^2
\right)\rho_{\theta_S}\,dx,
\end{aligned}
\]
where $$
C_1 = \max\Big\{C_v + \frac32, \frac12(1 + \|s^*\|_{L^{\infty}}^2 + \|\nabla_x s^*\|_{L^{\infty}}^2)\Big\}.
$$
By Gronwall's inequality, for \(0\le t\le T\), with \(e_s(0)=0\), we have
\begin{equation} \label{es} 
\int_\Omega |e_s(t)|^2\rho_{\theta_S}(t)\,dx
\le
C_1(T)\int_0^T\int_\Omega
\left(
|e_v|^2
+
|\nabla e_v|^2
+
|\nabla(\nabla\cdot e_v)|^2
\right)\rho_{\theta_S}\,dxdt ,
\end{equation}
where $C_1(T):=C_1 e^{C_1T}$.
Next we estimate the term involving $\|e_\rho\|^2_{L^2}$. From the continuity equation for both $\rho_{\theta_S}$ and $\rho^*$, 
we obtain 
\[
\partial_t e_\rho
+
\nabla\cdot(e_\rho v_{\theta_S})
+
\nabla\cdot(\rho^*e_v)
=0.
\]
Multiplying by \(e_\rho\) and integrating over \(\Omega\) gives 
\[
\int_\Omega e_\rho \partial_t e_\rho\,dx
+
\int_\Omega e_\rho
\nabla\cdot(e_\rho v_{\theta_S})\,dx
+
\int_\Omega e_\rho
\nabla\cdot(\rho^*e_v)\,dx
=0.
\]
Therefore,
\[
\frac12\frac{d}{dt}\|e_\rho\|_{L^2}^2
=
-\int_\Omega e_\rho
\nabla\cdot(e_\rho v_{\theta_S})\,dx
-\int_\Omega e_\rho
\nabla\cdot(\rho^*e_v)\,dx .
\]
For the first term, integration by parts twice yields 
\[
\begin{aligned}
-\int_\Omega e_\rho
\nabla\cdot(e_\rho v_{\theta_S})\,dx
&=
 \frac12
\int_\Omega
v_{\theta_S} \cdot \nabla|e_\rho|^2dx
&=
-\frac12
\int_\Omega
(\nabla\cdot v_{\theta_S})
e_\rho^2\,dx,
\end{aligned}
\]
and since $\rho_{\theta_S} > 0$ and $\rho_{\theta_S}v_{\theta_S} \cdot \mathbf{n} = 0$ on $\partial \Omega$, the boundary term vanishes.

Hence,
\[
\left|
-\int_\Omega e_\rho
\nabla\cdot(e_\rho v_{\theta_S})\,dx
\right|
\le
\frac12
\|\nabla \cdot v_{\theta_S}\|_{L^\infty}
\|e_\rho\|_{L^2}^2 .
\]
For the second term, expanding the divergence gives
\[
\begin{aligned}
\left|
\int_\Omega e_\rho
\nabla\cdot(\rho^*e_v)\,dx
\right|
&=
\left|
\int_\Omega
e_\rho
\left(
\nabla\rho^*\cdot e_v
+
\rho^* \nabla\cdot e_v
\right)
dx
\right| \\
&\le
\int_\Omega
|e_\rho|
\Big(
|\nabla\rho^*||e_v|
+
|\rho^*||\nabla\cdot e_v|
\Big)
dx.
\end{aligned}
\]
Applying Young’s inequality and the Cauchy–Schwarz inequality, we obtain 
\[
\begin{aligned}
\left|
\int_\Omega e_\rho
\nabla\cdot(\rho^*e_v)\,dx
\right|
\le
\|e_\rho\|_{L^2}^2 +
\frac12 \|\nabla\rho^*\|_{L^\infty}^2
\|e_v\|_{L^2}^2+ 
\frac12 \|\rho^*\|_{L^\infty}^2
\|\nabla\cdot e_v\|_{L^2}^2 .
\end{aligned}
\]
Combining the above estimates yields
\[
\frac{d}{dt}\|e_\rho\|_{L^2}^2
\le
C_2(\|e_\rho\|_{L^2}^2
+
\|e_v\|_{L^2}^2
+
\|\nabla\cdot e_v\|_{L^2}^2),
\]
where $$
C_2 = \max\Big\{ C_v + 2, \|\nabla_x \rho^*\|_{L^{\infty}}^2, \|\rho^* \|_{L^{\infty}}^2 \Big\}.
$$
Applying Gronwall's inequality, for \(0\le t\le T\), with \(e_\rho(0)=0\), we obtain 
\begin{equation}\label{erho}
\begin{aligned}
\|e_\rho(t,\cdot)\|_{L^2(\Omega)}^2
& \le
C_2(T) \int_0^T
\left(
\|e_v(t)\|_{L^2}^2
+
\|\nabla\cdot e_v(t)\|_{L^2}^2
\right)
dt.
\end{aligned}
\end{equation}
Combining (\ref{es}) and (\ref{erho})  with \eqref{LD}, we obtain
\begin{equation*}
L_S(\theta_S)
\le
K(\theta_S) \int_0^T 
(\|e_v(t,\cdot)\|^2_{L^\infty}
+\|\nabla_x e_v(t,\cdot)\|^2_{L^\infty}
+\|\nabla_x(\nabla_x\cdot e_v(t, \cdot)\|
^2_{L^\infty})dt,
\end{equation*}
where 
$$
K(\theta_S)=
3\max \left\{ \max_{0 \le t\le T}\|\rho_{\theta_S}\|_{L^1(\Omega)}, 
C_1(T) \max_{0 \le t\le T}\|\rho_{\theta_S}\|_{L^1(\Omega)}, 
|\Omega|C_2(T)\|\nabla W\|^2_{L^1(\Omega)}  \|\rho_{\theta_S}\|_{L^\infty(\Omega_T)} \right\} 
$$
It remains to show that $K(\theta_S)$ is uniformly bounded in terms of $\theta_S$. 
In fact, using the mass conservation we have 
$$
\|\rho_{\theta_S}\|_{L^1(\Omega)}=M. 
$$
On the other hand, along the particle trajectories generated by $v_{\theta_S}$, the density satisfies 
\begin{equation*}
\rho_{\theta_S}(t,X_t)
=
\rho_0(X_0)
\exp\left(
-\int_0^t
\nabla_x\cdot v_{\theta_S}(s,X_s)\,ds
\right).
\end{equation*}
Consequently,
\begin{equation*}
\|\rho_{\theta_S}(t)\|_{L^\infty(\Omega_T)}
\le
\|\rho_0\|_{L^\infty(\Omega)}
\exp\left(
T\|\nabla_x\cdot v_{\theta_S}\|_{L^\infty(\Omega_T)}
\right)
\end{equation*}
By the assumptions of Theorem \ref{Thm_total_error}, since $\|\nabla_x \cdot v_{\theta_S}\|_{L^\infty(\Omega_T)}$ is bounded, so there exists a constant $\bar C>0$ depending on $\|\rho_0\|_{L^\infty}$ and $T$, such that
\begin{equation}\label{nabla_v}
\|\rho_{\theta_S}(t)\|_{L^\infty(\Omega_T)}
\le \bar C .
\end{equation}
This implies that  
$$
K(\theta_S) \leq C_S:=  3\max \left\{M, 
MC_1(T), 
|\Omega|C_2(T)\bar C \|\nabla W\|^2_{L^1(\Omega} \right\}.
$$

Therefore, we have
\begin{equation*}
    L_S(\theta_S) \leq C_S
\int_0^T
\|v_{\theta_S}(t,\cdot)-v^*(t,\cdot)\|^2_{W^{2,\infty}(\Omega)}
\,dt.
\end{equation*}

Before proving Lemmas \ref{Lemma_MC}, we first establish two auxiliary lemmas.
\begin{lemma}[Pushforward property of the characteristic flow]\label{lem:pushforward}
Let $x^{(n)}(t)$ denote the exact solution of
\begin{equation*}
\dot{x}^{(n)}(t)
=
v_\theta(t,x^{(n)}(t)),
\qquad
x^{(n)}(t_0)=x_0^{(n)},
\end{equation*}
and define 
\begin{equation*}
s^{(n)}(t)
:=
\nabla_x\log\rho_\theta(t,x^{(n)}(t)).
\end{equation*}
If $x_0^{(1)},\ldots,x_0^{(N_p)}$ are i.i.d. samples from
$\rho_0/M$, then, for every $t_j$, the random variables 
$$x^{(1)}(t_j),\ldots,x^{(N_p)}(t_j)$$ are i.i.d. samples from
$\rho_\theta(t_j,\cdot)/M$.
\end{lemma}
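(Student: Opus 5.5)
The argument uses only the fact that the characteristic flow is a deterministic map that pushes $\rho_0$ forward to $\rho_\theta(t,\cdot)$. The steps are well-posedness of the flow, the pushforward identity, and then independence.

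First, we record that under Assumption~\ref{Ass_thm}(ii) the field $v_\theta$ is bounded, Lipschitz in $x$ (since $\nabla_x v_\theta\in L^\infty$), and continuous in $t$. By Cauchy--Lipschitz, the ODE therefore generates a unique flow map $\Phi_t:\Omega\to\Omega$ with $x^{(n)}(t)=\Phi_t(x_0^{(n)})$. We also need trajectories not to leave $\overline\Omega$. This is where the no-flux structure $v_\theta\cdot\mathbf n=0$ on $\partial\Omega$ enters (enforced, e.g., by the boundary factor as in Example~\ref{DOI}); we would state it as part of the standing setting. The map $\Phi_t$ is a bi-Lipschitz homeomorphism, measurable, and deterministic; it depends only on $\theta$ and not on the samples.

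Next, we identify the law of $\Phi_t(x_0)$ for $x_0\sim\rho_0/M$. By definition $\rho_\theta(t,\cdot)=(\Phi_t)_\#\rho_0$. To make this consistent with the continuity equation \eqref{Model_eq}, we verify the weak form: for any test function $\varphi\in C_c^\infty(\Omega)$,
\[
\frac{d}{dt}\int_\Omega \varphi(\Phi_t(x))\rho_0(x)\,dx=\int_\Omega \nabla\varphi(\Phi_t(x))\cdot v_\theta(t,\Phi_t(x))\rho_0(x)\,dx .
\]
So $(\Phi_t)_\#\rho_0$ is a weak solution of the continuity equation with data $\rho_0$. Uniqueness of weak solutions for Lipschitz velocity fields identifies it with $\rho_\theta$. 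Alternatively, the explicit characteristic formula $\rho_\theta(t,\Phi_t(x))=\rho_0(x)\exp\!\left(-\int_0^t\nabla\cdot v_\theta\,ds\right)$ together with Liouville's formula $\det\nabla\Phi_t=\exp\!\left(\int_0^t\nabla\cdot v_\theta\,ds\right)$ gives the same identity by change of variables. In either case, for every Borel set $A$,
\[
\mathbb P\bigl(\Phi_t(x_0)\in A\bigr)=\frac1M\int_{\Phi_t^{-1}(A)}\rho_0\,dx=\frac1M\int_A\rho_\theta(t,y)\,dy .
\]

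Finally, for independence, take Borel sets $A_1,\dots,A_{N_p}$. Then
\[
\mathbb P\bigl(\Phi_{t_j}(x_0^{(n)})\in A_n\ \forall n\bigr)=\mathbb P\bigl(x_0^{(n)}\in\Phi_{t_j}^{-1}(A_n)\ \forall n\bigr).
\]
By independence of the initial samples this factorizes, and by the previous step each factor equals $\frac1M\int_{A_n}\rho_\theta(t_j,\cdot)$. This is the general fact that a fixed measurable function applied componentwise to independent random variables yields independent random variables.

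The main subtlety is the second step, namely that the pushforward coincides with the density $\rho_\theta$ used elsewhere (e.g.\ in $L_S$). We would handle it with the Liouville and change-of-variables argument, which needs only $\nabla_x v_\theta\in L^\infty$. A second caveat: $\theta$ must be fixed independently of the samples. For $\theta=\theta_k$ trained on other data, this holds by the independent-batch convention stated after \eqref{L_Decomp_1}, and we would note this explicitly.
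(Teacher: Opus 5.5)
Your proposal is correct and follows essentially the same route as the paper: the characteristic flow is a fixed deterministic map that pushes $\rho_0/M$ forward to $\rho_\theta(t_j,\cdot)/M$, and applying it componentwise to independent samples preserves independence. Your additional points spell out what the paper's two-line proof leaves implicit: well-posedness of the flow and invariance of $\Omega$ under the tangential boundary condition, the Liouville/change-of-variables identification with the continuity-equation solution, the explicit factorization of joint probabilities, and the requirement that $\theta$ not depend on the samples.
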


\begin{proof}
Since $\rho_\theta$ satisfies the continuity equation 
\begin{equation*}
\partial_t\rho_\theta
+
\nabla\cdot(\rho_\theta v_\theta)
=
0,
\end{equation*}
the flow map $x_0\mapsto x^{(n)}(t_j)$ 
generated by $v_\theta$ transports the initial density 
$\rho_0/M$ to $\rho_\theta(t_j,\cdot)/M$.
Thus, each $x^{(n)}(t_j)$ has distribution $\rho_\theta(t_j, \cdot)/M$.  

Moreover,  $x^{(n)}(t_j)$ depends only on the corresponding initial sample $x_0^{(n)}$. Since the initial samples are independent, independence is preserved under the characteristic flow. 
\end{proof}

\begin{lemma}[Monte Carlo error, high-probability estimate]\label{lem:mc_error}
Let
\begin{equation*}
g_\theta(t,x)
:=
\left|
v_\theta(t,x)
+
s_\theta(t,x)
+
\nabla V(x)
+
F_\theta(t,x)
\right|^2,
\end{equation*}
where
\begin{equation*}
F_\theta(t,x)
=
\nabla_x(W*\rho_\theta)(t,x),
\end{equation*}
and define
\begin{equation*}
\Phi(t_j)
=
\int_\Omega
g_\theta(t_j,x)\rho_\theta(t_j,x)\,dx.
\end{equation*}
Under the assumptions of Theorem \ref{Thm_total_error}, $g_\theta$ is
uniformly bounded on $[0,T]\times\Omega$: there exists $G>0$ such that
$0\le g_\theta(t,x)\le G$ for all $(t,x)\in[0,T]\times\Omega$.

Suppose that  $x^{(1)}(t_j),\ldots,x^{(N_p)}(t_j)$ are i.i.d.\ samples from
$\rho_\theta(t_j,\cdot)/M$ as established in Lemma \ref{lem:pushforward}. Then, for any
$\delta\in(0,1)$, with probability at least $1-\delta/2$,
\begin{equation*}
\left|
\sum_{j=0}^{N_t-1}
\left(
\frac{M}{N_p}
\sum_{n=1}^{N_p}
g_\theta(t_j,x^{(n)}(t_j))
-
\Phi(t_j)
\right)
\Delta t
\right|
\le
MGT
\sqrt{\frac{\log(4N_t/\delta)}{2N_p}}.
\end{equation*}
In particular, the Monte Carlo quadrature error is of order $N_p^{-1/2}$, up to a logarithmic factor.

If, in addition, the interaction force is approximated by the empirical force 
\begin{equation*}
\hat F^{(n)}(t_j)
=
\frac{M}{N_p}
\sum_{m=1}^{N_p}
\nabla W
\left(
x^{(n)}(t_j)-x^{(m)}(t_j)
\right).
\end{equation*}
Accordingly, define the empirical loss integrand by
\begin{equation*}
\hat g_\theta(t_j,x^{(n)}(t_j))
:=
\left|
v_\theta(t_j,x^{(n)}(t_j))
+
s_\theta(t_j,x^{(n)}(t_j))
+
\nabla V(x^{(n)}(t_j))
+
\hat F^{(n)}(t_j)
\right|^2.
\end{equation*}
Then, for any $\delta \in (0, 1)$, with probability at least $1-\delta/2$, the force approximation satisfies 
\begin{equation*}
\max_{n, j} \left|
\hat F^{(n)}(t_j)
-
F_\theta
\left(
t_j,x^{(n)}(t_j)
\right)
\right|
\le
C_F \sqrt{\frac{\log(4N_pN_t/\delta)}{N_p}},
\end{equation*}
where $C_F>0$ depends only on $\|\nabla W\|_{L^\infty(\Omega)}$.

Consequently, with probability at least $1-\delta$,
\begin{equation*}
\left|
\sum_{j=0}^{N_t-1}
\left(
\frac{M}{N_p}
\sum_{n=1}^{N_p}
\hat g_\theta(t_j,x^{(n)}(t_j))
-
\Phi(t_j)
\right)
\Delta t
\right|
\le
C_{\mathrm{MC}}
\sqrt{\frac{\log(4N_pN_t/\delta)}{N_p}},
\end{equation*}
where $C_{\mathrm{MC}} > 0$ is independent of $N_p$ and $N_t$.
\end{lemma}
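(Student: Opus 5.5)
The plan has three parts: a uniform bound on $g_\theta$, a Hoeffding bound at each time level combined by a union bound over $j$, and a second Hoeffding plus union bound for the empirical force.

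\textbf{Uniform bound on $g_\theta$.} By Assumption~\ref{Ass_thm}(ii), $v_\theta$ is bounded, and $\nabla V$ is bounded by (i). Young's inequality gives $|F_\theta|\le \|\nabla W\|_{L^\infty}M$. For the score $s_\theta$, I would integrate the score ODE \eqref{ODE_s} along characteristics,
\[
\dot s=-(\nabla v_\theta)^\top s-\nabla(\nabla\cdot v_\theta).
\]
Gr\"onwall then yields $|s_\theta|\le (\|\nabla\log\rho_0\|_{L^\infty}+T\|\nabla(\nabla\cdot v_\theta)\|_{L^\infty})e^{T\|\nabla v_\theta\|_{L^\infty}}$. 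This requires $\rho_0$ smooth and strictly positive on $\overline\Omega$. Collecting terms gives the constant $G$.

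\textbf{Monte Carlo term.} Fix $j$. By Lemma~\ref{lem:pushforward}, the values $Y_n=g_\theta(t_j,x^{(n)}(t_j))\in[0,G]$ are i.i.d. Their mean is $\mathbb E Y_n=\Phi(t_j)/M$. Hoeffding's inequality gives
\[
P\Bigl(\Bigl|\tfrac1{N_p}\sum_n Y_n-\Phi(t_j)/M\Bigr|>\varepsilon\Bigr)\le 2e^{-2N_p\varepsilon^2/G^2}.
\]
I would set the right side equal to $\delta/(2N_t)$ and take a union bound over the $N_t$ time levels. Multiplying by $M$, summing with weights $\Delta t$, and using $N_t\Delta t=T$ then produces $MGT\sqrt{\log(4N_t/\delta)/(2N_p)}$ with probability at least $1-\delta/2$.

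\textbf{Force term.} Fix $n$ and $j$, and condition on $x^{(n)}(t_j)$. The $m\ne n$ summands $\nabla W(x^{(n)}-x^{(m)})$ are i.i.d., bounded by $\|\nabla W\|_{L^\infty}$, and have conditional mean $F_\theta(t_j,x^{(n)})/M$. The $m=n$ term and the $1/N_p$ versus $1/(N_p-1)$ normalization mismatch together contribute $O(\|\nabla W\|_\infty M/N_p)$, which is absorbed into the $\sqrt{\log(\cdot)/N_p}$ rate. I would apply vector Hoeffding componentwise, which costs a dimension factor in $C_F$. A union bound over the $N_pN_t$ pairs at level $\delta/(2N_pN_t)$ gives the stated maximum bound with probability at least $1-\delta/2$.

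\textbf{Combining.} The identity
\[
|\hat g-g|=|a+\hat F|^2-|a+F|^2=(\hat F-F)\cdot(2a+\hat F+F)
\]
shows $|\hat g-g|\le(2\sqrt G+2\|\nabla W\|_\infty M)|\hat F-F|$, where $a=v_\theta+s_\theta+\nabla V$. Averaging this bound with weights $M/N_p$ and $\Delta t$ adds $MT(\cdots)C_F\sqrt{\log(4N_pN_t/\delta)/N_p}$. On the intersection of the two events, which has probability at least $1-\delta$, the triangle inequality finishes the argument. The bound uses $\log(4N_t/\delta)\le\log(4N_pN_t/\delta)$ and defines $C_{\mathrm{MC}}$ as the sum of the two constants.

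\textbf{Main obstacle.} The delicate step is the force term, because the particles are not independent of the evaluation point; the conditioning on $x^{(n)}$ together with the exclusion of the self-term handles this. The uniform score bound is the other place where the assumptions on $\rho_0$ genuinely enter.
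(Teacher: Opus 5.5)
Your proposal is correct and follows essentially the same route as the paper: Hoeffding at each time level with a union bound over $j$ at level $\delta/(2N_t)$, conditional componentwise Hoeffding for the empirical force with a union bound over the $N_pN_t$ pairs $(n,j)$, the bound $|\hat g_\theta-g_\theta|\le C_g|\hat F^{(n)}(t_j)-F_\theta|$, and a final union bound of the two events. The differences are refinements. You derive the constant $G$ via the score ODE and Gr\"onwall, whereas the paper only asserts it. You also bound the self-interaction and normalization mismatch by $O(M\|\nabla W\|_{L^\infty}/N_p)$, which is absorbed for every $N_p$ since $\log(4N_pN_t/\delta)\ge\log 4$; the paper instead uses oddness of $\nabla W$ and takes $N_p$ large.
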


\begin{proof}
Fix $j\in \{0, \cdots, N_t-1\}$. By Lemma \ref{lem:pushforward}, the random variables $x^{(1)}(t_j),\ldots,x^{(N_p)}(t_j)$ are i.i.d.\ samples from
$\rho_\theta(t_j,\cdot)/M$. Since  $g_\theta(t_j,\cdot)\in[0,G]$, the random variables 
 $Y_n:=g_\theta(t_j,x^{(n)}(t_j))$ are i.i.d and satisfy 
 $0\leq Y_n \leq G$. Moreover, 
 $$
 \mathbb{E}[Y_n]=\Phi(t_j)/M. 
 $$
 Hence, by Hoeffding's
inequality, for any $\varepsilon>0$,
\begin{equation*}
\mathbb{P}
\left(
\left|
\frac{M}{N_p}\sum_{n=1}^{N_p}g_\theta(t_j,x^{(n)}(t_j))-\Phi(t_j)
\right|
\ge
\varepsilon
\right)
\le
2\exp\left(-\frac{2N_p\varepsilon^2}{M^2G^2}\right).
\end{equation*}
Choosing 
$$
\varepsilon= MG\sqrt{\frac{\log(4N_t/\delta)}{2N_p}}, 
$$
the right-hand side becomes $\delta/(2N_t)$. Therefore,  
by a union bound over $j=0,\ldots,N_t-1$, with
probability at least $1-\delta/2$, 
\begin{equation}\label{eq:g_bound}
\left|
\frac{M}{N_p}\sum_{n=1}^{N_p}g_\theta(t_j,x^{(n)}(t_j))-\Phi(t_j)
\right|
\le
MG\sqrt{\frac{\log(4N_t/\delta)}{2N_p}}
\end{equation}
simultaneously for all $j$. Multiplying by $\delta t$, summing over $j$, and using $N_t \Delta t=T$ gives
the first claimed estimate.

We next estimate the empirical interaction force. Fix $n$ and $j$, and condition on
$x^{(n)}(t_j)$. Since $W(x) = W(-x)$, $\nabla W$ is odd, so $\nabla W(0) = 0$; hence the
term $m = n$ in
\[
\hat F^{(n)}(t_j) = \frac{M}{N_p} \sum_{m=1}^{N_p} \nabla W\big(x^{(n)}(t_j) - x^{(m)}(t_j)\big)
\]
vanishes, and consequently
\[
\hat F^{(n)}(t_j) = \frac{N_p - 1}{N_p} \, \widetilde F^{(n)}(t_j),
\qquad
\widetilde F^{(n)}(t_j) := \frac{M}{N_p - 1} \sum_{m \neq n} \nabla W\big(x^{(n)}(t_j) - x^{(m)}(t_j)\big).
\]
For $m \neq n$, the random variables $\nabla W\big(x^{(n)}(t_j) - x^{(m)}(t_j)\big)$ are,
by Lemma \ref{lem:pushforward}, conditionally independent and identically distributed, with
conditional expectation $\frac{1}{M} F_\theta(t_j, x^{(n)}(t_j))$. Thus $\widetilde F^{(n)}(t_j)$
is conditionally unbiased for $F_\theta(t_j, x^{(n)}(t_j))$. Since $\nabla W \in W^{1,\infty}(\Omega)$,
applying Hoeffding's inequality componentwise gives
\begin{equation*}
\begin{aligned}
    \mathbb{P}\Bigg(
\big| \widetilde F^{(n)}(t_j) - F_\theta(t_j, x^{(n)}(t_j)) \big| \geq \eta
\;\Bigg|\; x^{(n)}(t_j)
\Bigg) \notag 
 \leq 2 \exp\!\big(-C_1 (N_p - 1) \eta^2 / \|\nabla W\|_{L^\infty}^2\big),
\end{aligned}
\end{equation*}
where $C_1$ is universal positive constants. A union bound over
$n = 1, \dots, N_p$ and $j = 0, \dots, N_t - 1$ then shows that, with probability at least
$1 - \delta/2$,
\[
\max_{n,j}
\big| \widetilde F^{(n)}(t_j) - F_\theta(t_j, x^{(n)}(t_j)) \big|
\leq C\, \|\nabla W\|_{L^\infty} \sqrt{\frac{\log(4 N_p N_t / \delta)}{N_p - 1}},
\]
where $C>0$ is a universal positive constant. Combining this with
\begin{equation*}
\hat F^{(n)}(t_j) - F_\theta(t_j, x^{(n)}(t_j))
= \frac{N_p - 1}{N_p}
\Big[ \widetilde F^{(n)}(t_j) - F_\theta(t_j, x^{(n)}(t_j)) \Big]
- \frac{1}{N_p} F_\theta(t_j, x^{(n)}(t_j)),
\end{equation*}
and using $\frac{N_p-1}{N_p} \in (0,1)$ together with the triangle inequality, we obtain
\begin{equation*}
\big| \hat F^{(n)}(t_j) - F_\theta(t_j, x^{(n)}(t_j)) \big|
\leq
\big| \widetilde F^{(n)}(t_j) - F_\theta(t_j, x^{(n)}(t_j)) \big|
+
\frac{1}{N_p} \big| F_\theta(t_j, x^{(n)}(t_j)) \big|.
\end{equation*}
By the boundedness $|F_\theta| \leq C_F$ from Assumption \ref{Ass_thm}, and the estimate for
$\widetilde F^{(n)}(t_j)$ established above, we obtain that, with probability at least
$1 - \delta/2$,
\begin{equation*}
\max_{n,j}
\big| \hat F^{(n)}(t_j) - F_\theta(t_j, x^{(n)}(t_j)) \big|
\leq
C \|\nabla W\|_{L^\infty} \sqrt{\frac{\log(4 N_p N_t / \delta)}{N_p - 1}}
+
\frac{C_F}{N_p}.
\end{equation*}
With fixed $N_t$, $\delta \in (0,1)$, for  $N_p$ large enough, we conclude that with probability at least
$1-\delta/2$,
\begin{equation}\label{eq:F_bound}
\max_{n,j}
\left|
\hat F^{(n)}(t_j) - F_\theta\big(t_j, x^{(n)}(t_j)\big)
\right|
\leq C_F \sqrt{\frac{\log(4 N_p N_t / \delta)}{N_p}},
\end{equation}
where $C_F>0$ depends only on $\|\nabla W\|_{L^\infty(\Omega)}$. 
By the uniform boundedness assumptions,
\begin{equation*}
\left|
\hat g_\theta(t_j,x^{(n)}(t_j))
-
g_\theta(t_j,x^{(n)}(t_j))
\right|
\le
C_g
\left|
\hat F^{(n)}(t_j)
-
F_\theta(t_j,x^{(n)}(t_j))
\right|,
\end{equation*}
where $C_g > 0$ depends only on the uniform bounds in the assumptions. Combining this estimate with \eqref{eq:F_bound}, we obtain
\begin{equation*}
\max_{n,j}
\left|
\hat g_\theta(t_j,x^{(n)}(t_j))
-
g_\theta(t_j,x^{(n)}(t_j))
\right|
\le
C_g C_F
\sqrt{\frac{\log(4N_pN_t/\delta)}{N_p}}.
\end{equation*}

By a union bound, the two
estimates above hold simultaneously with probability at least $1-\delta$. In particular, by the triangle inequality and 
using $\log(4N_t/\delta)\le\log(4N_pN_t/\delta)$, with probability at least $1-\delta$,
\begin{equation*}
\left|
\sum_{j=0}^{N_t-1}
\left(
\frac{M}{N_p}
\sum_{n=1}^{N_p}
\hat g_\theta(t_j,x^{(n)}(t_j))
-
\Phi(t_j)
\right)
\Delta t
\right|
\le
C_{\mathrm{MC}}
\sqrt{\frac{\log(4N_pN_t/\delta)}{N_p}},
\end{equation*}
where $C_{\mathrm{MC}} = MGT + MTC_g C_F$, independent of $N_p$ and $N_t$.

\end{proof}

\begin{remark}[Empirical interaction and self-interaction]
The interaction term takes the form of the empirical average $\frac{M}{N_p}\sum_{j\neq i}^{N_p}\nabla W(X_i-X_j)$,
where the self-interaction contribution $j=i$ is set to zero.
For a finite interacting particle system, the particle trajectories are
generally correlated through the empirical mean-field interaction and
therefore cannot be treated as independent samples. Quantitative
mean-field and propagation-of-chaos results provide a rigorous framework
for controlling this dependence and for justifying the convergence of
the empirical interaction force toward its mean-field counterpart
$(\nabla W * \rho)(X_i)$; see, for example,
\cite{JabinWang2016,BermanOnnheim2019}. Under the boundedness and
regularity assumptions on the interaction kernel adopted in the present
analysis, the corresponding empirical fluctuation is of the standard
order $N_p^{-1/2}$.
\end{remark}

\begin{lemma}[Euler consistency]\label{lem:euler_consistency}
Let $x_{t_j}^{(n)}, s_{t_j}^{(n)}$ be generated by the Euler scheme
\begin{align*}
& x^{(n)}_{t_{j+1}}=x^{(n)}_{t_j}+\Delta t\,v_\theta(t_j,x^{(n)}_{t_j}), \\
& s^{(n)}_{t_{j+1}}=s^{(n)}_{t_j}-\Delta t\left[(\nabla_x v_\theta(t_j,x_{t_j}^{(n)}))^\top s_{t_j}^{(n)}+\nabla_x(\nabla_x\cdot v_\theta(t_j,x_{t_j}^{(n)}))\right],
\end{align*}
with initial conditions 
$$
x_{t_0}^{(n)}=x_0^{(n)}, \quad s_{t_0}^{(n)}=\nabla_x\log\rho_0(x_0^{(n)}).
$$
Let $x^{(n)}(t_j), s^{(n)}(t_j)$ be the corresponding exact solutions, as in Lemma \ref{lem:pushforward}. Assume that $v_\theta$, $\nabla_x v_\theta$, and $\nabla_x(\nabla_x\cdot v_\theta)$ are uniformly bounded on $[0,T]\times\Omega$. Assume, in addition, that $v_\theta$ is Lipschitz continuous in $t$, uniformly in $x\in\Omega$; (i.e. $\partial_t v_\theta\in L^\infty([0,T]\times\Omega)$). We also assume the spatial regularity needed to ensure that the right-hand side of the coupled system is Lipschitz continuous in $(x,s)$. Then, for $0\le j\le N_t$,
\begin{equation*}
\bigl|x_{t_j}^{(n)}-x^{(n)}(t_j)\bigr|+\bigl|s_{t_j}^{(n)}-s^{(n)}(t_j)\bigr|\ \le\ C\Delta t,
\end{equation*}
where $C$ is independent of $\Delta t$, $N_p$ and $N_t$. 
\end{lemma}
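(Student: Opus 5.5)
The plan is to treat the pair $Y=(x,s)\in\mathbb{R}^{2d}$ as the solution of one autonomous-in-form ODE system
\[
\dot Y = \Phi(t,Y),\qquad \Phi(t,x,s):=\Bigl(v_\theta(t,x),\;-(\nabla_x v_\theta(t,x))^\top s-\nabla_x(\nabla_x\cdot v_\theta(t,x))\Bigr),
\]
whose exact trajectory is $(x^{(n)}(t),s^{(n)}(t))$. The exact score $s^{(n)}(t)=\nabla_x\log\rho_\theta(t,x^{(n)}(t))$ satisfies exactly this system by \eqref{ODE_s}. The Euler scheme in the statement is precisely $Y_{j+1}=Y_j+\Delta t\,\Phi(t_j,Y_j)$. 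The result is then the classical global error estimate for forward Euler: local truncation error of order $\Delta t^2$, propagated by a discrete Gr\"onwall argument using Lipschitz continuity of $\Phi$ in $Y$.

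\textbf{A priori bounds on the exact score.} First I bound the exact trajectory so that the constants are uniform. From \eqref{ODE_s}, $|\dot s|\le \|\nabla_x v_\theta\|_{L^\infty}|s|+\|\nabla_x(\nabla_x\cdot v_\theta)\|_{L^\infty}$. Gr\"onwall then gives
\[
|s^{(n)}(t)|\le S:=\bigl(\|\nabla\log\rho_0\|_{L^\infty}+T\|\nabla(\nabla\cdot v_\theta)\|_{L^\infty}\bigr)e^{T\|\nabla v_\theta\|_{L^\infty}}.
\]
This uses the smoothness and strict positivity of $\rho_0$. Consequently $\Phi(t,Y(t))$ is bounded along the exact trajectory, and hence $Y$ is Lipschitz in $t$.

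\textbf{Local truncation error.} I write
\[
Y(t_{j+1})-Y(t_j)-\Delta t\,\Phi(t_j,Y(t_j))=\int_{t_j}^{t_{j+1}}\bigl[\Phi(\tau,Y(\tau))-\Phi(t_j,Y(t_j))\bigr]d\tau .
\]
The integrand is bounded by $L_t|\tau-t_j|+L_Y|Y(\tau)-Y(t_j)|\le C|\tau-t_j|$, where $L_t$ is the time-Lipschitz constant of $\Phi$ and $L_Y$ its $Y$-Lipschitz constant. This requires:
\begin{itemize}
\item time Lipschitz continuity of $v_\theta$ and of its spatial derivatives $\nabla v_\theta$ and $\nabla(\nabla\cdot v_\theta)$;
\item spatial Lipschitz continuity of $v_\theta$, $\nabla v_\theta$ and $\nabla(\nabla\cdot v_\theta)$, which is the "spatial regularity" hypothesis of the lemma.
\end{itemize}
So the local error is $\tau_j\le C\Delta t^2$.

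\textbf{Discrete Gr\"onwall.} Set $e_j=|Y_j-Y(t_j)|$. Then
\[
e_{j+1}\le e_j+\Delta t\,|\Phi(t_j,Y_j)-\Phi(t_j,Y(t_j))|+C\Delta t^2\le (1+L\Delta t)e_j+C\Delta t^2 .
\]
Since $e_0=0$, iterating gives $e_j\le C\Delta t\,(e^{LT}-1)/L$, which yields the claim with a constant independent of $\Delta t$, $N_p$ and $N_t$. Every constant depends only on $T$, the uniform bounds on $v_\theta$ and its derivatives, and $\rho_0$. In particular nothing depends on the particle index, since no empirical interaction force enters this ODE.

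\textbf{Main obstacle.} The hard step is the Lipschitz bound in $s$ for the numerical trajectory. The $s$-component of $\Phi$ is linear in $s$ with coefficient $\nabla v_\theta$, so it is globally Lipschitz in $s$ with constant $\|\nabla v_\theta\|_{L^\infty}$. However, its Lipschitz constant in $x$ is $\|\nabla^2 v_\theta\|_{L^\infty}|s|+\|\nabla^2(\nabla\cdot v_\theta)\|_{L^\infty}$, which depends on $|s|$. This is harmless along the exact trajectory, where $|s|\le S$. To keep it harmless along the numerical trajectory, I will run a bootstrap: as long as $e_j\le 1$, we have $|s_j|\le S+1$, so the Lipschitz constant is uniform; the resulting bound $e_j\le C\Delta t$ then keeps $e_j\le1$ for $\Delta t$ small. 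Alternatively, I can bound $|s_j|$ directly from the linear Euler recursion, which gives $|s_j|\le S'$ with the same exponential form. A secondary point is that Euler iterates must stay in $\Omega$. I will assume this, consistent with the no-flux structure used in the numerics, or else extend $v_\theta$ smoothly outside $\Omega$.
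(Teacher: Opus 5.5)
Your proposal is correct and follows essentially the same route as the paper. The paper also stacks $z=(x,s)$ into a single system $\dot z=f(t,z)$, obtains an $O(\Delta t^2)$ local truncation error from the time and space regularity of $f$ along the exact trajectory, and closes with $|e_{j+1}|\le(1+L\Delta t)|e_j|+C_0\Delta t^2$ and the discrete Gr\"onwall inequality. Your extra step, the a priori bound on $|s|$ via the linear Euler recursion (or the bootstrap), makes the $|s|$-dependent $x$-Lipschitz constant of the $s$-equation uniform; this is a useful refinement of the paper's bare assertion that $f$ is Lipschitz in $z$ uniformly in $t$.
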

\begin{proof} For fixed $n$, write $z=(x,s)$ and formulate the coupled characteristic equations as 
$$
\dot z=f(t, z), 
$$
where 
$$
f(t, z)=(v_\theta(t, x), -(\nabla_x v_\theta(t, x))^\top s -\nabla(\nabla_x\cdot v_\theta(t, x))).
$$
By the assumed regularity and uniform bounds, $f$ is Lipschitz continuous in $z$, uniformly for $t\in [0,T]$. The assumed time regularity of $v_\theta$, together with the corresponding regularity of the coefficients in the s-equation, also ensures that $f$ is Lipschitz continuous in $t$ along the exact trajectory.  Consequently, the local truncation error of the Euler scheme is of order $O(\Delta t^2)$. More precisely, for each time step,
$$
z(t_{j+1})=z(t_j) +\Delta t f(t_j, z(t_j))+R_j, 
$$
where $|R_j| \leq C_0 (\Delta t)^2$, where $C_0$ is independent of $\Delta t$ and $N_p$. 

Define the global error by 
$$
e_j=z^{(n)}(t_j)-z^{(n)}(t_j).
$$
Since the numerical and exact solutions have the same initial data, $e_0=0$. Using the Euler scheme, the above expansion of the exact solution, and the Lipschitz continuity of $f$ in $z$, we obtain
$$
|e_{j+1}| \leq (1+L\Delta t)|e_j|+C_0(\Delta t)^2,
$$
where $L$ is a Lipschitz constant for $f$. 

Applying the discrete Gr\"{o}nwall inequality gives
$$
|e_j| \leq  C \Delta t,
$$
for all $0 \leq  j \leq  N_t$, where $C$ depends on $T$ and the uniform bounds and regularity constants entering the coupled system, but is independent of $\Delta t, N_t$, and $N_p$.

Since $e_j$ consists of the position and score errors, the desired estimate follows.
\end{proof}

Proof of Lemma \ref{Lemma_MC}.

For the analysis, we introduce the intermediate particle loss $\hat L_S$, evaluated along the exact characteristic trajectories. This quantity provides a bridge between the population loss $L_S$ and the fully discrete loss $L_D$. It is defined by
\begin{equation*}
\hat L_S(\theta)
=
\frac{M}{N_p}
\sum_{j=0}^{N_t-1}
\sum_{n=1}^{N_p}
\left|
v_\theta(t_j,x^{(n)}(t_j))
+
s^{(n)}(t_j)
+
\nabla V(x^{(n)}(t_j))
+
\hat F^{(n)}(t_j)
\right|^2
\Delta t,
\end{equation*}
where
\begin{equation*}
\hat F^{(n)}(t_j)
=
\frac{M}{N_p}
\sum_{m=1}^{N_p}
\nabla W\bigl(
x^{(n)}(t_j)-x^{(m)}(t_j)
\bigr).
\end{equation*}
Here, $x^{(n)}(t_j)$ and $s^{(n)}(t_j)$ are evaluated along the exact characteristic trajectories.\\

Recall the decomposition in \eqref{L_Decomp_1}. For any fixed $\theta$, we decompose the discrepancy between the population loss and the fully discrete loss through the intermediate loss $\hat L_S$ as  
\begin{equation*}
    L_S(\theta) - L_D(\theta) = \Big[L_S(\theta) - \hat L_S (\theta)\Big] + \Big[\hat L_S(\theta) - L_D (\theta)\Big].
\end{equation*}
Substituting this decomposition into
\eqref{L_Decomp_1} gives 
\begin{equation}\label{L_Decomp}
\begin{aligned}
L_S(\theta_k) & \leq
L_S(\theta_S)
+
\Bigl[
L_D(\theta_k)-L_D(\theta_D)
\Bigr]
 \\
&\quad +
\Big[L_S(\theta_k) - \hat L_S (\theta_k)\Big] + \Big[\hat{L}_S(\theta_k) - L_D (\theta_k)\Big]\\
&\quad + 
\Big[\hat L_S(\theta_S) - L_S (\theta_S)\Big] + \Big[L_D (\theta_S) - \hat{L}_S(\theta_S)\Big].
\end{aligned}
\end{equation}
Thus, the total error is decomposed into the optimization error, the sampling error, and the time-discretization error.
We next estimate the two approximation terms.

For any fixed $\theta$, define
the time-discrete population loss 
\begin{equation*}
\hat L_D(\theta)
=
\sum_{j=0}^{N_t-1}
\int_\Omega
\left|
v_\theta(t_j,x)
+s_\theta(t_j,x)
+\nabla V(x)
+F_\theta(t_j,x)
\right|^2
\rho_\theta(t_j,x)\,dx\,\Delta t,
\end{equation*}
where
\begin{equation*}
F_\theta(t_j,x)
=
\nabla_x(W * \rho_\theta)(t_j,x).
\end{equation*}
Here, $\hat L_D(\theta)$ retains the exact spatial expectation with respect to $\rho_\theta(t_j, \cdot)$, while replacing the time integral by the left-point rectangle rule.

We first write 
\begin{equation*}
L_S(\theta)-\hat L_S(\theta)
=
\bigl[L_S(\theta)-\hat L_D(\theta)\bigr]
+
\bigl[\hat L_D(\theta)-\hat L_S(\theta)\bigr].
\end{equation*}

The first term is the time discretization error, arising from the
rectangle rule approximating
$\int_0^T \Phi(t)\,dt$ by
$\sum_{j=0}^{N_t-1}\Phi(t_j)\Delta t$.
Under the assumptions of Theorem \ref{Thm_total_error}, the integrand is sufficiently regular in time, and the standard rectangle-rule estimate yields
\begin{equation}\label{eq:time_discretization}
\left|
L_S(\theta)-\hat L_D(\theta)
\right|
\le C_D\Delta t.
\end{equation}

The second term is the Monte Carlo approximation error.
By Lemma \ref{lem:pushforward}, the exact-flow particles
$x^{(n)}(t_j)$ are i.i.d. samples from
$\rho_\theta(t_j,\cdot)/M$. Applying Lemma \ref{lem:mc_error} to both the outer spatial
expectation and the interaction-force approximation, we obtain with probability at least
$1-\delta$,
\begin{equation}\label{eq:MC_error}
\left|
\hat L_D(\theta)-\hat L_S(\theta)
\right|
\le C_{\mathrm{MC}}\sqrt{\frac{\log(N_pN_t/\delta)}{N_p}},
\end{equation}
where $C_{\mathrm{MC}}$ is the constant given in Lemma \ref{lem:mc_error}.
Combining \eqref{eq:time_discretization} and
\eqref{eq:MC_error}, we obtain
\begin{equation}\label{eq:LS_hatLS_error}
\left|
L_S(\theta)-\hat L_S(\theta)
\right|
\le
C_{\mathrm{MC}}\sqrt{\frac{\log(N_pN_t/\delta)}{N_p}}
+
C_D\Delta t.
\end{equation}
We next estimate the discrepancy between the exact-flow particle loss and the fully
discrete loss. The latter is evaluated along the Euler approximations $x^{(n)}_{t_j}$ and
$s^{(n)}_{t_j}$, rather than the exact characteristic trajectories $x^{(n)}(t_j)$ and
$s^{(n)}(t_j)$. By Lemma \ref{lem:euler_consistency}, these errors are of order
$O(\Delta t)$, uniform in $n$ and $j$. Under the assumed uniform regularity and
boundedness of the loss integrand, this implies
\begin{equation}\label{eq:euler_error}
\left|
\hat L_S(\theta)-L_D(\theta)
\right|
\le C_E\Delta t,
\end{equation}
where $C_E$ is independent of $N_p$ and $\Delta t$.
We now apply these estimates to $\theta_k$ and $\theta_S$. Since $\theta_k$ is evaluated
on a batch independent of the training data, the preceding high-probability estimates
apply to $\theta_k$ in the same manner as to the fixed parameter $\theta_S$. Using a union
bound, we may therefore choose the confidence levels so that, with probability at least
$1 - \delta$, all four approximation terms appearing in the decomposition above satisfy
the corresponding estimates simultaneously.
Hence, by the triangle inequality
\begin{equation*}
\begin{aligned}
&
\bigl[L_S(\theta_k)-\hat L_S(\theta_k)\bigr]
+\bigl[\hat L_S(\theta_k)-L_D(\theta_k)\bigr]
+ \Big[\hat L_S(\theta_S) - L_S (\theta_S)\Big]
+ \Big[L_D (\theta_S) - \hat{L}_S(\theta_S)\Big]
\\
&\qquad\le
2 C_{\mathrm{MC}}\sqrt{\frac{\log(N_pN_t/\delta)}{N_p}}
+
2(C_D+C_E)\Delta t
\\
&\qquad\le
 C_{\mathrm{MD}}
\left(
\sqrt{\frac{\log(N_pN_t/\delta)}{N_p}}+\Delta t
\right),
\end{aligned}
\end{equation*}
where
\begin{equation*}
 C_{\mathrm{MD}}
=
2\max\left\{
 C_{\mathrm{MC}},\,C_D+C_E
\right\}.
\end{equation*}
The constant $C_{\mathrm{MD}}$ is independent of $N_p$ and $\Delta t$.
This completes the proof.

\section{Multi-stage Training Strategy}\label{AppendixB}
\subsection{Multi-stage training for long-time simulations}

For long-time simulations, training a single neural network over the entire time interval $[0, T]$ can become increasingly difficult and may lead to reduced efficiency and accuracy. To address this issue, we adopt a multi-stage training strategy following \cite{zhou2025}. Specifically, we partition $[0,T]$ into $N_{\mathrm{stage}}$ consecutive subintervals,
$$
0=T_0<T_1<\cdots<T_{N_{\mathrm{stage}}}=T.
$$
At stage $k$, the neural velocity field is trained only on the subinterval $[T_k,T_{k+1}]$. The particle positions and auxiliary variables obtained at time $T_k$ are used as the initial data for this stage. Once the network is trained, the resulting flow is propagated from
$T_k$ to $T_{k+1}$, and the resulting particle states and auxiliary variables are then used to initialize the next stage.

To further improve optimization efficiency, the network parameters at each stage are initialized using the optimized parameters from the preceding stage. This strategy provides a natural continuation of the learned velocity field across successive time intervals and reduces the computational cost of retraining from scratch.

Overall, the multi-stage strategy decomposes the long-time simulation into a sequence of shorter, more manageable learning problems while maintaining continuity of the particle evolution. This improves the robustness and efficiency of the proposed method for long-time simulations. The complete procedure is summarized in Algorithm~\ref{alg:multi_stage}.

\subsection{Multi-stage Algorithm}
\leavevmode
\vskip -1em
\begin{breakablealgorithm}
\caption{Generalized score-based neural transport for long time horizon}
\label{alg:multi_stage}
\begin{algorithmic}[1]

\Require Time horizon $T$, number of stages $N_{\mathrm{stage}}$, time steps $N_t$, particle number $N_p$, initial density $\rho_0$, learning rate, optimization steps per stage $N_{\mathrm{step}}$

\State Divide the time interval $[0,T]$ into $N_{\mathrm{stage}}$ stages:
\State $\displaystyle 0=T_0<T_1<\cdots<T_{N_{\mathrm{stage}}}=T$

\State Initialize parameters $\theta$ of $v_\theta$

\For{$k=0,1,\cdots,N_{\mathrm{stage}}-1$}

\State Set the current training interval $[T_k,T_{k+1}]$

\For{$\mathrm{step}=1,2,\cdots,N_{\mathrm{step}}$}

\State Sample particles $\{x_{T_k}^{(n)}\}_{n=1}^{N_p}$ from the density at time $T_k$

\State Initialize density variables $\rho_{T_k}^{(n)}$ and auxiliary variables $q_{T_k}^{(n)}$

\State Set $\mathrm{Loss}=0$

\For{$t_j\in [T_k,T_{k+1})$}

\State Evaluate the neural velocity $v_\theta(t_j,x_{t_j}^{(n)})$

\State Compute the interaction force

\State $\displaystyle
F_{t_j}^{(n)}
=
\frac{M}{N_p}
\sum_{m=1}^{N_p}
\nabla W(x_{t_j}^{(n)}-x_{t_j}^{(m)})
$

\State Accumulate objective

\State $\displaystyle
\mathrm{Loss}
=
\mathrm{Loss}
+
\frac{M}{N_p}
\sum_{n=1}^{N_p}
\left|
v_\theta(t_j,x_{t_j}^{(n)})
+
q_{t_j}^{(n)}
+
\nabla V(x_{t_j}^{(n)})
+
F_{t_j}^{(n)}
\right|^2
\Delta t
$

\State Update particle position

\State $\displaystyle
x_{t_{j+1}}^{(n)}
=
x_{t_j}^{(n)}
+
\Delta t\,
v_\theta(t_j,x_{t_j}^{(n)})
$

\State Update density variable

\State $\displaystyle
\rho_{t_{j+1}}^{(n)}
=
\rho_{t_j}^{(n)}
-
\Delta t\,
\rho_{t_j}^{(n)}
\nabla_x\cdot
v_\theta(t_j,x_{t_j}^{(n)})
$

\State Update auxiliary variable

\State $\displaystyle
\begin{aligned}
q_{t_{j+1}}^{(n)}
={}&
q_{t_j}^{(n)}
-\Delta t
\Bigg[
(\nabla_x v_\theta(t_j,x_{t_j}^{(n)}))^\top q_{t_j}^{(n)}
+\rho_{t_j}^{(n)}H''(\rho_{t_j}^{(n)})
\nabla_x(\nabla_x\cdot v_\theta(t_j,x_{t_j}^{(n)}))
\\
&+
(\nabla_x\cdot v_\theta(t_j,x_{t_j}^{(n)}))
\left(
1+
\frac{\rho_{t_j}^{(n)}H'''(\rho_{t_j}^{(n)})}
{H''(\rho_{t_j}^{(n)})}
\right)
q_{t_j}^{(n)}
\Bigg].
\end{aligned}
$

\EndFor

\State Update $\theta$ with Adam to minimize $\mathrm{Loss}$

\EndFor

\State Use the trained flow on $[T_k,T_{k+1}]$ to propagate particles to $T_{k+1}$

\State Set the propagated particles, density variables, and auxiliary
variables as the initial data for the next stage, and warm-start the
network with the optimized parameters from the current stage

\EndFor

\end{algorithmic}
\end{breakablealgorithm}

\noindent {\bf Implementation details.} 
\begin{itemize}
\item {\bf Architecture.}  Fully connected neural network, 4 hidden
layers, 128 neurons per layer, $\tanh$ activation.
\item {\bf Optimization:} Adam optimizer ($lr=10^{-3}$, $N_p=512$ particles/patch, $2000$ iterations/stage,no
early stopping).
\item {\bf Continuation:}  Successive stages $[T_k,T_{k+1}]$ are warm-started by continuing both network parameters and states variables (particles, density, generalized score fields) at $T_{k+1}$.  
\end{itemize}

\end{document}